\documentclass{article}
\usepackage[utf8]{inputenc}

\usepackage{geometry}
\usepackage{mathtools,amsfonts,amsthm,mathrsfs,amssymb,stmaryrd,bbm}
\usepackage[dvipsnames]{xcolor}
\usepackage{tikz}
\usepackage{standalone}
\usepackage[justification=centering]{caption}
\usepackage[colorlinks=true, citecolor=cyan]{hyperref}
\usepackage{comment}

\newtheorem{thm}{Theorem}
\newtheorem{lemma}[thm]{Lemma}

\newtheorem{prop}[thm]{Proposition}

\newtheorem{defin}{Definition}
\newtheorem{rem}{Remark}

\newcommand{\kT}{\mathfrak{T}}

\newcommand{\kB}{\mathfrak{B}}

\newcommand{\Tvec}{\mathcal{T}}
\newcommand{\Fvec}{\mathcal{F}}
\newcommand{\Lvec}{\mathcal{L}}
\newcommand{\Mvec}{\mathcal{M}}
\newcommand{\Rvec}{\mathcal{R}}
\newcommand{\parts}{\mathcal{P}}

\newcommand{\vsum}{\ominus}
\newcommand{\lsum}{\oslash}
\newcommand{\rsum}{\obslash}
\newcommand{\usum}{\odot}
\newcommand{\intval}[1]{\llbracket #1 \rrbracket}

\newcommand{\BigO}{\mathcal{O}}

\usepackage{geometry}
\usepackage{todonotes}

\newcommand{\PT}[1]{\todo[inline,backgroundcolor=red!20!white]{\scriptsize Paul: #1}}

\title{Stretched exponential asymptotics for bases of triangular bootstrap percolation }
\author{Andrew Elvey Price \and Juliette Schabanel \and Paul Th\'evenin }
\date{}

\begin{document}

\maketitle

\begin{abstract}
In this paper, we study a bootstrap percolation process on the finite triangular grid $\kT_n$ of side length $n$. We say that a subset $\eta$ of points in $\kT_n$ percolates if the final configuration, starting from $\eta$, is the whole grid $\kT_n$.
A basis of size $n$ is then a subset of points of $\kT_n$ of minimum cardinality which percolates. In this paper, we first prove that the generating function counting bases satisfies an algebraic differential equation. Then, by analysing a modified version of this equation, we prove that the number $t_n$ of bases of size $n$ exhibits a stretched exponential asymptotic behaviour. More precisely, we show that $t_n \sim c n!e^{\sqrt{12n}}n^{5/12}$, for some constant $c>0$. These bases were recently shown by the second author to be in bijection with $3$-permutations avoiding the patterns $(12, 12)$ and $(231, 312)$, so this represents to our knowledge the first proven example of an asymptotic stretched exponential appearing in the study of pattern avoiding permutations.
\end{abstract}

\section{Introduction}

Fix $n \geqslant 0$ and denote by $\mathfrak{T}_n$ the set of $n(n+1)/2$ points of the triangular grid of size $n$ (Figure~\ref{fig:intro}, left). We consider the following dynamical process on the set of configurations $\parts(\mathfrak{T}_n)$. Start from an initial configuration $\eta_0$. For each $s \geqslant 1$, define $\eta_{s+1}$ as follows: for any $x \in \mathfrak{T}_n$, $x\in\eta_{s+1}$ if and only if either $x \in \eta_s$, or there exists a size $2$ upward triangle  $\{x,y,z\}$ in $\mathfrak{T}_n$ such that $y, z \in \eta_s$ (see Figure~\ref{fig:filling process}). Define $\zeta\coloneqq\inf \{s \geqslant 0 \mid \eta_{s+1}=\eta_s\}$, the number of steps needed to reach the final configuration. We say that $\eta_0$ is a basis of $\kT_n$ if it contains exactly $n$ points and $\eta_{\zeta} = \kT_n$. See Figure~\ref{fig:intro} (right) for an example of a basis of size $5$.

\begin{figure}[ht]
    \centering
    \includegraphics[page=1, height=2.5cm]{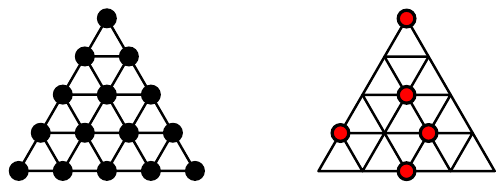}
    \caption{The set $\mathfrak{T}_5$, and a basis of size 5.}
    \label{fig:intro}
\end{figure}

These bases appear in \cite{SS23,SS25}, with connections to tilings and cellular automata theory. An analogous model on the infinite square lattice, known as Froböse percolation, has also been studied \cite{Fro89, HT24, SS25}.

Denote by $t_n$ the number of bases of $\kT_n$. Our main result is the asymptotic behaviour of the sequence $(t_n)_{n \geqslant 0}$.

\begin{thm}
\label{thm:asymptotic_behaviour}
There exists a constant $c>0$ such that, as $n \rightarrow \infty$:
\begin{align*}
t_n \sim c n! e^{\sqrt{12n}} n^{5/12}.
\end{align*}
\end{thm}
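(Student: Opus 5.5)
The plan is to pass from the combinatorics of bases to an analytic equation for a generating function, and then extract the asymptotics by a saddle-point type analysis of a suitable rescaled sequence. The structural step is a recursive decomposition of a basis of $\kT_n$. The key observation is that in a basis the final configuration is reached through a hierarchy of merging triangles. Two filled triangles of sizes $a$ and $b$ at suitable relative positions span a filled triangle of size $a+b$, where sizes are measured so that a triangle of size $k$ needs exactly $k$ initial points. Hence a basis of $\kT_n$ decomposes into smaller bases of sub-triangles, glued by one of a few combination operations. These operations are distinguished by whether the pieces sit along the left, right, or bottom side, and are what the macros $\vsum,\lsum,\rsum,\usum$ denote.

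Translating these gluing rules into generating functions should give an equation for $T(x)=\sum_n t_n x^n/n!$, or for the ordinary series, in which the binomial choices of relative position become products and derivatives. Because the number of possible positions for a piece grows linearly with its size, the resulting equation is an algebraic differential equation rather than an algebraic one. The ordinary generating function has radius $0$, since $t_n$ grows like $n!$, so I would work with $u_n=t_n/n!$.

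The analytic step is to derive from the differential equation a recurrence for $u_n$ of the form
\[
u_n = u_{n-1}\Bigl(1+\sqrt{3/n}+\frac{\alpha}{n}+\BigO(n^{-3/2})\Bigr)+(\text{lower order convolution terms}).
\]
A ratio $u_n/u_{n-1}=1+\sqrt{3/n}+\alpha/n+\dots$ telescopes to $\exp(\sqrt{12n}+\alpha\log n+C)$, because $\sum_{k\le n} \sqrt{3/k}\sim 2\sqrt{3n}=\sqrt{12n}$. The exponent $5/12$ would then come from $\alpha$ together with the second-order correction $-\tfrac12\cdot 3/k$ arising from exponentiating the square-root term. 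Concretely, I would replace the true equation by a modified, linearised one in which the nonlinear convolution terms are either dropped or bounded, since those terms involve $u_ku_{n-k}$ and are negligible relative to $u_{n-1}$ given the $n!$ normalisation. I would then show that the modified sequence has the claimed asymptotics, either by an explicit Laplace/saddle-point analysis of a solvable model (for instance a Borel-type transform with an essential singularity $e^{3/(1-z)}$-like behaviour giving stretched exponentials) or directly via an ansatz $u_n = e^{\sqrt{12n}}n^{\beta}(c+\BigO(n^{-1/2}))$. The ansatz would be substituted into the recurrence to fix $\beta=5/12$.

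The main obstacle is rigorous control: I have to show that the true sequence is squeezed between sub- and supersolutions of the modified recurrence whose ratios agree up to a factor $1+\BigO(n^{-3/2})$. The errors are summable, so the product converges and the constant $c>0$ exists. I would try constructing explicit bounding sequences $v_n^{\pm}=e^{\sqrt{12n}}n^{5/12}(1\pm K n^{-1/2})$. Induction would then show $v_n^-\lesssim u_n\lesssim v_n^+$, using the a priori growth bound to absorb the convolution terms. Finally, I would upgrade to a true limit by showing that $u_n/(e^{\sqrt{12n}}n^{5/12})$ is eventually monotone up to a summable error.
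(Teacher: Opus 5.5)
The combinatorial half is in the spirit of the paper, which also uses shifted sums and makes the cut canonical via indivisible pieces and inclusion--exclusion over the three directions. The analytic half has a genuine gap, starting with your claim that the equation yields a recurrence $u_n=u_{n-1}\bigl(1+\sqrt{3/n}+\alpha/n+\BigO(n^{-3/2})\bigr)+(\text{lower order})$. It does not.

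\textbf{The recurrence has no such first-order form.} In $T=1+z+3I(\Delta T-1)-3I^2\Delta^2T+I^3\Delta^3T$, the size-$1$ pieces of $I$ alone contribute $3u_{n-1}-3u_{n-2}+u_{n-3}$ to $u_n=t_n/n!$, up to $\BigO(u/n)$. Pieces of sizes $2$ and $3$ contribute at relative orders $n^{-1}$ and $n^{-2}$. So these small-$k$ ``convolution'' terms are the main terms, not negligible ones.

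Keeping them, i.e.\ replacing $I$ by $z+z^2+5z^3$, gives an order-$9$ linear recurrence whose normalised coefficients tend to $(3,-3,1,0,\dots,0)$. Its leading characteristic polynomial therefore has a triple root at $1$. The ratio $1+\sqrt{3/n}+\frac{23}{12n}$ cannot be read off as a coefficient. It is an emergent eigenvalue of the slowly varying companion matrix $\mathcal{M}_n$, produced by the splitting of that triple root. It competes with modes of ratio $1+\frac{2}{3n}$ and $1-\sqrt{3/n}+\frac{23}{12n}$.

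The paper separates these modes by constructing approximate left eigenvectors $\mathcal{R}^{(j)}_n$, with entries polynomial in $n^{-1/2}$, satisfying $|\mathcal{R}^{(j)}_{n+1}\mathcal{M}_n-\alpha^{(j)}_n\mathcal{R}^{(j)}_n|=\BigO(n^{-5/2})$.

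\textbf{The remainder bound is too weak.} The same picture shows why ``negligible relative to $u_{n-1}$'' does not suffice. A perturbation of relative size $n^{-2}$ can already shift the $1/n$ term of the dominant ratio, and with it the exponent $5/12$. You need relative error $\BigO(n^{-2-\delta})$. The paper gets this from $3$-cores: $3$-irreducible bases satisfy $f_n\le c\,t_{n-3}$ (Lemma~\ref{lem_boundFT}), and smaller $k$ does not work.

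\textbf{The squeeze argument cannot be run.} With several modes whose ratios all tend to $1$, your bounds $v_n^\pm=e^{\sqrt{12n}}n^{5/12}(1\pm Kn^{-1/2})$ cannot be propagated by induction. A comparison principle needs a recurrence that is monotone in its arguments. Here the coefficient of $u_{n-2}$ is about $-3$, so an upper bound on $u_{n-2}$ feeds a lower bound on $u_n$.

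\textbf{The polynomial case is never excluded.} Nothing in your plan rules out that $t_n/n!$ sits on a subdominant mode and grows only polynomially. The a priori bounds $3\,n!\le t_n\le a\,b^n\,n!$ cannot decide this, and substituting an ansatz only produces a formal solution.

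The paper resolves this in two steps. First, Theorem~\ref{thm:dichotomy_general} is applied with $x^{(j)}_n=\mathcal{R}^{(j)}_n\mathcal{T}_n$ and the positive combination $y_n=x^{(0)}_n-2x^{(1)}_n+x^{(2)}_n$. It shows that $\widehat t_n$ either grows at most polynomially or satisfies $\widehat t_n\sim c\,e^{\sqrt{12n}}n^{17/12}$.

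Second, the polynomial case is excluded by a combinatorial lower bound. Let $\ell_n$ be the number of bases whose $3$-core has a fixed large size $N$. This sequence satisfies the homogeneous recurrence exactly. Its initial data give $x^{(0)}_{N-8}=x^{(1)}_{N-8}=x^{(2)}_{N-8}=\widehat\ell_N$, which forces the stretched exponential case.
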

Our empirical analysis of the exact terms $t_{n}$ for $n\leqslant 500$ suggests that $c\approx0.00098107546$ (see Section \ref{subsec:triangle_empirical}).

\paragraph{Stretched exponential in asymptotics.}
The sequence $(t_n/n!)_{n \geqslant 0}$ is said to exhibit a \emph{stretched exponential} behaviour, in the sense that $\log \left( t_n/n! \right) \sim c n^q$, for some $c \neq 0$ and $0<q<1$. 
It is notoriously hard to rigorously prove that a sequence exhibits a stretched exponential behaviour, but there are some precedents. For instance, Revelle \cite{Rev03} showed that the cogrowth sequence of the lamplighter group behaves like $c \, 9^n \kappa^{n^{1/3}} n^{1/6}$, for some $c,\kappa >0$. Recently, the first author, with others, proved that the sequence enumerating so-called compacted binary trees, exhibits a stretched exponential behaviour with a factor $e^{c n^{1/3}}$ appearing in the enumeration \cite{EPFW21}. Their method was later adapted to show that families of automata \cite{EPFW20}, Young tableaux \cite{BW21} and phylogenetic networks \cite{FYZ21} also exhibit a stretched exponential factor in their enumeration.

Stretched exponentials have been conjectured to appear in cogrowth sequences of other groups \cite{EPG19}, and in other contexts, including pattern avoiding permutations \cite{CG15,CGZJ18}. 

\paragraph{Pattern avoiding 3-permutations.}

Bases of $\kT_n$ were recently shown by the second author \cite{BijPermut} to be in bijection with a family of pattern avoiding $3$-permutations, namely the class $Av((12,12), (231,312))$, following a conjecture of \cite{Bonichon_d-permutations}. Therefore our results apply equally to this class. 

A 3-permutation of length $n$ is a pair of permutations $(\sigma_{1},\sigma_{2})$ of $\intval{1, n}\coloneqq\{1,2,\ldots,n\}$. These can be naturally associated with the set of points $\{(i,\sigma_{1}(i),\sigma_{2}(i))\}_{i\in\intval{1,n}}$ in 3-dimensional space. A 3-permutation $(\sigma_{1},\sigma_{2})$ of length $n$ is said to contain a 3-permutation $(\pi_{1},\pi_{2})$ of length $m\leqslant n$ as a pattern if there are indices $1\leqslant i_{1}<\cdots<i_{m}\leqslant n$ such that $(\sigma_{1}(i_k))_{k\in\intval{1, m}}$ (resp. $(\sigma_{2}(i_k))_{k\in\intval{1, m}}$) has the same relative order as $(\pi_{1}(k))_{k\in\intval{1, m}}$ (resp. $(\pi_{2}(k))_{k\in\intval{1, m}}$). Otherwise $(\sigma_1, \sigma_2)$ {\em avoids} $(\pi_1, \pi_2)$. The set of 3-permutations avoiding $(\pi_{1},\pi_{2})$ is denoted $Av((\pi_{1},\pi_{2}))$, while $Av_{n}((\pi_{1},\pi_{2}))$ denotes those of length $n$.

For 3-permutations avoiding a given single pattern $(\pi_{1},\pi_{2})$ of length at least $2$, the number $|Av_{n}((\pi_{1},\pi_{2}))|$ is known to grow like $(n!)^{3/2}$ multiplied by a factor of at most exponential growth \cite[Theorem 3.4]{Gunby_d-permutations}. Obtaining exact results, however, or even more precise asymptotics for 3-permutations avoiding a single pattern, seems to be a very difficult problem. 

For pattern avoiding permutations avoiding multiple patterns, the first non-trivial classes are those avoiding one pattern of size 2 and one pattern of size 3. As pointed out in \cite[Proposition 6]{aldred2005permuting}, there are three such classes up to symmetries. The first class, $Av((12,12), (132, 312))$, was already enumerated exactly in \cite{ALW1995}:
\[|Av_{n}((12,12), (132, 312))|=(n+1)^{n-1}.\]
For the second class, $Av((12,12), (123, 321))$, the counting sequence remains unknown. The third class,  $Av((12,12), (231,312))$, is the one that we address in this article. In particular, Theorem \ref{thm:asymptotic_behaviour} can be restated as
\[|Av_{n}((12,12), (231,312))|\sim c n! e^{\sqrt{12n}} n^{5/12},\]
as $n \rightarrow \infty$, for some $c>0$.
To our knowledge, this is the first proven example of an asymptotic stretched exponential appearing in the study of pattern avoiding permutations. A stretched exponential is conjectured to appear in the asymptotics of other pattern avoiding classes such as $1324$-avoiding (classical) permutations \cite{CGZJ18}.

\hfill 

\paragraph{A D-algebraic equation.}
As a first step in the proof of Theorem \ref{thm:asymptotic_behaviour}, we derive an algebraic differential equation characterising the generating function, $T(z)$, of bases. We obtain this equation by applying an inclusion-exclusion argument to a decomposition from \cite{BijPermut}.
\begin{thm}
\label{thm:D-algebraic}
    Denoting $\Delta T(z) = (zT(z))'$, there is a series $I(z)$ such that $(T, I)$ is the unique pair of series satisfying the equations
    \begin{equation*}
        T(z) = 1 + I(z)\Delta T(z)
    \end{equation*}
    and 
    \begin{equation*}
        T(z) = 1 + z + 3I(z)(\Delta T(z)-1)-3I(z)^2\Delta^2T(z) + I(z)^3\Delta^3T(z).
    \end{equation*}
\end{thm}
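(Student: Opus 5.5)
The plan is to read both equations combinatorially off the decomposition of bases from \cite{BijPermut}, and then to prove uniqueness separately by extracting coefficients.

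\textbf{Combinatorial meaning of the series.} I will use the fact (from \cite{BijPermut}) that a basis of $\kT_n$ can be built from bases of smaller triangles through three ``sum'' operations, one for each direction of the grid. These are the operations $\vsum,\lsum,\rsum$, and $\usum$ is presumably their common form. In each operation, a smaller basis is inserted at one of the $m+1$ boundary positions of a basis of $\kT_m$. Marking one of these $m+1$ positions multiplies the coefficient of $z^m$ by $m+1$, so the marked objects have generating function $\Delta T(z) = (zT(z))'$.

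I then define $I(z)$ as the generating function of the \emph{indecomposable pieces}, i.e.\ the blocks that cannot be split further along one fixed direction. The first step is to prove a unique factorisation lemma: every nonempty basis is obtained in exactly one way as an indecomposable block glued at a marked position of another basis. This gives $T = 1 + I\,\Delta T$, and it also shows that $I$ is well defined with $I(z) = z + O(z^2)$.

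\textbf{The second equation by inclusion--exclusion.} Let $A_1, A_2, A_3$ be the sets of nonempty bases that are decomposable in the first, second and third direction respectively.
\begin{itemize}
\item The empty basis and the one-point basis of $\kT_1$ are the only bases lying in none of the $A_i$. They contribute $1+z$, and every larger basis lies in at least one $A_i$.
\item Each $|A_i|$ is counted by $I(\Delta T - 1)$: a nontrivial split consists of an indecomposable block attached at a marked position of a \emph{nonempty} remainder.
\item Each pairwise intersection $A_i \cap A_j$ is counted by $I^2\Delta^2 T$: peeling off one block in each of the two directions gives a basis with two marked positions. The key claim is that the two peelings commute and involve disjoint blocks.
\item The triple intersection is counted by $I^3\Delta^3 T$ in the same way.
\end{itemize}
The signs $+3,-3,+1$ are then exactly inclusion--exclusion.

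The main obstacle is justifying the intersection counts, i.e.\ showing that decompositions in different directions are compatible. One has to check geometrically that once a block is split off in one direction, the remaining basis is still decomposable in the other directions in the same way, and that $\Delta^k T$ correctly encodes the $k$ independent markings. I would first verify this on small $n$ (up to $4$ or $5$) by direct enumeration. I would then prove it using the description of a sum as a placement of sub-triangles in the corners of $\kT_n$: independent corners for different directions should give a product structure.

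\textbf{Uniqueness.} From the first equation, $I = (T-1)/\Delta T$, which is a well-defined power series with $I = z + O(z^2)$. Substituting this into the second equation gives a single equation in $T$ alone. I will show that for each $n \geqslant 2$, extracting $[z^n]$ expresses $t_n$ in terms of $t_1, \dots, t_{n-1}$. To do this, compute the coefficient of $t_n$ contributed by each term, using $[z^n]\Delta T = (n+1)t_n + \dots$ and the linear-in-$t_n$ part of $I$, and check that the total coefficient of $t_n$ is nonzero. Because $I = O(z)$, the terms $I^2$ and $I^3$ shift degrees and contribute $t_n$ only through low-order factors, so this is a short explicit computation. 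Together with $t_0 = t_1 = 1$, this determines $T$, and then $I$, uniquely.
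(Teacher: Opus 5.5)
Your outline has the same architecture as the paper's proof: a canonical cut whose second part is indivisible in the direction of the cut, the marking identity giving $\Delta F\cdot G$, and inclusion--exclusion over the three directions. But the two facts that carry all the content are left unproved: the unique factorisation lemma, and the counts $I^2\Delta^2T$ and $I^3\Delta^3T$ for bases cut in several directions. What you propose for them (enumeration for small $n$, ``independent corners should give a product structure'') is not an argument. The corner picture is also misleading. In $\beta_1\vsum_h\beta_2$ it is the first part that sits in a corner, while the peeled indivisible block $\beta_2$ lies along the opposite side at a variable shift.

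The missing idea is that subbases are closed under intersection (the paper's Lemma~\ref{lem:intersection}). It rests on one observation: in a basis $\beta$ of $\kT_n$, the upward triangle through which each point of $\kT_n\setminus\beta$ is added does not depend on the percolation order. Indeed, triangles and added points are in bijection, and one inducts along a fixed order. Hence for subbases $\beta_1,\beta_2$, each point of the smallest triangle containing $\beta_1\cap\beta_2$ is added through the same triangle in both, so $\beta_1\cap\beta_2$ percolates. Consequently, if $\beta=\beta_1\usum_h\beta_2$, every subbasis $\gamma$ splits as $(\beta_1\cap\gamma)\usum_{h'}(\beta_2\cap\gamma)$.

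Unique factorisation then follows at once. If $\beta\vsum_h\eta=\beta'\vsum_{h'}\eta'$ with $|\beta|<|\beta'|$, then $\eta=(\beta'\cap\eta)\vsum_s\eta'$ is a genuine horizontal cut of $\eta$, contradicting indivisibility. Moreover $h=h'$, because a basis has points on every side of its triangle.

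For the pairwise term, the real content is the following. If $\beta=\beta_1\vsum_h\beta_2=\beta_1'\lsum_{h'}\beta_2'$ with $\beta_2$ horizontally and $\beta_2'$ left indivisible, then $\beta_2\cap\beta_2'=\varnothing$. The paper proves this in three steps:
\begin{itemize}
\item Restrict each cut to the other block: $\beta_2=(\beta_2\cap\beta_1')\lsum_s(\beta_2\cap\beta_2')$ and $\beta_2'=(\beta_2'\cap\beta_1)\vsum_{s'}(\beta_2\cap\beta_2')$.
\item Since $\varphi(\beta_1)$ and $\varphi(\beta_2)$ are disjoint, one of these shifts must be extreme, so that sum is also a sum in the other direction.
\item Its first part is nonempty (for instance, $\beta_1'$ has a point on the bottom row, which lies in $\beta_2$), so indivisibility forces the second part $\beta_2\cap\beta_2'$ to be empty.
\end{itemize}
Only then is $\gamma=\beta_1\cap\beta_1'$ a basis with $\beta=(\gamma\vsum_{h-|\beta_2'|}\beta_2)\lsum_{h'}\beta_2'=(\gamma\lsum_{h'}\beta_2')\vsum_h\beta_2$. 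Both shifts range over exactly $|\gamma|+1$ values, with $\gamma=\varnothing$ allowed. The converse also holds: every such triple yields a basis with both cuts, injectively by unique factorisation. Together these produce exactly $I^2\Delta^2T$, and the triple intersection is handled the same way.

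Your uniqueness argument is fine, and it supplies something the paper only asserts. Once $T(0)=1$, the first equation gives $I(0)=0$ and $I=(T-1)/\Delta T$. Then $[z^n]$ of the right-hand side of the second equation involves only $t_1,\dots,t_{n-1}$. So the coefficient of $t_n$ is just the $1$ on the left, and $t_1=1$ is forced rather than assumed.

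Do keep $T(0)=1$ as an explicit hypothesis, though. The constant terms also admit $(T(0),I(0))=(1/2,-1)$. Along that branch, the linear system for $(t_n,i_n)$ at order $n$ has determinant $(n+2)((n+2)^2-6)/2\neq 0$, so over $\mathbb{Q}$ there is a second formal solution.
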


The series $I(z)$ is actually the generating function of a certain subclass of bases which we call horizontally indivisible, see Section \ref{sec:indivisible bases}.

\paragraph{Differential equations and singularities.}
The question of recovering asymptotics of coefficients from a generating function that satisfies a differential equation has attracted a lot of attention, and we refer to \cite{FS09} for a history of the topic and main results. Intuitively, the nicer the equation is, the easier it is to extract information about the coefficients. In this direction, a well-studied class of functions is the class of \emph{D-finite} functions. A function $F$ is D-finite if it satisfies (on some complex domain $\Omega$) a linear differential equation of the form:
\begin{align*}
F^{(r)}(z) + a_1(z) F^{(r-1)}(z)+\ldots+a_r(z) F(z)=0,
\end{align*}
where $a_1,\ldots,a_r$ are (complex) rational functions. For $1 \leqslant j \leqslant r$ and $\zeta \in \Omega$, we define $\omega_\zeta(a_j)$ to be the order of the pole of $a_j$ at $\zeta$, with $\omega_\zeta(a_j)=0$ when $\zeta$ is not a pole of $a_j$. The differential equation is said to have a singularity at $\zeta$ if at least one of the $\left(\omega_\zeta(a_j)\right)_{1 \leqslant j \leqslant r}$ is positive. Furthermore, a singularity $\zeta$ is said to be \emph{regular} if $\omega_\zeta(a_j) \leqslant j$ for all $1 \leqslant j \leqslant r$, and irregular otherwise.

If a generating function $F$ is D-finite and its singularity with smallest modulus is unique and regular, then \cite[Theorem VII.10]{FS09} provides the asymptotic behaviour of the coefficients of $F$, which has a restricted form. In particular there can never be a stretched exponential in the asymptotics. For equations with an irregular dominant singularity, the asymptotic behaviour can be more diverse - and in particular include stretched exponentials - but there is no known automatic method to determine it. 

In our case, the function $T(z)$ is not D-finite, but only D-algebraic, meaning that it is a solution of an equation of the form $P(F(z),F^{(1)}(z), \ldots, F^{(k)}(z),z)=0$, where $P$ is a polynomial with complex coefficients and $k \geqslant 0$. Still, we manage to reduce the problem to the study of an (almost) linear differential equation with an irregular singularity, which explains the stretched exponential behaviour.

\paragraph{The method and its generality.}

Rather than analysing the equations of Theorem \ref{thm:D-algebraic} directly, we show in Section \ref{sec:subclasses} that the generating function $T(z)$ satisfies a linear differential equation (namely, \eqref{eq:quasi_D-finite}) with an irregular dominant singularity and an extra unknown term $F(z)$, whose coefficients are asymptotically negligible compared to those of $T(z)$ (Lemma \ref{lem_boundFT}).

Our main tool, Theorem \ref{thm:dichotomy_general}, states that, under certain technical conditions, coefficients of a generating function can only behave asymptotically in two different ways: either they grow at most polynomially, or they admit an explicit stretched exponential asymptotic form. Using the new equation \eqref{eq:quasi_D-finite}, we show in Section \ref{subsec:dichotomy} that this theorem applies in our case. Finally, in Section \ref{subsec:ruleout}, we rule out the possibility that the coefficients $t_n/n!$ grow at most polynomially, completing the proof of Theorem \ref{thm:asymptotic_behaviour}.

\smallskip

We expect our approach, and in particular Theorem \ref{thm:dichotomy_general}, to work in a general context, which would provide a generic method to determine asymptotics for D-finite series with stretched exponential behaviour. Additionally, our method applies to series that satisfy a linear differential equation with an extra unknown term, as long as we can sufficiently bound the extra term with respect to the original series.

\paragraph{Structure of the paper.}

In Section \ref{sec:defs}, we define the model and the main object of interest of the paper, that is, bases of triangular bootstrap percolation. Section \ref{sec:base decomposition} is devoted to the proof of Theorem \ref{thm:D-algebraic}, which we obtain through a canonical decomposition of bases. In Section \ref{sec:subclasses}, we investigate some subclasses of bases, which allows us to obtain a differential equation for $T(z)$, \eqref{eq:quasi_D-finite}, which is ``almost" linear. In Section \ref{sec:dichotomy result}, we state and prove a general technical result, Theorem \ref{thm:dichotomy_general}, for proving stretched exponential asymptotics of coefficients from generating functions. Finally, in Section \ref{sec:proof}, we complete the proof of Theorem \ref{thm:asymptotic_behaviour} by showing how \eqref{eq:quasi_D-finite} allows us to apply Theorem \ref{thm:dichotomy_general} in our case.

\section{Definitions}
\label{sec:defs}

For $n \geqslant 0$, denote by $\kT_n$ the set of points of the triangular grid of size $n$. A \emph{configuration} on $\kT_n$ is a subset $\eta \subseteq \kT_n$. We define the size of a configuration as its cardinality. 

The \emph{triangle bootstrap percolation process} is the dynamical process on the set of configurations defined as follows. Starting from an initial configuration $\eta$, one percolation step yields the configuration $\eta'$ such that for any $x \in \kT_n$, $x \in \eta'$ if and only if $x \in \eta$ or there is an upward triangle $\{x, y, z\}$ of size $2$ in $\kT_n$ such that $y, z \in \eta$. See Figure~\ref{fig:filling process} for an example of the process.

\begin{figure}[ht]
    \centering
    \includegraphics[page=2, width=\textwidth]{Figures/Triangle_Bases.pdf}
    \caption{An example of the percolation process. The initial configuration is the black points and triangles involved in each step are highlighted in blue.}
    \label{fig:filling process}
\end{figure}

The triangle bootstrap percolation process is convergent and confluent \cite{SS23}. Convergence means that the process reaches a limit configuration from any initial configuration. Confluence means that adding the points one by one in any allowed order from a given initial configuration yields the same limit. We denote by $\varphi(\eta)$ the limit of the triangle bootstrap percolation process starting from the initial configuration $\eta$, and say that $\eta \subseteq \kT_n$ \emph{percolates} if $\varphi(\eta) = \kT_n$. A \emph{basis} is a percolating configuration of size $n$ (which is the minimal size of a percolating configuration, see \cite[Lemma 2]{SS23}). Figure~\ref{fig:basis ex} provides some examples. A \emph{subbasis} $\beta$ of a basis $\eta$ is a subset $\beta \subseteq \eta$ such that $\varphi(\beta)$ is a triangle of size $|\beta|$. For convenience, we      consider such a configuration as both a subset of $\eta$ and a basis of $\kT_{|\beta|}$.
\begin{figure}[ht]
    \centering
    \includegraphics[page=3, height=2.5cm]{Figures/Triangle_Bases.pdf}
    \caption{Left: A basis of $\kT_5$. Right: Two configurations that do not percolate. \\ The initial configurations are given by the black points, and the final configurations by the black and gray points.}
    \label{fig:basis ex}
\end{figure}

We denote by $\kB_n$ the set of bases of $\kT_n$. Observe that because of the symmetries of the process, $\kB_n$ is stable under rotation by $2\pi/3$ and symmetry in the vertical axis.

\section{Decomposition and equations for bases}
\label{sec:base decomposition}

In this section, we describe a recursive decomposition of bases and deduce equations on their numbers from it. This allows us in particular to derive Theorem \ref{thm:D-algebraic}. We also provide an empirical analysis of the asymptotics of the coefficients of $T(z)$, supporting the result of Theorem \ref{thm:asymptotic_behaviour}.

\subsection{Shifted sums}

Let us start by introducing the operation that will allow us to decompose bases.

\begin{defin}
    Let $n_1, n_2 >0$. Let  $\eta_1 \subseteq \kT_{n_1}$ and $\eta_2 \subseteq \kT_{n_2}$ be two configurations and $h \in \intval{0, n_1}$. The $h$-\emph{shifted horizontal sum} of $\eta_1$ and $\eta_2$ is the configuration $\eta_1 \vsum_h \eta_2 \subseteq \kT_{n_1+n_2}$ obtained by copying $\eta_1$ on the top $n_1$ rows and $\eta_2$ on the bottom $n_2$ rows of $\kT_{n_1+n_2}$ at the positions $\intval{h, h+n_2-1}$. An example is provided in Figure~\ref{fig:sum ex}.  Similarly, we define the $h$-\emph{shifted left sum} and $h$-\emph{shifted right sum}, denoted respectively $\eta_1 \lsum_h \eta_2$ and $\eta_1 \rsum_h \eta_2$, as the configurations obtained through a rotational symmetry of the construction for the $h$-shifted horizontal sum, i.e. by copying $\eta_1$ in one of the two bottom corners and $\eta_2$ on the remaining lines at a position determined by $h$ (see Figure~\ref{fig:sums scheme}). 
    
    In the case where $\beta_1$, $\beta_2$ are bases of respective sizes $n_1$ and $n_2$, we will then see $\beta_1$ and $\beta_2$ as subbases of $\beta_1 \vsum_h \beta_2$ in the natural way, so $\beta_1 \vsum_h \beta_2=\beta_1\cup\beta_2$. More generally, if $\beta_1$, $\beta_2$ and $\beta$ are subbases of a given basis, we will only say that $\beta=\beta_1\vsum_h\beta_2$, if this relation holds for the bases with the natural subbasis structure.

    We say that a configuration $\eta$ admits a horizontal (resp. left, resp. right) \emph{cut} into a pair $(\eta_1, \eta_2)$ if it is the horizontal (resp. left, resp. right) shifted sum of $\eta_1$ and $\eta_2$ with $\eta_1, \eta_2 \neq \varnothing$. We say that $\eta$ admits a cut into a pair $(\eta_1,\eta_2)$ if it admits a horizontal, left or right cut into $(\eta_1,\eta_2)$, that is, $\eta=\eta_1 \usum_h \eta_2$ for some $\usum \in \{ \vsum, \lsum, \rsum \}$ and some $h \geq 0$.
\end{defin}

\begin{figure}[ht]
    \centering
    \includegraphics[page=4, height=3.5cm]{Figures/Triangle_Bases.pdf}
    \caption{The $1$-shifted horizontal sum of two configurations.}
    \label{fig:sum ex}
\end{figure}

\begin{figure}[ht]
    \centering
    \includegraphics[page=5, height=3cm]{Figures/Triangle_Bases.pdf}
    \caption{A schematic representation of the different $h$-shifted sums.\\ Left: $h$-shifted left sum. Middle: $h$-shifted horizontal sum. Right: $h$-shifted right sum.}
    \label{fig:sums scheme}
\end{figure}

\begin{thm}[{\cite[Proposition 3.8]{BijPermut}}]
\label{thm:sum}
    A configuration of size $n \geqslant 2$ is a basis if and only if it is the left, right or horizontal shifted sum of two bases.
\end{thm}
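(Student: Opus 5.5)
We prove the two implications separately. Throughout we use the convergence and confluence of the process from \cite{SS23}, and two facts about it. First, every basis of $\kT_m$ has exactly $m$ points. Second, the closure $\varphi(\eta)$ of any configuration is a disjoint union of triangles, pairwise at distance at least two in the appropriate sense. Moreover, a set of $k$ points never spans triangles of total size more than $k$. This is the standard "perimeter-type" monotone invariant: merging two triangles of sizes $a,b$ that touch or overlap produces a triangle of size at most $a+b$, and at most $a+b-1$ if they overlap.

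\emph{Sums of bases are bases.} Let $\beta_1\in\kB_{n_1}$ and $\beta_2\in\kB_{n_2}$, and consider $\beta_1\vsum_h\beta_2$. The cases $\lsum$ and $\rsum$ follow from this one by the rotational symmetry of $\kB_n$. The union has $n_1+n_2$ points, so it suffices to show that it percolates. Run the process separately inside each region; confluence allows this order.
\begin{itemize}
\item The top $n_1$ rows fill to the full triangle $\kT_{n_1}$ sitting at the top.
\item The block of $\beta_2$ fills to a triangle of size $n_2$ on the bottom $n_2$ rows, at horizontal offset $h\in\intval{0,n_1}$.
\end{itemize}
These two triangles together cover, row by row, an interval in each of the bottom $n_2$ rows. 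That interval extends from column $h$ to column $h+n_2-1$, and the filled top triangle lies directly above row $n_1+1$. We then show, by induction on the rows going downward, that each row becomes full. In row $n_1+j$, the already-full row above, together with the occupied interval, propagates the infection leftwards and rightwards one site at a time via upward triangles. This works because $0\leqslant h\leqslant n_1$ guarantees that the two pieces touch, so there is no gap. Equivalently, one can note that two triangles which touch span the triangle of total size. The condition on $h$ is exactly what makes the two triangles adjacent.

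\emph{Every basis of size $n\geqslant2$ is a sum of two bases.} Let $\beta\in\kB_n$. Run the process, but stop just before the last merging. Because triangles only ever merge, at the final step two or more disjoint maximal triangles $A_1,\dots,A_k$ of the closure of a proper sub-stage combine into $\kT_n$. More cleanly, remove one extreme point or consider the last merge. Our concrete choice is to look at the bottom row of $\kT_n$: it must become full, and this forces a cut.

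Set $\eta_1=\beta\cap(\text{top } n_1 \text{ rows})$, where $n_1$ is chosen as the largest value such that $\varphi$ of the points in the top $n_1$ rows is exactly $\kT_{n_1}$, if such a value exists. Rotating as needed, we show that one of the three corner triangles must be "self-percolating". The argument uses the size invariant. Consider the decomposition of $\varphi(\beta\setminus\{x\})$ for a suitable point $x$. This closure is a union of disjoint triangles of total size at most $n-1$, and adding $x$ merges them all into $\kT_n$. That is only possible if there are exactly two triangles, of sizes summing to $n$, lying adjacent along a line. Such a configuration is precisely a shifted sum.

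The counting invariant then forces each piece to contain exactly as many points as its size. Hence each piece is a basis, either of its triangle or of the complementary block. This direction is the main obstacle. The hard part is rigorously identifying a line that separates $\beta$ into a corner triangle and a trapezoid-shaped remainder in which each piece percolates on its own. The tools for this are the additivity and strictness of the size invariant under merging, applied at the last merge of the process.
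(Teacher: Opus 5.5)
Your ``if'' direction is correct. The paper gives no proof of this statement and imports it from \cite{BijPermut}, so I am judging your argument on its own. The two pieces fill their triangles independently, the apex of the lower triangle sits in row $n_1+1$ under the full row $n_1$, and each lower row then fills by spreading sideways.

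The gap is the ``only if'' direction: you do not actually prove it, and the mechanism you propose for it does not work. Since $|\beta\setminus\{x\}|=n-1$, the triangles of $\varphi(\beta\setminus\{x\})$ have total size at most $n-1$, so they can never be ``two triangles of sizes summing to $n$''. Their number also depends on $x$ in an uncontrolled way. Number rows from the top (starting at $1$) and positions from the left (starting at $0$). Then $\beta=\{(1,0),(2,0),(3,2),(4,0)\}$ is a basis of $\kT_4$. Removing $x=(2,0)$ leaves three pairwise non-adjacent points, all joined by $x$, while removing $(4,0)$ leaves a single triangle of size $3$. You give no rule for picking a ``suitable'' $x$. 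More fundamentally, single-point removal cannot produce the decomposition in general. If $\beta\setminus\{x\}$ spanned one triangle of size $n-1$, then $\beta$ would have a cut whose second part has size $1$, and the $1$-irreducible bases of Section~\ref{sec:subclasses} (Figure~\ref{fig:irred ex}) have no such cut. Your description of the target is also off: the second part of a shifted sum does not percolate a trapezoid. It spans a triangle of size $n_2$ resting on the side opposite the corner triangle.

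Your invariants (closures are unions of non-adjacent triangles; merging touching triangles is subadditive in size; $k$ points span total size at most $k$) are the right tools. What is missing is to run the merging on a \emph{partition} of $\beta$, the analogue of the rectangles process of bootstrap percolation. This also replaces your ``last merge'', which has no meaning for the time-stages $\eta_s$ since those are not closed.
\begin{itemize}
\item Start from the singletons of $\beta$. While two current triangles overlap or contain adjacent points, replace them by their span, which is the smallest triangle containing both (a lemma you need from \cite{SS23}), and merge the corresponding parts.
\item When this stops, the triangles are pairwise disjoint and non-adjacent. So their union is closed, contains $\beta$, and lies in $\varphi(\beta)=\kT_n$. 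Hence it equals $\kT_n$, and by connectivity it is a single triangle.
\item The last merge therefore gives $\beta=\beta'\sqcup\beta''$ spanning triangles $T',T''$ with $n\leqslant|T'|+|T''|\leqslant|\beta'|+|\beta''|=n$. So $\beta'$ and $\beta''$ are bases of $T'$ and $T''$.
\end{itemize}
For the geometry, give the point in row $i$ at position $j$ the coordinates $(n-i,\,j,\,i-1-j)$, which sum to $n-1$. Subtriangles are then the sets $\{x\geqslant p\}$ with $p\in\mathbb{N}^3$, of size $n-|p|$. The smallest one containing $\{x\geqslant p\}$ and $\{x\geqslant q\}$ is $\{x\geqslant\min(p,q)\}$. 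Since that triangle is $\kT_n$, we get $\min(p,q)=0$ componentwise. The equality $|T'|+|T''|=n$ reads $|p|+|q|=n$, with $p,q\neq0$ because both parts are nonempty. Disjoint supports then force one of them, after rotation $p=(n-a,0,0)$, to have a single nonzero coordinate. So $T'$ is the top corner triangle of size $a$. The other vector is $q=(0,q_2,q_3)$ with $q_2+q_3=a$, so $T''$ occupies the bottom $n-a$ rows with bottom row at positions $\intval{q_2,q_2+n-a-1}$. That is, $\beta=\beta'\vsum_{q_2}\beta''$ with $q_2\in\intval{0,a}$.
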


Observe that such a decomposition is not unique. Take for instance the configuration composed of the points on the bottom line of $\kT_n$: it admits a right and a left cut of any shift, so $2(n-1)$ cuts in total.

\subsection{Equations for bases}
\label{sec:indivisible bases}

For all $n \geqslant 0$, let $t_n$ denote the number of bases of size $n$. We define the generating function $T(z)$ of bases as 
\[T(z) = \sum_{n \geqslant 0} t_nz^n.\]
Its first terms are 
\[T(z) = 1 + z + 3z^2 + 16z^3 + 122z^4 + 1188z^5 + \BigO(z^6).\]
The goal of this section is to write the equations of Theorem~\ref{thm:D-algebraic} which characterise $T(z)$. To do this, we rely on the shifted sum decomposition from Theorem~\ref{thm:sum}.

\smallskip

First, let us define a canonical choice of decomposition for a basis. 

\begin{defin}
    A basis is \emph{horizontally indivisible} if it is not empty and does not admit a horizontal cut. Indivisibility in the other directions is defined similarly. We denote by $I(z)$ the generating function of horizontally indivisible bases. Observe that by symmetry, it is equal to the generating function of left (resp. right) indivisible bases. 
\end{defin}


Lemma~\ref{lem:uniqueness of the decomposition} will allow us to make a canonical choice of a cut in a given direction by requiring the second part to be indivisible. To prove it, we will need the following result.

\begin{lemma}
\label{lem:intersection}
    Let, $\beta$ be a basis and let $\beta_1$ and $\beta_2$ be two subbases of $\beta$. Then:
    \begin{itemize}
    \item[(i)] $\beta_1\cap\beta_2$ is also a subbasis of $\beta$.
    \item[(ii)] If $\beta = \beta_1 \usum_h \beta_2$ for some sum $\usum \in \{\vsum, \lsum, \rsum\}$ and shift $h \in \intval{0, |\beta_1|}$, then  any subbasis $\gamma$ of $\beta$ satisfies $\gamma = (\beta_1\cap\gamma) \usum_{h'} (\beta_2\cap\gamma)$ for some shift $h'$.
    \end{itemize}
\end{lemma}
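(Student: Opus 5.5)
We prove both parts using two properties of the closure $\varphi$ on $\kT_n$. First, since the upward triangles are translates of one another, the closure of any configuration is a disjoint union of upward triangles. These triangles are pairwise ``separated'': no size-$2$ upward triangle meets two of them in its two lower vertices. Second, a configuration $\alpha$ inside a triangle $\Delta$ of side $m$ with $\varphi(\alpha)\supseteq\Delta$ has $|\alpha|\ge m$, by the minimality result \cite[Lemma 2]{SS23} applied inside $\Delta$. Combining the two, a subset $\beta'\subseteq\beta$ is a subbasis exactly when $\varphi(\beta')$ is a single triangle whose side equals $|\beta'|$. A useful consequence is the following. Since $\beta$ is a basis, every subset $\alpha\subseteq\beta$ has its closure equal to a union of triangles $\Delta_i$ with $\sum_i \mathrm{side}(\Delta_i)\le|\alpha|$. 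Indeed, the points of $\beta\setminus\alpha$ can add at most their number to the total side length, and the final total must be $n=|\beta|$. Each $\alpha\cap\Delta_i$ then spans $\Delta_i$, so $|\alpha\cap\Delta_i|\ge \mathrm{side}(\Delta_i)$. Hence equality holds throughout: \emph{every subset of a basis closes onto a disjoint union of triangles whose pieces are subbases.} I would establish this as a preliminary claim.

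For (i), let $\Delta_j=\varphi(\beta_j)$. The set $\varphi(\beta_1)\cap\varphi(\beta_2)=\Delta_1\cap\Delta_2$ is again an upward triangle, possibly empty. Apply the preliminary claim to $\alpha=\beta_1\cap\beta_2$. Its closure lies in $\Delta_1\cap\Delta_2$ and is a union of subbasis-triangles. We must show this union is a single triangle equal to $\Delta_1\cap\Delta_2$, which needs a counting argument. The set $\beta_1\cup\beta_2$ closes onto a triangle containing $\Delta_1\cup\Delta_2$. Its size is at most $|\beta_1|+|\beta_2|-|\beta_1\cap\beta_2|$, and the side of that triangle is at least $s_1+s_2-s_{12}$, where $s_{12}$ is the side of $\Delta_1\cap\Delta_2$. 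This follows from an elementary geometric inequality for the smallest upward triangle containing two upward triangles, which is read off in barycentric-type coordinates where triangles are $\{x\ge a,\ y\ge b,\ z\ge c\}$. This gives $|\beta_1\cap\beta_2|\le s_{12}$. For the reverse inequality, $\Delta_1\cap\Delta_2$ must be spanned by points of $\beta_1\cap\beta_2$. Points of $\beta_1\setminus\beta_2$ lie in $\Delta_1$, and their contribution inside $\Delta_1\cap\Delta_2$ is controlled because $\beta_2$ already fills $\Delta_2$. Given the preliminary claim, the closure of $\beta_1\cap\beta_2$ is a union of triangles of total side exactly $|\beta_1\cap\beta_2|$. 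The remaining task is to show this union is a single triangle exactly when the counts match. This is the step I expect to be the main obstacle, and I would handle it with the coordinate description above, checking carefully the case where the intersection is empty.

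For (ii), take a subbasis $\gamma$. Let $\Delta_1,\Delta_2$ be the closures of $\beta_1,\beta_2$; they are the two pieces of the shifted sum, e.g.\ the top $n_1$ rows and a triangle in the bottom rows. By (i), $\gamma_j=\gamma\cap\beta_j$ is a subbasis when nonempty, so $\varphi(\gamma_j)=\gamma\text{'s triangle}\cap\Delta_j$ is a triangle of side $|\gamma_j|$. Since $\beta_1,\beta_2$ partition $\beta$, we get $|\gamma_1|+|\gamma_2|=|\gamma|$. The triangle $\varphi(\gamma)$ of side $|\gamma|$ is therefore split by the cut line into a top triangle of side $|\gamma_1|$ and a triangle of side $|\gamma_2|$ sitting in its bottom $|\gamma_2|$ rows. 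This is precisely the shape $\gamma_1\usum_{h'}\gamma_2$, with $h'$ read off from the horizontal offset. The degenerate cases where some $\gamma_j$ is empty are handled with $h'=0$.
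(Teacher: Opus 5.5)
\textbf{There is a genuine gap in part (i).} The decisive step, that $\varphi(\beta_1\cap\beta_2)$ is the single triangle $\Delta_1\cap\Delta_2$, is left as the ``main obstacle'', and the tool you propose for it cannot close it. Your part (ii) is fine once (i) is known in the form $\varphi(\beta_1\cap\beta_2)=\Delta_1\cap\Delta_2$.

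Your preliminary claim is correct, with two caveats. First, the sentence about the points of $\beta\setminus\alpha$ proves $\sum_i\mathrm{side}(\Delta_i)\ge|\alpha|$, not $\le$; the $\le$ comes from minimality. Second, ``one point raises the total side by at most one'' needs the merging formula: two overlapping or touching triangles close onto a triangle of side $s_1+s_2-s_{12}$ with $s_{12}\ge 0$.

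The counting is not the problem. Applying your claim to $\beta_1\cup\beta_2$, whose closure is the smallest triangle containing $\Delta_1\cup\Delta_2$, gives $|\beta_1\cap\beta_2|=s_{12}$ exactly. But matching counts do not force a single triangle. The three corners of a side-$3$ triangle are pairwise non-adjacent, so their closure is themselves. Its total side is $3$, the side of the ambient triangle, yet it is not a triangle. So no argument using only the coordinate geometry of closures of $\beta_1\cap\beta_2$ can succeed. You must use the fact that these points sit inside a \emph{basis}.

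\textbf{The missing ingredient} is the one the paper establishes first. In a percolation order of a basis, there are exactly $n(n-1)/2$ elementary upward triangles and $n(n-1)/2$ added points. Each triangle can be used only once, so each is used exactly once, and the paper shows it always adds the same point whatever the order.

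With this, take a point $x\in(\Delta_1\cap\Delta_2)\setminus\beta$. It is added by a triangle contained in $\Delta_1$ (use an order that first percolates $\beta_1$) and in $\Delta_2$ (same with $\beta_2$), hence in $\Delta_1\cap\Delta_2$. Induction along the order, using $\beta\cap\Delta_j=\beta_j$, then gives $\Delta_1\cap\Delta_2\subseteq\varphi(\beta_1\cap\beta_2)$.

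You can also phrase the fix in your counting language, needing only the bijection within one order. Let $\Delta$ be a side-$k$ subtriangle. Its $k(k-1)/2$ internal elementary triangles each add a distinct point of $\Delta\setminus\beta$, so $|\beta\cap\Delta|\le k$. If equality holds, every point of $\Delta\setminus\beta$ is added by an internal triangle, and induction gives $\varphi(\beta\cap\Delta)=\Delta$. Apply this to $\Delta=\Delta_1\cap\Delta_2$. Here $\beta\cap\Delta=\beta_1\cap\beta_2$, since $|\beta\cap\Delta_j|\le s_j=|\beta_j|$ forces $\beta\cap\Delta_j=\beta_j$. Together with your count $|\beta_1\cap\beta_2|=s_{12}$, this completes (i).
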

\begin{proof}
    We say that $\omega := \{x_1,\ldots,x_{\frac{n(n-1)}{2}}\}$ is a percolation order of a basis $\beta$ of $\kT_n$ if $\omega \cup \beta = \kT_n$ and, for all $1 \leq i \leq n(n-1)/2$, there exists an upward triangle of size $2$ $\{x_i,y,z\}$ such that $y,z \in \beta \cup \{x_1,\ldots,x_{i-1}\}$. In other words, one can add the points of $\kT_n \backslash \beta$ in the order given by $\omega$ following the percolation rule.
    We start by proving that each point of $\kT_n \setminus\beta$ is added through the same triangle, independently of the percolation order.
    Let $T$ denote the set of upward triangles of size $2$ in $\kT_n$. Observe that any percolation order $\omega$ induces a map $f_\omega: T \to \kT_n \setminus \beta$, where $f_\omega(t)$ is the point of $t$ added in the percolation order. We want to prove that $f_\omega$ does not depend on $\omega$. First observe that since both sets have cardinality $n(n-1)/2$ and a given triangle cannot be used twice, $f_\omega$ is a bijection. 
    Then consider $\omega_0$ a percolation order, which exists since $\beta$ is a basis. Let $t_i$ be the triangle used at step $i$ in the order $\omega_{0}$, and $x_i$ the point added at that step. 
    We show the following by induction: for all $i \leqslant n$, $f_\omega(t_i)=x_i$ for all $\omega$.\\
    At step $1$, two points of $t_i$ are in the initial configuration, hence $t_1$ is necessarily used to add $x_1$ in any percolation order. So for all $\omega$, $f_{\omega}(t_1)=x_1$.\\
    Now assume that it holds up to step $i$ and consider the triangle $t_{i+1}$. Then, $t_{i+1}$ is made of $x_{i+1}$ and two points $a,b$ which either are in $\beta$, or were added before $x_{i+1}$ in $\omega_0$ and therefore have the same preimage by $f_\omega$ for all $\omega$, which is not $t_{i+1}$. Hence, $t_{i+1}$ can only be used to add $x_{i+1}$, independently of the percolation order. That is, $f_{\omega}(t_{i+1})=x_{i+1}$ for all $\omega$, which ends the induction. 
    
    Let us now prove (i). Denote by $k$ the size of the smallest triangle containing $\beta_1 \cap \beta_2$ and, by a small abuse of notation, denote by $\kT_k$ this triangle. First, since $\beta$ is a basis, we have according to \cite[Lemma 2]{SS23} that $|\beta_1 \cap \beta_2| \leqslant k$. Now consider a point $x$ in $\kT_k \setminus (\beta_1\cap\beta_2)$. Since both $\beta_1$ and $\beta_2$ are bases, $x$ was added in each of them through some upward triangles $T_1$ and $T_2$. Let $\omega_1, \omega_2$ be two percolation orders associated respectively to $\beta_1$ and $\beta_2$. Clearly one can complete both into percolation orders of $\beta$. By the result shown above, necessarily $x$ was added through the same triangle in both, so that $T_1=T_2\subseteq \kT_k$. Therefore, $x \in \varphi(\beta_1 \cap \beta_2)$. This shows that $\beta_1 \cap \beta_2$ percolates, so it is a basis. 
    
    In order to prove (ii), assume without loss of generality that $\beta = \beta_1 \vsum_{h} \beta_2$. Now observe that, under this assumption, $\beta_1\cap\gamma$ and $\beta_2\cap\gamma$ are disjoint subbases of $\beta$ whose union is $\gamma$, a subbasis, with $\beta_1\cap\gamma$ being made of the top rows of $\gamma$. This is exactly saying that $\gamma = (\beta_1\cap\gamma) \vsum_{h'} (\beta_2\cap\gamma)$ for some $h'$.
\end{proof}

We can now prove the key result to establish equations. 

\begin{lemma}
\label{lem:uniqueness of the decomposition}
    Let $\beta, \beta'$ be two bases and let $\eta, \eta'$ be two horizontally indivisible bases. If for some shifts $h, h'$ we have $\beta \vsum_h\eta = \beta'\vsum_{h'}\eta'$,  then $\beta = \beta'$, $\eta = \eta'$ and $h=h'$.
    
    The same result holds for the two other directions.
\end{lemma}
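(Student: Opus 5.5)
We plan to use Lemma~\ref{lem:intersection}(ii) to compare the two decompositions. Write $\gamma := \beta \vsum_h \eta = \beta' \vsum_{h'} \eta'$, a basis of size $n$. Then $\beta,\eta,\beta',\eta'$ are all subbases of $\gamma$, with $\beta$ (resp. $\beta'$) occupying exactly the top $|\beta|$ (resp. $|\beta'|$) rows of the underlying triangle. The bottom parts $\eta$ and $\eta'$ occupy the bottom $|\eta|$ (resp. $|\eta'|$) rows. Since $|\beta|+|\eta| = |\beta'|+|\eta'| = n$, it suffices to show $|\eta| = |\eta'|$. Indeed, $\beta$ is then exactly the set of points of $\gamma$ in the top $n-|\eta|$ rows, and similarly for $\beta'$, so $\beta=\beta'$. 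Likewise $\eta=\eta'$, as the points of $\gamma$ in the bottom rows. Finally $h=h'$ follows because $\eta$ is nonempty, so its position within the bottom rows is determined by $\gamma$. More precisely, $h$ is the offset of the smallest triangle $\varphi(\eta)$ inside those rows, which is fixed by the set $\eta$ itself.

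To prove $|\eta|=|\eta'|$, assume by symmetry that $|\eta'| < |\eta|$, so that $\eta'$ sits strictly lower than $\eta$. We apply Lemma~\ref{lem:intersection}(ii) to the decomposition $\gamma = \beta' \vsum_{h'} \eta'$ with the subbasis $\eta$ of $\gamma$. This gives
\[ \eta = (\beta'\cap\eta) \vsum_{h''} (\eta'\cap\eta) \]
for some shift $h''$. Since $\eta'$ lies in the bottom $|\eta'|$ rows, it is contained in the rows occupied by $\eta$, and $\eta'\subseteq\gamma$. Hence $\eta'\cap\eta = \eta'$, which is nonempty.

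It remains to check that $\beta'\cap\eta$ is nonempty, so that this is a genuine horizontal cut of $\eta$ and contradicts its horizontal indivisibility. We know $|\beta'\cap\eta| = |\eta|-|\eta'| > 0$. This holds because the decomposition is a disjoint union of subbases whose sizes add up to $|\eta|$, each summand being a basis of its own size. Alternatively, $\eta$ is a basis of size $|\eta|$ spanning its $|\eta|$ rows, while $\eta'$ only has $|\eta'|$ points. The size bookkeeping in the sum is that a shifted sum of bases of sizes $a,b$ lives in $\kT_{a+b}$. So $\eta$ admits a horizontal cut into two nonempty parts, a contradiction. Exchanging roles handles $|\eta'|>|\eta|$.

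The main subtlety will be the identification step: making sure the subbasis $\eta$ of $\gamma$ really occupies the bottom $|\eta|$ rows of $\varphi(\gamma)$ as a sub-triangle. We also need that intersecting with $\beta'$ and $\eta'$ splits $\eta$ along a horizontal line, so that the conclusion of Lemma~\ref{lem:intersection}(ii) is a horizontal cut with both parts nonempty. The other two directions follow by the rotational symmetry of the process.
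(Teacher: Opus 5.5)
Your proposal is correct and follows essentially the same route as the paper. Assuming the bottom parts differ in size, you apply Lemma~\ref{lem:intersection}(ii) to write the larger one as $\eta=(\beta'\cap\eta)\vsum_{h''}\eta'$ with both parts nonempty, contradicting horizontal indivisibility exactly as the paper does with $\gamma=\beta'\cap\eta$; your derivation of $h=h'$ from the fact that $\varphi(\eta)$ fixes the offset is an equally valid variant of the paper's remark that a shifted copy of $\eta$ would miss a side of its triangle and so fail to percolate.
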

\begin{proof}
    Assume that $\beta \neq \beta'$. Then without loss of generality $\beta \subsetneq \beta'$ and so $\eta' \subsetneq \eta$. Let $\gamma = \beta' \cap \eta \neq \varnothing$. According to Lemma~\ref{lem:intersection}, $\gamma$ is a basis and $\eta = \gamma\vsum_s(\eta\cap\eta') = \gamma\vsum_s\eta'$ for some $s$. This is impossible as $\eta$ is horizontally indivisible, so $\beta = \beta'$.
    
    Now, as $\eta$ and $\eta'$ are composed of the points that do not belong to $\beta$, the only way to have $\eta \neq \eta'$ is to have $h\neq h'$. In this case, $\eta$ must have no points on one of the sides of $\kT_{|\eta|}$. This is impossible as, then, $\eta$ would not percolate. So $\eta = \eta'$ and $h=h'$.
\end{proof}

It follows from Lemma \ref{lem:uniqueness of the decomposition} that, when a basis admits several cuts in a given direction, there is only one cut such that the second part is indivisible in that same direction: the cut for which it has minimal size. This allows us to write the following more precise version of Theorem~\ref{thm:D-algebraic}, which we prove just after.

\begin{thm}
\label{thm:TandI}
    Denoting $\Delta T(z) \coloneqq (zT(z))'$, the generating functions $T(z)$ and $I(z)$ of bases and horizontally indivisible bases, respectively, are the unique series satisfying the equations
    \begin{equation}
    \label{eq:indiv eq}
        T(z) = 1 + I(z)\Delta T(z)
    \end{equation}
    and 
    \begin{equation}
    \label{eq:inclus-exclus}
        T(z) = 1 + z + 3I(z)(\Delta T(z)-1)-3I(z)^2\Delta^2T(z) + I(z)^3\Delta^3T(z).
    \end{equation}
\end{thm}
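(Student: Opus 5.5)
The plan is to prove the two equations combinatorially and then argue uniqueness by coefficient extraction. The key counting fact is that $\Delta T(z)=\sum_n (n+1)t_n z^n$ counts pairs $(\beta,h)$ with $\beta$ a basis of size $n$ and $h\in\intval{0,n}$, i.e.\ the data of a first summand in a shifted sum.

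For \eqref{eq:indiv eq}, I will consider a basis $\beta$ of size $n\geqslant 1$. Either it is horizontally indivisible (corresponding to the empty first summand, $h=0$, with the convention that the term $1$ of $T$ accounts for it), or it admits a horizontal cut. Among all horizontal cuts, Lemma~\ref{lem:uniqueness of the decomposition} together with the remark after it says exactly one cut $\beta=\beta_1\vsum_h\eta$ has $\eta$ horizontally indivisible, namely the one minimising $|\eta|$. Existence follows by taking a horizontal cut with $|\eta|$ minimal: if $\eta$ had a horizontal cut $\eta=\eta_1\vsum\eta_2$, then $\beta=(\beta_1\vsum\eta_1)\vsum\eta_2$, and this contradicts minimality. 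Here I must check that $\beta_1\vsum_h\eta_1$ is a basis, which holds by Theorem~\ref{thm:sum}. Conversely, any pair $((\beta_1,h),\eta)$ with $\eta$ indivisible yields a basis by Theorem~\ref{thm:sum}. Hence bases of size $\geqslant1$ are in bijection with such pairs, and letting $\beta_1=\varnothing$ (with the single shift $h=0$) recover the indivisible ones. This gives $T-1=I\cdot\Delta T$.

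For \eqref{eq:inclus-exclus}, I will apply inclusion–exclusion over the three directions. By Theorem~\ref{thm:sum}, every basis of size $\geqslant2$ has at least one cut. Writing $A_d$ for the set of bases with a cut in direction $d$, I get $t_n=[n\leqslant1]+|A_\vsum\cup A_\lsum\cup A_\rsum|$. By the first part and symmetry, $|A_d|$ has generating function $I(\Delta T-1)$. The difficulty is the double and triple intersections. My claim is that a basis with canonical cuts in two directions decomposes uniquely as a triple: a basis-with-two-shifts-worth of data $\Delta^2T$ and two indivisible pieces, giving $I^2\Delta^2 T$. Similarly, three cuts yield $I^3\Delta^3T$. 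Geometrically, the indivisible pieces $\eta_\vsum,\eta_\lsum$ sit in the bottom strip and the bottom-left corner region respectively. Using Lemma~\ref{lem:intersection}(ii) on $\gamma=\eta_\lsum$ with respect to the horizontal cut, I will show these pieces are disjoint. The remaining points $\beta\setminus(\eta_\vsum\cup\eta_\lsum)$ then form a subbasis $\beta_0$, and $\beta$ is an iterated sum of $\beta_0$ with the two indivisible pieces. Each application of $\Delta$ accounts for the choice of one shift, through $z\frac{d}{dz}$ acting after multiplication by $z$ on the growing size. Conversely, each such choice of $(\beta_0,\text{two shifts},\eta_\vsum,\eta_\lsum)$ reconstructs a basis lying in both $A_\vsum$ and $A_\lsum$, again via Theorem~\ref{thm:sum}. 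The signs and the factors $3,3,1$ then come from inclusion–exclusion and the $2\pi/3$ symmetry. The subtraction of $1$ in $\Delta T-1$ removes the empty first summand. I need to double-check the analogous boundary conventions (empty $\beta_0$) in the $\Delta^2,\Delta^3$ terms; I expect them to work out so that no correction term is needed, which is what the formula predicts.

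The main obstacle is this intersection step. I must show that the two canonical indivisible pieces for different directions are disjoint (or at least do not interact), and that the shift data really contribute exactly a factor $\Delta$ each. This is where the precise geometry of $\lsum,\rsum$ and Lemma~\ref{lem:intersection} must be used carefully.

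Uniqueness is the easy last step. Equation~\eqref{eq:indiv eq} has the form $[z^n]T=\sum_k i_k (n-k+1)t_{n-k}$ with $i_0=0$, and \eqref{eq:inclus-exclus} contains the term $3i_n$ linearly at order $n$, with all other contributions involving only lower coefficients. Eliminating $t_n$ between the two then gives a recursion determining $i_n$ and $t_n$ from earlier coefficients. I need to verify that the resulting coefficient of $i_n$ is nonzero, roughly $3-1=2$ after substitution.
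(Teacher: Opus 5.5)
Your proof of \eqref{eq:indiv eq} and your inclusion--exclusion framework both match the paper. There is a genuine gap, though. The double and triple intersection terms, which carry all the content of \eqref{eq:inclus-exclus}, are only announced, and the mechanism you propose for disjointness does not work on its own. Write $\beta=\beta_1\vsum_h\eta_\vsum=\beta_1'\lsum_{h'}\eta_\lsum$. Applying Lemma~\ref{lem:intersection}(ii) to $\eta_\lsum$ and the horizontal cut gives $\eta_\lsum=(\beta_1\cap\eta_\lsum)\vsum_{s'}(\eta_\vsum\cap\eta_\lsum)$. This is a horizontal decomposition of a basis that is only left-indivisible, so there is no contradiction: left-indivisible bases can have horizontal cuts, already in size $2$. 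The missing idea is geometric. Pair this with the dual decomposition $\eta_\vsum=(\beta_1'\cap\eta_\vsum)\lsum_{s}(\eta_\vsum\cap\eta_\lsum)$. One of the two induced shifts must be extremal, since otherwise $\varphi(\beta_1)$ and $\varphi(\eta_\vsum)$ would overlap. A sum with extremal shift can be reread as a sum in the other direction. The complementary piece is nonempty: for example, $\beta_1'$ has a point on the bottom side of $\kT_{|\beta|}$, and that point must lie in $\eta_\vsum$. Indivisibility then forces $\eta_\vsum\cap\eta_\lsum=\varnothing$.

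The second missing piece is the exact range of the shifts, which is what produces $\Delta^2T$. That factor counts data with both shifts in $\intval{0,|\beta_0|}$. Your remark about $\Delta$ acting ``on the growing size'' suggests letting the outer shift run over $\intval{0,|\beta_0|+|\eta_\vsum|}$; that would give $I\,\Delta(I\,\Delta T)$ and overcount. For instance, take the basis of $\kT_3$ made of its two bottom corners and the right-hand point of its middle row (or its mirror image, depending on the orientation of $\lsum$). It is a two-point basis of a bottom corner, itself a horizontal sum of one-point bases, summed at an extremal shift with a one-point basis along the opposite side. Yet its top two rows contain only one point, so it has no horizontal cut. Your converse ``via Theorem~\ref{thm:sum}'' is therefore unjustified: that theorem only says the iterated sum is a basis, not that it lies in $A_\vsum\cap A_\lsum$. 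You need two facts:
(a) once the pieces are disjoint, all points of $\beta$ in the bottom $|\eta_\vsum|$ rows lie in $\eta_\vsum$, so $\eta_\lsum$ avoids those rows, and symmetrically. This forces $h\in\intval{|\eta_\lsum|,|\beta_1|}$ and $h'\in\intval{0,|\beta_1'|-|\eta_\vsum|}$. Equivalently, both shifts relative to $\beta_0=\beta_1\cap\beta_1'$ (a subbasis by Lemma~\ref{lem:intersection}(i)) lie in $\intval{0,|\beta_0|}$.
(b) For such shifts the two sums commute, so the result has both cuts, with canonical pieces $\eta_\vsum,\eta_\lsum$ by Lemma~\ref{lem:uniqueness of the decomposition}.
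The same is needed for three directions.

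Separately, your uniqueness step misreads \eqref{eq:inclus-exclus}. Since $I(0)=0$ and $\Delta T-1$ has no constant term, for $n\geqslant1$ the coefficient of $z^n$ on its right-hand side involves only $t_k,i_k$ with $k<n$; there is no $3i_n$. So \eqref{eq:inclus-exclus} determines $t_n$ directly, and \eqref{eq:indiv eq} then gives $i_n=t_n-\sum_{k=1}^{n-1}i_k(n-k+1)t_{n-k}$. Note also that $I(0)=0$ must be imposed, since $i_0=-1$, $t_0=1/2$ also solves both equations at order $0$.
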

We start by establishing how to obtain the generating function of the sums of two families of bases.
\begin{lemma}
\label{lem:Delta}
    Let $\mathfrak{F}$ be a family of bases and let $\mathfrak{G}$ be a family of horizontally indivisible bases. Denote by $F(z)\coloneqq\sum f_nz^n$ and $G(z)\coloneqq\sum g_nz^n$ their respective generating functions. Then the generating function $(F\vsum G)(z)$ of the set $\mathfrak{F}\vsum\mathfrak{G}$ of horizontal sums of elements of $\mathfrak{F}$ and $\mathfrak{G}$ satisfies 
    \[(F\vsum G)(z) = \Delta F(z) \times G(z).\]
    
    The same equality holds for sums in another direction, provided that bases of $\mathfrak{G}$ are indivisible in that direction.
\end{lemma}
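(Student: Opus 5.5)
The plan is to count the elements of $\mathfrak{F}\vsum\mathfrak{G}$ directly, using the uniqueness of the decomposition from Lemma~\ref{lem:uniqueness of the decomposition}, and then to recognise the resulting convolution as $\Delta F(z)\times G(z)$. By definition, $\mathfrak{F}\vsum\mathfrak{G}$ is the set of configurations $\beta\vsum_h\eta$ with $\beta\in\mathfrak{F}$, $\eta\in\mathfrak{G}$ and $h\in\intval{0,|\beta|}$. Each such configuration is a basis by Theorem~\ref{thm:sum}, and it has size $|\beta|+|\eta|$.

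\textbf{Step 1 (bijection).} Consider the map
\[
(\beta,\eta,h)\;\longmapsto\;\beta\vsum_h\eta
\]
from the set of triples with $\beta\in\mathfrak{F}$, $\eta\in\mathfrak{G}$ and $h\in\intval{0,|\beta|}$ onto $\mathfrak{F}\vsum\mathfrak{G}$. It is surjective by definition. It is injective by Lemma~\ref{lem:uniqueness of the decomposition}, because every $\eta\in\mathfrak{G}$ is horizontally indivisible: if $\beta\vsum_h\eta=\beta'\vsum_{h'}\eta'$, then $\beta=\beta'$, $\eta=\eta'$ and $h=h'$.

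One subtlety needs checking: the paper defines the sum only for $n_1,n_2>0$. Elements of $\mathfrak{G}$ are nonempty by the definition of indivisibility, so $n_2>0$ always holds. If $\mathfrak{F}$ contains the empty basis, we adopt the natural convention that $\varnothing\vsum_0\eta=\eta$, with the single shift $h=0$. This contributes exactly $G(z)$, which matches the constant term $f_0$ of $\Delta F(z)$ multiplied by $G(z)$.

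\textbf{Step 2 (counting).} For a fixed $\beta$ of size $m$ there are $m+1$ admissible shifts. The number of elements of size $n$ in $\mathfrak{F}\vsum\mathfrak{G}$ is therefore
\[
\sum_{m+k=n}(m+1)f_m\,g_k .
\]
Since $\Delta F(z)=(zF(z))'=\sum_m (m+1)f_m z^m$, this sum is exactly the $n$-th coefficient of $\Delta F(z)\,G(z)$.

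\textbf{Step 3 (other directions).} For the left and right sums the same argument goes through word for word. It uses the ``same result holds for the two other directions'' clause of Lemma~\ref{lem:uniqueness of the decomposition}, together with the rotational symmetry of the sums, under which the shift again ranges over $\intval{0,|\beta|}$.

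The only real point of care is the injectivity, and Lemma~\ref{lem:uniqueness of the decomposition} supplies it exactly. The rest consists of checking the range of shifts and handling the empty-basis edge case consistently with the convention used in \eqref{eq:indiv eq}.
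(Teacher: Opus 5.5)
Your proposal is correct and follows the paper's proof. You count triples $(\beta,\eta,h)$ with $h\in\intval{0,|\beta|}$ to obtain $\Delta F(z)\,G(z)$, and you use Lemma~\ref{lem:uniqueness of the decomposition}, via the horizontal indivisibility of $\mathfrak{G}$, to get injectivity so that equality holds. Your explicit convention $\varnothing\vsum_0\eta=\eta$ for an empty element of $\mathfrak{F}$ is a harmless addition, consistent with how the paper later applies the lemma.
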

\begin{proof}
    To build a basis in $\mathfrak{F}\vsum\mathfrak{G}$, one should take a basis in $\mathfrak{F}$, one in $\mathfrak{G}$ and choose a shift between $0$ and the size of the first basis. This yields
    \[(F\vsum G)(z) \leqslant \sum_{n_1, n_2 \geqslant 0} (n_1+1)f_{n_1}z^{n_1}g_{n_2}z^{n_2} = \Delta F(z) \times G(z).\]
    
    Moreover, since all bases of $\mathfrak{G}$ are horizontally indivisible, according to Lemma~\ref{lem:uniqueness of the decomposition} two such sums never yield the same basis, so equality holds.
\end{proof}

\begin{proof}[Proof of Theorem~\ref{thm:TandI}]
    With Lemma~\ref{lem:Delta} in mind, \eqref{eq:indiv eq} is obtained by observing that a basis can either be empty, horizontally indivisible or admit a horizontal cut. In the last case, we choose the unique cut in which the second part $\eta_2$ is indivisible. This translates to the following equation of generating functions: 
    \[ T(z) = 1 + I(z) + I(z)(\Delta T(z) - 1) = 1+I(z)\Delta T(z).\]
    
    \smallskip
    
    The second equation \eqref{eq:inclus-exclus}, is obtained using the fact that any basis of size at least $2$ admits a cut (Theorem \ref{thm:sum}). There is only one basis of size $0$ and one of size $1$, so their contribution is $1+z$. Larger bases admit a cut and, by requiring that the second part of the cut is indivisible in the direction of the cut, we have a canonical choice for a cut in a given direction. This means that the number of bases admitting a cut in a given direction is $I(z)\Delta(T(z)-1)$, where the $-1$ corresponds to excluding the empty basis. Now, some bases may admit cuts in two different directions, or even in all three directions, so we need to use an inclusion-exclusion argument. 
    
    Let us count the number of bases admitting a cut in two different directions.  Let $\beta$ be a basis admitting a horizontal and a left cut and write $\beta = \beta_1 \vsum_h \beta_2 = \beta_1'\lsum_{h'} \beta_2'$ with $\beta_2, \beta_2'$ indivisible. We first prove that $\beta_2\cap\beta_2' = \varnothing$. 
    According to Lemma~\ref{lem:intersection}, both $(\beta_2\cap\beta_1')$ and $(\beta_2 \cap \beta_2')$ are subbases of $\beta_2$ and $\beta_2 = (\beta_2\cap\beta_1')\lsum_s\! (\beta_2 \cap \beta_2')$ for some $s \in \intval{0, h'}$. Similarly, we have $\beta_2' = (\beta_2'\cap\beta_1)\vsum_{s'}\! (\beta_2 \cap \beta_2')$. In fact, we must have either $s=0$ or $s' = |\beta_2'\cap\beta_1|$; otherwise $\varphi(\beta_1)$ and $\varphi(\beta_2)$ would intersect (see Figure~\ref{fig:proof eq2}), which is impossible since $(\beta_1, \beta_2)$ is a cut of the basis $\beta$.
    In particular one of the sums can be seen as either a horizontal or a left sum. Without loss of generality, assume that we can write $\beta_2' = (\beta_2\cap\beta_1')\lsum_0(\beta_2\cap\beta_2')= (\beta_2\cap\beta_1')\vsum_{|\beta_2\cap\beta_1'|}\!(\beta_2\cap\beta_2')$. Observe that $\beta_2\cap\beta_1'\neq\varnothing$: indeed, as $\beta_1'$ is a basis, it has a point on its bottom row, which is also the bottom row of $\beta$ (see Figure~\ref{fig:sums scheme}), so this point must belong to $\beta_2$. Therefore, since $\beta_2$ is horizontally indivisible, we must have $\beta_2\cap\beta_2'=\varnothing$. 
    
    Let us now go back to $\beta$. Any point of $\beta$ in the $|\beta_2|$ bottom lines of $\kT_{|\beta|}$ belongs to $\beta_2$, so $\beta_2'$ must lie above these lines, meaning that $h' \leqslant |\beta|-|\beta_2|$. The symmetric argument gives $h \geqslant |\beta_2'|$.  Hence, $h \in \intval{|\beta_2'|, |\beta_1|}$ and $h' \in \intval{0, |\beta_1'|-|\beta_2|}$.
    Then, according to Lemma~\ref{lem:intersection} (i), $\gamma \coloneqq \beta_1\cap\beta_1'$ is also a basis. Furthermore, since $\beta_2$ and $\beta_2'$ are disjoint, $\beta_1 = \gamma\lsum_{h'}\beta_2'$ and $\beta_1' = \gamma\vsum_{h-|\beta_2'|}\beta_2$, thus $\beta = (\gamma\vsum_{h-|\beta_2'|}\beta_2) \lsum_{h'}\beta_2' = (\gamma\lsum_{h'}\beta_2')\vsum_h\beta_2$. Moreover, any basis of the form $(\gamma_1\vsum_h\gamma_2)\lsum_{h'}\gamma_3$ admits both a horizontal and a left cut provided $0\leqslant h,h' \leqslant |\gamma_1|$. This means that bases admitting both an horizontal and a left cut are counted by $I(z)^2\Delta^2T(z)$ (here $\gamma = \varnothing$ is allowed). 
    \begin{figure}[ht]
    \centering
    \includegraphics[page=9, width=0.9\textwidth]{Figures/Triangle_Bases.pdf}
    \caption{A schematic representation of the bases $\beta_2$ and $\beta_2'$. Left: The general position, which is incompatible with $\beta = \beta_1\vsum_h\beta_2$. Right: The two real possibilities: either $s=0$ or $s'= |\beta_2'\cap\beta_1|$.}
    \label{fig:proof eq2}
    \end{figure}
    
    Using the same arguments, a basis admitting a cut in all three directions can be written uniquely as $\beta = ((\beta_0\vsum_{h_1}\beta_1)\lsum_{h_2}\beta_2)\rsum_{h_3}\beta_3$ with $\beta_1$, $\beta_2$ and $\beta_3$ respectively horizontal, left- and right- indivisible bases and $h_1, h_2, h_3-|\beta_1| \in \intval{0, |\beta_0|}$. The conditions on the shifts implies that the shifted sums commute (with potentially different shifts). It follows that bases admitting cuts in all the three directions are counted by $I(z)^3\Delta^3T(z)$. 
    
    Finally, \eqref{eq:inclus-exclus} follows from the inclusion-exclusion argument illustrated in Figure~\ref{fig:inclus-exclus}.
    \begin{figure}[ht]
    \centering
    \includegraphics[page=6, width=0.8\textwidth]{Figures/Triangle_Bases.pdf}
    \caption{A schematic representation of the inclusion exclusion leading to \eqref{eq:inclus-exclus}. The $3\times$ correspond to rotational symmetries.}
    \label{fig:inclus-exclus}
    \end{figure}
\end{proof}

Equations \eqref{eq:indiv eq} and \eqref{eq:inclus-exclus} for $T(z)$ are non linear differential equations. Therefore, the function $T(z)$ is D-algebraic, a class of functions on which very little is known. Our strategy to analyse the asymptotic behaviour of the numbers $t_{n}$ will be to instead study some subclasses of bases with simpler, namely D-finite, generating functions. Defining these subclasses will be the object of Section~\ref{sec:subclasses}. Before that, we dedicate the next subsection to explaining how one can empirically predict the asymptotic behaviour of the number of bases from \eqref{eq:indiv eq} and \eqref{eq:inclus-exclus}.

\subsection{Empirical analysis of the asymptotics}
\label{subsec:triangle_empirical}

In this subsection we give empirical evidence that the number $t_{n}$ of triangle bases of size $n$ behaves like
\[t_{n}\sim c n! e^{\sqrt{12n}}n^{5/12},\]
with $c\approx 0.00098107545$. 

Using Theorem~\ref{thm:D-algebraic}, we computed the first 500 coefficients $t_{n}$ of the series $T(z)$ (see accompanying SageMath notebook). Given the following bounds from \cite{SS23}:
\[3n!\leqslant t_n \leqslant c\left(\frac{e}2\right)^nn^{n -\frac52} \text{ with } c >0,\]
we expect the sequence $(\frac{t_{n}}{nt_{n-1}})_{n \geqslant1}$ to converge to some constant. The most classical behaviour for such ratios would be
\[\frac{t_{n}}{nt_{n-1}}=c_{0}+c_{1}n^{-1}+\cdots+c_{k}n^{-k}+O(n^{-k-1}),\]
however this does not seem to be our case. Instead we assume the next simplest possibility
\[\frac{t_{n}}{nt_{n-1}}=c_{0}+c_{1}n^{-1/2}+\cdots+c_{k}n^{-k/2}+O(n^{-(k+1)/2}),\]
for any integer $k>0$. Then our empirical method works as follows: Fix $k$ to be some positive integer. Then, for each $n$, consider $m\in\intval{n,n+k-1}$. For each such $m$ we expect the equation
\[\frac{t_{m}}{mt_{m-1}}\approx\sum_{j=0}^{k-1}c_{j}m^{-j/2}\]
to be approximately true. We then solve this system of equations for $c_{0},\ldots,c_{k-1}$ as though the equations were exact, using known, exact values of $t_{m}$. This yields approximations for 
$c_{0},\ldots,c_{k-1}$. Denote the approximation thus obtained for $c_{j}$ by $v_{n,j}$. Note that this is equivalent to writing $v_{n,j}$ as a weighted sum of the numbers $\frac{t_{m}}{mt_{m-1}}$, which cancels the terms $n^{-s/2}$ for $s<k$ and $s\neq j$. Fixing $k=14$ then plotting the values $v_{n,0}$, $v_{n,1}$ and $v_{n,2}$ against $n$ convincingly yields $c_{0}=1$, $c_{1}=\sqrt{3}$ and $c_{2}=23/12$, suggesting that
\[\frac{t_{n}}{nt_{n-1}}=1+\frac{\sqrt{3}}{\sqrt{n}}+\frac{23}{12n}+O(n^{-3/2}),\]
which implies
\[t_{n}\sim c n! e^{\sqrt{12n}}n^{5/12}\]
for some constant $c>0$. Applying a similar analysis as above to the ratios $\frac{t_{n}}{n! e^{\sqrt{12n}}n^{5/12}}$, we predict that $c\approx 0.00098107546$, where we believe the digits quoted are correct.

\section{Quasi-linearisation of the differential equation via subclasses of bases}
\label{sec:subclasses}

In this section, we consider several subclasses of triangle bases with simpler, namely D-finite, generating functions and write equations for them. The main goal is to derive the following linear differential equation relating the generating function $T(z)$ to another series $F(z)$, and prove that $F(z)$ is negligible compared to $T(z)$.

\begin{lemma}
\label{lem:quasi_D-finite}
    There is a series $F(z)$ satisfying the coefficientwise inequality \[F(z) \leqslant cz^3T(z)\] for some constant $c>0$, such that 
    \begin{equation}
    \label{eq:quasi_D-finite} 
    T(z)=1+z+ (z+z^2+5z^3)^3\Delta^{3} T(z) -3(z+z^2+5z^3)^2\Delta^2 T(z)+ 3(z+z^2+5z^3)(\Delta T(z) -1) +F(z).
    \end{equation}
\end{lemma}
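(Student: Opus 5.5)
The plan is to give the polynomial $P(z)\coloneqq z+z^2+5z^3$ a combinatorial meaning, so that $F(z)$ becomes the generating function of an explicit family of bases. First I would check from the definitions, using Theorem~\ref{thm:TandI} and the initial terms of $T$, that $P(z)$ is exactly the truncation of $I(z)$ to degree $3$. In other words, there are $1$, $1$ and $5$ horizontally indivisible bases of sizes $1,2,3$, and I would list them. Let $\mathfrak{G}_{\le 3}$ denote the family of indivisible bases of size at most $3$, with the direction understood from context.

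Next I would rerun the proof of Theorem~\ref{thm:TandI} with the family of all indivisible bases replaced by $\mathfrak{G}_{\le 3}$. Call a cut of $\beta$ in a given direction \emph{small} if its canonical indivisible second part, which is unique by Lemma~\ref{lem:uniqueness of the decomposition}, has size at most $3$. Lemma~\ref{lem:Delta} applies to any subfamily of indivisible bases. Hence the bases with a small cut in a given direction are counted by $P(z)(\Delta T(z)-1)$.

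For the inclusion--exclusion step I need the two-direction and three-direction analysis to restrict correctly. The structural description shows that a basis with canonical cuts in two directions can be written uniquely as $(\gamma\vsum_h\beta_2)\lsum_{h'}\beta_2'$, and the indivisible parts $\beta_2,\beta_2'$ in this description are exactly the canonical parts of the two cuts. So requiring both cuts to be small amounts to requiring $\beta_2,\beta_2'\in\mathfrak{G}_{\le3}$, which gives the count $P(z)^2\Delta^2T(z)$. The same argument gives $P(z)^3\Delta^3T(z)$ for three directions. Therefore
\[
1+z+3P(\Delta T-1)-3P^2\Delta^2T+P^3\Delta^3T
\]
counts the empty basis, the size-$1$ basis, and all bases of size at least $2$ that admit at least one small cut. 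By Theorem~\ref{thm:sum}, $F(z)$ is then the generating function of the set $\mathfrak{N}$ of bases of size at least $2$ with no small cut. In particular $F\geqslant 0$ coefficientwise.

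It remains to show $|\mathfrak{N}\cap\kB_n|\le c\,t_{n-3}$, and I expect this to be the main obstacle. A crude bound that fixes one cut direction and uses $(I-P)\Delta T$ is far too large, since $i_n$ is of order $t_n/\sqrt n$. The argument must therefore exploit the absence of small cuts in all three directions at once.

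My first attempt is a local analysis at a corner. A basis is a shifted sum, so look at the top corner of $\kT_n$ together with the smallest subbasis containing the top row of points. I would then iterate the decomposition towards the bottom-left and bottom-right corners. Condition $\mathfrak{N}$ forbids every indivisible piece of size at most $3$ from appearing as the second part of a cut in any direction. From this I would argue that near each corner the basis is forced into one of finitely many rigid patterns occupying a bounded number of rows. Removing a suitable pattern of $3$ points, in the spirit of taking a cut in reverse, should then yield a basis of size $n-3$, and each basis should be hit by at most $c$ elements of $\mathfrak{N}$, because the pattern and its shift are determined up to $\BigO(1)$ choices.

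If this corner analysis gets out of hand, I would fall back on a weaker statement that still suffices. The idea is to bound $F$ by a finite combination of terms $z^{k}\Delta^{j}(\cdots)$ in which every factor $z^k$ has $k\ge j+3$, using $\Delta^j T\approx n^j t_n$. One would then check that $n^{j}t_{n-k}=\BigO(t_{n-3})$ from the known bound $t_n\ge 3\,n!$ combined with the monotonicity $t_n\ge n t_{n-1}$. That monotonicity follows from \eqref{eq:indiv eq}, since $I(z)\geqslant z$ coefficientwise.
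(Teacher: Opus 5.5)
Your derivation of the identity is correct, and it takes a slightly more direct route than the paper. Comparing coefficients in \eqref{eq:indiv eq} does give $[z^{\leqslant 3}]I(z)=z+z^2+5z^3$. Running the inclusion--exclusion of Theorem~\ref{thm:TandI} on small canonical cuts then identifies $F$ with the generating function of bases having no cut whose second part has size at most $3$. These are exactly the $3$-irreducible bases, since the canonical cut in a direction is the one with minimal second part, and every basis of size $2$ or $3$ has a small cut. The paper reaches the same $F=F_3$ through $k$-cores (Lemma~\ref{lem:core}), machinery it needs later for $\ell_n$ anyway; your route avoids it for this step.

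\textbf{The gap is the inequality $F\leqslant cz^3T$.} This is the real content of the lemma, and you leave it as a plan.

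\emph{Corner analysis.} It rests on an unproved and doubtful claim: that near each corner a $3$-irreducible basis is one of finitely many rigid patterns. In the canonical horizontal cut $\beta=\beta_1\vsum_h\beta_2$, the top part $\beta_1$ can be a basis of arbitrary size $\geqslant 3$ with no fixed structure, so no finite list of corner patterns should be expected. Moreover, deleting three points from a basis does not in general leave a basis.

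\emph{Fallback.} It does not say which finite combination of terms $z^k\Delta^j(\cdots)$ dominates $F$. More importantly, the only estimates you invoke are the lower bounds $t_n\geqslant 3n!$ and $t_n\geqslant nt_{n-1}$. $F$ contains genuine convolutions $t_kt_{n-k}$ with $k\approx n/2$, and these lower bounds cannot control them. Comparing $n\,t_{\lfloor n/2\rfloor}^2$ with $t_{n-3}$ requires an \emph{upper} bound on $t_m/m!$ of order at most about $4^m$, because $(n-3)!/(\lfloor n/2\rfloor!)^2$ is only about $2^n$ up to polynomial factors.

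\textbf{What the paper does.}
\begin{enumerate}
\item By rotation invariance, $f_n\leqslant 3h_n$, where $h_n$ counts $3$-irreducible bases admitting a horizontal cut.
\item In the canonical horizontal cut, the lower part has size $\geqslant 4$, since there is no small cut. The upper part has size $\geqslant 3$, because an upper part of size $1$ or $2$ yields a left or right cut with second part of size $\leqslant 2$. This is the only information needed from the other two directions, and it removes exactly the terms $k\leqslant 2$ that made your one-direction bound with $(I-P)\Delta T$ too large.
\item With $I\leqslant T$ coefficientwise, this gives $h_n\leqslant\sum_{k=3}^{n-4}(k+1)t_kt_{n-k}$.
\item The $k=3$ term is $4t_3t_{n-3}$. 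For $k\geqslant 4$, pair $k$ with $n-k$, then use $t_{n-k}\leqslant\frac{(n-k)!}{(n-3)!}t_{n-3}$ together with the a priori bound $t_k\leqslant ab^kk!$, $b=e^2/2<4$, from \cite{SS23}.
\item The resulting weights $d_k$ first decrease and then increase in $k$. One has $d_4=O(1)$ and $d_5=O(1/n)$, while $d_{\lfloor n/2\rfloor}$ is exponentially small because $b<4$. Hence the whole sum is $O(t_{n-3})$.
\end{enumerate}

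Two ingredients are missing from your proposal: the upper part having size at least $3$, and an upper bound on $t_n$ with exponential rate below $4$ to control the central terms.
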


\subsection{Reducible bases}

Let us introduce a family of subclasses, $k$-reducible bases, which admit a simpler decomposition equation. 

\begin{defin}
For fixed $k>0$, a basis is \emph{$k$-reducible} if it is empty, has size at most $k$ or admits a cut into $(\beta_1, \beta_2)$ such that the second part, $\beta_2$, has size at most $k$, and the first part, $\beta_1$, is $k$-reducible (see Figure \ref{fig:irred ex}). In other words, a basis is $k$-reducible if it can be recursively decomposed as shifted sums of bases of sizes at most $k$. Observe that if a basis is $k$-reducible, then it is also $k'$-reducible for all $k'>k$.
\end{defin}

\begin{figure}[ht]
    \centering
    \includegraphics[height=2.5cm, page=7]{Figures/Triangle_Bases.pdf}
    \caption{A basis that is $2$-reducible but also $1$-irreducible. The contours correspond to a possible decomposition into bases of sizes at most $2$.}
    \label{fig:irred ex}
\end{figure}

We denote by $R_k(z)$ the generating function of $k$-reducible bases. By mimicking the proof of Theorem \ref{thm:TandI}, we obtain the following characterisation of $R_k(z)$.
\begin{prop}
\label{prop:k-reducible_equation}
    Denote by $I_{k}(z)$ the polynomial counting horizontally indivisible bases of size at most $k$, that is $I_{k}(z)=[z^{\leqslant k}]I(z)$. The generating function $R_{k}(z)$ of $k$-reducible bases is the unique power series satisfying 
    \begin{equation}
    \label{eq:inclus-exclus_k}
        R_{k}(z) = 1 + z + 3I_{k}(z)(\Delta R_{k}(z)-1)-3I_{k}(z)^2\Delta^2R_{k}(z) + I_{k}(z)^3\Delta^3R_{k}(z).
    \end{equation}
\end{prop}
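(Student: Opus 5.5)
We mimic the proof of Theorem~\ref{thm:TandI}, replacing the class of all bases by the class $\mathcal{R}_k$ of $k$-reducible bases and the indivisible second parts by indivisible bases of size at most $k$. Everything rests on one structural claim: \emph{a basis $\beta$ of size at least $2$ is $k$-reducible if and only if, for some direction, its canonical cut in that direction (the unique cut whose second part $\beta_2$ is indivisible in that direction, given by Lemma~\ref{lem:uniqueness of the decomposition}) satisfies $|\beta_2|\leqslant k$ and $\beta_1$ is $k$-reducible.} We then redo the inclusion--exclusion of Theorem~\ref{thm:TandI} with this claim.

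The reverse implication of the claim is the definition of $k$-reducibility. For the forward implication, let $\beta=\beta_1'\usum_h\beta_2'$ be a cut witnessing $k$-reducibility, so $|\beta_2'|\leqslant k$ and $\beta_1'$ is $k$-reducible. Let $(\beta_1,\beta_2)$ be the canonical cut of $\beta$ in the direction $\usum$. Arguing as in the proof of Lemma~\ref{lem:uniqueness of the decomposition}, $\beta_2\subseteq\beta_2'$, so $|\beta_2|\leqslant k$. By Lemma~\ref{lem:intersection}, $\beta_2'=(\beta_2'\cap\beta_1)\usum_{s}\beta_2$ and $\beta_1=\beta_1'\usum_{s'}(\beta_2'\cap\beta_1)$. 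The piece $\beta_2'\cap\beta_1$ is a subbasis of size at most $k$, so $\beta_1$ is $k$-reducible.

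The step I expect to be delicate is the general fact used here: $k$-reducibility is inherited by the relevant subbases. In particular, if $\gamma$ is $k$-reducible and $|\delta|\leqslant k$, then every sum $\gamma\usum\delta$ is $k$-reducible. This follows from the definition, since $\delta$ is the second part and $\gamma$ is $k$-reducible, so the claim above goes through. With the claim, Lemma~\ref{lem:Delta} (with $\mathfrak{F}=\mathcal{R}_k$ and $\mathfrak{G}$ the indivisible bases of size $\leqslant k$) shows that the $k$-reducible bases with a canonical cut in a fixed direction with small second part are counted by $I_k(z)(\Delta R_k(z)-1)$.

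For the intersections, the two-direction argument of Theorem~\ref{thm:TandI} writes $\beta=(\gamma\vsum\beta_2)\lsum\beta_2'$ with $\gamma=\beta_1\cap\beta_1'$. We must check that $\beta$ is $k$-reducible with both canonical second parts of size $\leqslant k$ exactly when $\gamma$ is $k$-reducible and $|\beta_2|,|\beta_2'|\leqslant k$. One direction holds because $\beta_1=\gamma\lsum\beta_2'$ is $k$-reducible and admits the canonical cut $(\gamma,\beta_2')$, so $\gamma$ is $k$-reducible by the claim. The other direction is the closure property of the previous paragraph. This gives $I_k^2\Delta^2R_k$, and the three-direction case gives $I_k^3\Delta^3R_k$ in the same way. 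Adding $1+z$ for the sizes $0$ and $1$ yields \eqref{eq:inclus-exclus_k}.

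Uniqueness follows by induction on coefficients. Since $I_k(z)=z+O(z^2)$ has no constant term, the coefficient $[z^n]$ of the right-hand side involves only coefficients of $R_k$ of index $<n$ (up to $n-1$, via the factor $I_k$). Hence \eqref{eq:inclus-exclus_k} determines $R_k$ recursively.
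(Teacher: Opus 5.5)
Your overall route is the paper's (it only says to mimic the proof of Theorem~\ref{thm:TandI}), and several steps are sound. You correctly replace a witnessing cut in direction $\usum$ by the canonical $\usum$-cut, which gives the count $I_k(z)(\Delta R_k(z)-1)$ for the set $A_{\usum}$ of bases whose canonical $\usum$-cut has second part of size at most $k$ and $k$-reducible first part. The three sets $A_{\usum}$ do cover the $k$-reducible bases of size at least $2$, and your uniqueness argument is fine.

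\textbf{The gap is in the intersection terms.} To count $A_{\vsum}\cap A_{\lsum}$ by $I_k^2\Delta^2R_k$, you need $\gamma=\beta_1\cap\beta_1'$ to be $k$-reducible whenever $\beta\in A_{\vsum}\cap A_{\lsum}$. You deduce this from your claim applied to $\beta_1=\gamma\lsum\beta_2'$. But the claim only says that the canonical cut of $\beta_1$ in \emph{some} direction witnesses its reducibility. Suppose $\beta_1$ is $k$-reducible only through a horizontal or right cut. Then nothing you have proved rules out $\gamma$ failing to be $k$-reducible, and in that case $\beta$ lies in $A_{\vsum}\cap A_{\lsum}$ without being counted by $I_k^2\Delta^2R_k$.

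The paragraph you flag as delicate proves only the trivial upward closure: if $\gamma$ is $k$-reducible and $|\delta|\leqslant k$, then $\gamma\usum\delta$ is $k$-reducible. What you actually need is the downward statement: \emph{if $\beta$ is $k$-reducible and $\beta=\gamma\usum\delta$ in any direction with $|\delta|\leqslant k$, then $\gamma$ is $k$-reducible.} The triple intersection needs the same fact.

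This statement is true, but it needs its own induction on $|\beta|$.
\begin{itemize}
\item If $|\beta|\leqslant k$ it is trivial.
\item Otherwise take a witnessing cut $\beta=\rho\usum'\sigma$ with $\rho$ $k$-reducible and $|\sigma|\leqslant k$. By Lemma~\ref{lem:intersection}~(ii), $\rho=(\rho\cap\gamma)\usum(\rho\cap\delta)$ and $\gamma=(\rho\cap\gamma)\usum'(\sigma\cap\gamma)$.
\item Apply the induction hypothesis to the smaller basis $\rho$, using $|\rho\cap\delta|\leqslant k$: this shows $\rho\cap\gamma$ is $k$-reducible (immediately if one of the intersections is empty).
\item Then $\gamma$ is $k$-reducible by definition, since $|\sigma\cap\gamma|\leqslant k$.
\end{itemize}
This is exactly the well-definedness of the $k$-core, Lemma~\ref{lem:core}~(b)--(c), which the paper proves only after this proposition. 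Once you insert this lemma before the inclusion--exclusion, your argument for both the double and triple intersections goes through.
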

For example, the generating function $R_{1}(z)$ for 1-reducible bases satisfies
\[R_{1}(z)=1+z + 3z(\Delta R_{1}(z) -1)  -3z^2\Delta^2R_{1}(z) + z^3\Delta^{3} R_{1}(z).\]
Observe that while $T(z)$ is D-algebraic, $R_k(z)$ is D-finite, so it will be easier to study. Furthermore, since $k$-reducible bases form a subset of all bases, for all $k>0$ we have the coefficientwise inequality  $R_{k}(z)\leqslant T(z)$. 

\begin{rem}
Doing the same empirical analysis as in Section \ref{subsec:triangle_empirical}, it appears that the numbers of $1$-reducible bases of size $n$, $r^{(1)}_n$, satisfy 
\[\frac{n r^{(1)}_{n}}{t_{n}}\to c_1>0 \text{ with } c_1 \approx 7.0010417.\]
To improve this approximation, we consider 2-reducible bases. 
With still the same empirical analysis as before, it appears that the coefficients $r^{(2)}_n$ of $R_2(z)$ satisfy
\[\frac{r^{(2)}_{n}}{t_{n}}\to c_2>0 \text{ with } c_2\approx 0.568789102.\]
Increasing further the value of $k$ increases the constant toward $1$. 

A first strategy we considered was to prove that these ratios converge to a constant and analyse the asymptotic behaviour of $(r^{(2)}_n)_n$, which should be easier as $R_2(z)$ is D-finite. However, both steps turned out to be quite hard. In particular, the equation for $R_2(z)$ presents an irregular dominant singularity, which is at the origin of the stretched exponential asymptotic form but makes the automatic methods of analysis fail. 
\end{rem}

\subsection{Cores}

We now introduce the notion of \emph{$k$-core} of a basis, which can be seen as the part of the basis which is not $k$-reducible. This will allow us to write an equation similar to \eqref{eq:inclus-exclus_k} for $T(z)$.

\begin{defin}
For fixed $k>0$, a triangle basis $\beta$ is \emph{$k$-irreducible} if it has size greater than $k$ and does not admit any cut into a pair $(\beta_1, \beta_2)$, where the second part $\beta_2$ has size at most $k$ (see Figure~\ref{fig:irred ex}). 
\end{defin}


\begin{lemma}
\label{lem:core}
For each basis $\beta$, there is a unique well defined basis, called the $k$-core of $\beta$, with the following properties:
\begin{itemize}
    \item [(a)] If $\beta$ is $k$-irreducible then its $k$-core is $\beta$ itself.
    \item [(b)] If $\beta$ is $k$-reducible then its $k$-core is the empty basis.
   \item [(c)] If $\beta$ admits a cut into a pair $(\beta_1, \beta_2)$ with $\beta_2$ of size at most $k$, then the $k$-core of $\beta$ is the same as the $k$-core of $\beta_1$.
\end{itemize}
\end{lemma}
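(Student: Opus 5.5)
We define the $k$-core by strong induction on $|\beta|$, using (a)--(c) as the definition, and show that it is well defined; uniqueness then follows automatically, since (a)--(c) leave no freedom. Note that every basis falls into at least one case. If $\beta$ is not $k$-irreducible, then either $|\beta|\leqslant k$, and $\beta$ is $k$-reducible, or $\beta$ admits a cut $(\beta_1,\beta_2)$ with $|\beta_2|\leqslant k$. The cases can overlap, so three consistency checks are needed. Cases (a) and (b) are disjoint: a $k$-irreducible basis has size $>k$ and admits no small cut, so it is not $k$-reducible. Case (a) is disjoint from (c) by definition. So it remains to show two things: (c) gives the same answer for all admissible cuts, and (c) agrees with (b) when $\beta$ is $k$-reducible.

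\textbf{Key claim (diamond property).} Suppose $\beta=\beta_1\usum_h\beta_2=\beta_1'\usum'_{h'}\beta_2'$ with $|\beta_2|,|\beta_2'|\leqslant k$ and $\beta_1\neq\beta_1'$. Then there is a basis $\gamma$ that admits a cut with a second part of size at most $k$ from both $\beta_1$ and $\beta_1'$. To prove this, take $\gamma=\beta_1\cap\beta_1'$, which is a subbasis by Lemma~\ref{lem:intersection}(i). Lemma~\ref{lem:intersection}(ii), applied to the subbasis $\beta_1$ of $\beta=\beta_1'\usum'\beta_2'$, gives
\[
\beta_1=(\beta_1\cap\beta_1')\,\usum'_{s}\,(\beta_1\cap\beta_2').
\]
Here $|\beta_1\cap\beta_2'|\leqslant|\beta_2'|\leqslant k$. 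If $\beta_1\cap\beta_2'$ is empty, then $\beta_1=\gamma$ directly. If $\gamma$ is empty, then $\beta_1$ has size at most $k$, hence is $k$-reducible. The symmetric statement holds for $\beta_1'$. When the two cuts are in different directions, this is exactly the configuration analysed in the proof of Theorem~\ref{thm:TandI}, and Lemma~\ref{lem:intersection}(ii) covers it in the same way.

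\textbf{Consistency of (c).} Given the key claim, induction on size gives
\[
\mathrm{core}(\beta_1)=\mathrm{core}(\gamma)=\mathrm{core}(\beta_1').
\]
The induction hypothesis applies because $\beta_1$ and $\beta_1'$ are strictly smaller than $\beta$, and $\gamma$ is obtained from each of them by a cut with a small second part, or equals one of them. The degenerate cases are handled by (b): if $\gamma$ is empty, then $\beta_1$ and $\beta_1'$ are $k$-reducible.

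\textbf{Agreement of (b) and (c).} If $\beta$ is $k$-reducible and $|\beta|\leqslant k$, then for any small cut $\beta_1$ is a subbasis of size $<k$, so it is $k$-reducible with core $\varnothing$ by induction. Otherwise, $k$-reducibility provides a cut $(\beta_1',\beta_2')$ with $\beta_1'$ $k$-reducible, so $\mathrm{core}(\beta_1')=\varnothing$. By the consistency of (c), every other small cut $(\beta_1,\beta_2)$ then satisfies $\mathrm{core}(\beta_1)=\mathrm{core}(\beta_1')=\varnothing$. As a by-product, every such $\beta_1$ is $k$-reducible. This follows by a parallel induction showing that a basis is $k$-reducible if and only if its core is empty; the direction "core empty implies reducible" holds by unfolding the definition.

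The main obstacle is the key claim when the two cuts are in different directions. There one must check carefully, using Lemma~\ref{lem:intersection}(ii) and the non-overlap argument of Figure~\ref{fig:proof eq2}, that $\beta_1\cap\beta_2'$ really is the second part of a cut of $\beta_1$, and not a first part.
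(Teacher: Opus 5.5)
Your proposal is correct and follows essentially the same route as the paper. Both use strong induction on $|\beta|$: for two small cuts $(\beta_1,\beta_2)$ and $(\beta_1',\beta_2')$, Lemma~\ref{lem:intersection}(ii) gives $\beta_1=(\beta_1\cap\beta_1')\usum'_s(\beta_1\cap\beta_2')$, and symmetrically for $\beta_1'$, so both cores equal the core of $\beta_1\cap\beta_1'$ by induction. You are in fact more careful than the paper about empty intersections and about checking that (b) is compatible with (c). The ``main obstacle'' you flag at the end is not one, since Lemma~\ref{lem:intersection}(ii) as stated already fixes the cut of $\beta_1$ to be in the direction of $\usum'$ with $\beta_1\cap\beta_2'$ as its second part, whatever the direction of the other cut.
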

\begin{proof}
We will show this result by induction on the size $|\beta|$ of the basis $\beta$. For $|\beta|<k$, the basis $\beta$ is necessarily $k$-reducible, so its $k$-core is the empty basis. Note then that if $\beta$ admits a cut into a pair $(\beta_1,\beta_2)$ then (c) also holds as the $k$-core of $\beta_1$ is also the empty basis. 

For the inductive step, assume that $|\beta|=n$ and that the statement of the lemma holds for all bases of size at most $n-1$. If $|\beta|$ is $k$-irreducible, then its $k$-core is itself and the result holds. Otherwise, $\beta$ admits some cut into $(\beta_1, \beta_2)$, with $\beta_{2}$ of size at most $k$, and we can set the $k$-core of $\beta$ to be the same as the $k$-core of $\beta_{1}$. It remains to show that if $\beta$ admits any other cut into a pair $(\beta_1',\beta_2')$ with $|\beta_2'| \leq k$, the $k$-core of $\beta_{1}'$ is the same as the $k$-core of $\beta_1$. By Lemma~\ref{lem:intersection}, the pair $((\beta_{1}\cap\beta_{1}'), (\beta_{1}\cap\beta_{2}'))$ is a cut of $\beta_{1}$ for some $s \geq 0$.
 Then, by the induction hypothesis, $\beta_{1}$ satisfies (c), so the $k$-core of $\beta_1$ is the $k$-core of $\beta_1\cap\beta_1'$. Symmetrically the $k$-core of $\beta_1'$ is also the $k$-core of $\beta_1\cap\beta_1'$. This completes the induction and hence the proof of the lemma.  
\end{proof}

The $k$-core of a basis can be seen as the $k$-irreducible basis that remains after having recursively cut as many parts of size at most $k$ as possible. Observe that a $k$-core has size either $0$ or strictly greater than $k$, and that a basis has a $k$-core of size $0$ if and only if it is $k$-reducible.
\begin{figure}[ht]
    \centering
    \includegraphics[page=8, height=3.5cm]{Figures/Triangle_Bases.pdf}
    \caption{A basis with a $1$-core of size $4$. The black points are the $1$-core, and the gray ones the ``$1$-reducible part''.}
    \label{fig:core}
\end{figure}

For $k > 0$, let $F_{k}(z)$ denote the generating function for $k$-irreducible bases and let $f_{k, m}\coloneqq[z^{m}]F_{k}(z).$ Let also $T_{k, m}(z)$ be the generating function of bases with $k$-core of size $m$. Then we have the following equations:
\begin{equation}\label{eq:T_k_reducibility}
    T(z)=R_{k}(z)+\sum_{m=k+1}^{\infty}T_{k, m}(z),
\end{equation}
and for $k, m>0$, with the same inclusion-exclusion argument as for \eqref{eq:inclus-exclus} and \eqref{eq:inclus-exclus_k}, we have
\begin{equation}
    \label{eq:inclus-exclus_k_m}
        T_{k, m}(z) = f_{k, m}z^{m} + 3I_{k}(z)\Delta T_{k, m}(z)-3I_{k}(z)^2\Delta^2T_{k, m}(z) + I_{k}(z)^3\Delta^3T_{k, m}(z).
    \end{equation}
Combining \eqref{eq:inclus-exclus_k}, \eqref{eq:T_k_reducibility} and \eqref{eq:inclus-exclus_k_m} yields
\begin{equation}
    \label{eq:inclus-exclus_Fk}
        T(z) = 1+z+F_{k}(z) + 3I_{k}(z)(\Delta T(z)-1)-3I_{k}(z)^2\Delta^2T(z) + I_{k}(z)^3\Delta^3T(z)
    \end{equation}
for any $k>0$.

We now work with \eqref{eq:inclus-exclus_Fk} for a small fixed value of $k$, still large enough so that the coefficients of $F_k(z)$ are significantly smaller than those of $T(z)$. It turns out that $k=3$ is the smallest value that works.

Consider $3$-irreducible bases and their generating function $F_3(z)$. To lighten the notation, we drop the index, i.e. we denote $F(z) \coloneqq F_3(z)$ and $f_n \coloneqq f_{3, n}$. By specializing \eqref{eq:inclus-exclus_Fk} to $k=3$ we obtain that $T$ and $F$ satisfy \eqref{eq:quasi_D-finite}. The following lemma completes the proof of Lemma~\ref{lem:quasi_D-finite}.

\begin{lemma}
\label{lem_boundFT} 
    There is some constant $c>0$, such that for all $n \geqslant 3$, the number of triangle bases of size $n-3$ and the number of $3$-irreducible bases of size $n$ are related by the inequality
    \[f_{n}\leqslant c t_{n-3}.\] 
    In other words, we have the coefficientwise inequality 
    \[F(z)\leqslant c z^3 T(z).\]
\end{lemma}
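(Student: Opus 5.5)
We aim to bound the number of $3$-irreducible bases of size $n$ by $c\,t_{n-3}$. We do this by building an injection-with-bounded-multiplicity from $3$-irreducible bases of size $n$ into bases of size $n-3$, up to a constant number of extra choices.

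The starting point is Theorem~\ref{thm:sum}. A $3$-irreducible basis $\beta$ of size $n\geqslant 4$ admits a cut $\beta=\beta_1\usum_h\beta_2$. By definition every such cut has $|\beta_2|\geqslant 4$. Applying Theorem~\ref{thm:sum} again to the first part, and using the rotational symmetry, every part of every cut of $\beta$ has size at least $4$ on the side placed in a corner. Iterating the decomposition therefore gives a binary tree of shifted sums whose leaves are bases of size $1$. In this tree, every ``second child'' subtree has at least $4$ points.

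\textbf{Step 1: remove three points.} Descend from the root along the right spine, or more cleverly choose the decomposition with smallest second part $\beta_2$. Write $\beta_2=\gamma\usum'\delta$ and recurse until reaching a sub-basis $\alpha$ of size $4$ to $7$ sitting as a second part somewhere in the tree. Replace $\alpha$ by a basis $\alpha'$ of size $|\alpha|-3$, for instance by deleting three well-chosen points. It is simpler to delete the top three points of its corner, or to replace $\alpha$ by the sum of its own pieces after removing a size-$3$ sub-basis. Adjust the shifts so the result is a valid basis of size $n-3$; this uses Theorem~\ref{thm:sum} in the reverse direction, since a shifted sum of bases is a basis.

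\textbf{Step 2: bound the multiplicity.} We must show that each basis $\beta'$ of size $n-3$ arises from at most $c$ bases $\beta$. Reconstruction needs three pieces of data: the removed pattern (boundedly many, since $|\alpha|\leqslant 7$), the shifts, and the location where $\alpha'$ sits in $\beta'$. The location is the dangerous part. A naive choice gives $O(n)$ positions and only a bound $f_n\leqslant cn\,t_{n-3}$, which is too weak. To fix this, we choose $\alpha$ canonically near a corner, e.g.\ within the minimal second part of the horizontal cut with indivisible second part (Lemma~\ref{lem:uniqueness of the decomposition}). We then argue, using Lemma~\ref{lem:intersection}, that its image in $\beta'$ is recoverable as the analogous canonical piece of $\beta'$. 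The shift $h$ of the top-level cut must also be encoded. Here $3$-irreducibility should help: the bottom pieces cannot be peeled off with small parts, which should force the shift to be determined up to $O(1)$ choices.

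\textbf{Main obstacle.} The main obstacle is this last point: controlling the shift parameters, which range over $\intval{0,n}$, so as to lose only a constant factor. If the direct injection fails, the fallback is an analytic route. We would write the $3$-irreducible bases via \eqref{eq:inclus-exclus_Fk} with $k=3$ versus $k$ large, and compare coefficient growth using the a priori bound $t_{n}\geqslant n\,t_{n-1}$. That bound follows from $T=1+I\Delta T$ with $I\geqslant z$, and gives $\sum_{j\geqslant 0} (n)^{\underline{j}}\,t_{n-j}$-type sums dominated by their first term. Any polynomial loss in $n$ on pieces of size $j\geqslant 4$ would then be absorbed, since $t_{n-3}\geqslant (n-3)^{j-3}\,t_{n-j}$ roughly.
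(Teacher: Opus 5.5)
There is a genuine gap: your proposal never actually establishes the bound. The main route, an injection with bounded multiplicity into bases of size $n-3$, stops at the decisive step. You give no argument that the position of $\alpha'$ inside $\beta'$, and the re-adjusted shifts of all the sums enclosing it, can be recovered from $\beta'$ with $O(1)$ extra information. The phrases ``$3$-irreducibility should help'' and ``should force the shift to be determined up to $O(1)$ choices'' are the entire content of Step~2, and it is not clear that such an injection exists at all. Step~1 also relies on unproven structure: $3$-irreducibility constrains only the cuts of $\beta$ itself, not those of $\beta_1$, so second children deeper in your tree need not have size $\geqslant 4$.

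The analytic fallback, as described, would not work either, for two reasons. First, any bound of $f_n$ by a sum over cuts includes corner pieces of size $1$ or $2$ unless you rule them out. These contribute terms like $2t_1t_{n-1}$ and $3t_2t_{n-2}$, which are not $O(t_{n-3})$. Second, the lower bound $t_n\geqslant nt_{n-1}$ only controls convolution terms in which one piece has bounded size. The central terms $t_kt_{n-k}$ with $k\approx n/2$ need an \emph{upper} bound on $t_k$, which you never invoke.

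No injection into size $n-3$ is needed; one canonical cut plus a convolution estimate suffices. By rotational symmetry, $f_n\leqslant 3|\mathfrak{H}_n|$, where $\mathfrak{H}_n$ is the set of $3$-irreducible bases of size $n$ admitting a horizontal cut. Taking the horizontal cut whose lower part is horizontally indivisible (unique by Lemma~\ref{lem:uniqueness of the decomposition}) injects $\mathfrak{H}_n$ into triples $(\beta_1,h,\beta_2)$ with $h\in\intval{0,|\beta_1|}$. Here $|\beta_2|\geqslant 4$ by definition. Also $|\beta_1|\geqslant 3$, since a top corner piece of size $1$ or $2$ would yield a left or right cut whose second part has size $1$ or $2$; this is the key combinatorial input your proposal lacks. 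Hence
$|\mathfrak{H}_n|\leqslant\sum_{k=3}^{n-4}(k+1)t_kt_{n-k}$.

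The shift factor that worried you is harmless in this count. It equals $k+1$, where $k$ is the size of the upper piece, so it is $4$ at $k=3$, and near $k=n-4$ it is absorbed by $(k+1)t_k\leqslant t_{k+1}$. The sum is then $O(t_{n-3})$ using $t_{n-k}\leqslant\frac{(n-k)!}{(n-3)!}t_{n-3}$ together with the upper bound $t_k\leqslant ab^kk!$, $b=e^2/2<4$, from \cite{SS23}. With these, the central terms are exponentially small and the intermediate ones are $O(1/n)$ each.
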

\begin{proof}
Denote by $\mathfrak{F}_n$ the set of $3$-irreducible triangle bases of size $n$ (then $f_n = |\mathfrak{F}_n|$) and denote by $\mathfrak{H}_n$ the subset of $\mathfrak{F}_n$ of bases that admit a horizontal cut. 
Since we know that a triangle basis admits a cut in at least one of the three directions and $\mathfrak{F}$ is stable under rotation, we have 
\begin{equation}
\label{eq:f<=3h}
|\mathfrak{F}_{n}|\leqslant 3|\mathfrak{H}_{n}|.
\end{equation}
By definition, a given element $\beta$ of $\mathfrak{H}_{n}$ admits a horizontal cut, and the lower part has size at least $4$ (and can be chosen to be horizontally indivisible). In fact, the upper part of $\beta$ must also have size at least $3$, otherwise $\beta$ would admit a right or left cut $(\beta_1, \beta_2)$ with $\beta_2$ of size $1$ or $2$. Converting this to a counting argument with Lemma~\ref{lem:Delta} and using $I(z) \leqslant T(z)$ (coefficientwise), we obtain
\[|\mathfrak{H}_{n}|\leqslant \sum_{k=3}^{n-4}(k+1)t_{k}t_{n-k}.\]
We now proceed algebraically to bound the right-hand term. By separating the term for $k=3$ and grouping the terms for $k$ and $n-k$ for $k \geqslant 4$, we have
\[\sum_{k=3}^{n-4}(k+1)t_{k}t_{n-k}\leqslant 4t_{3}t_{n-3}+ \sum_{k=4}^{\lfloor\frac{n}{2}\rfloor}(n+2)t_{k}t_{n-k}.\]
Then, we bound $t_k$ and $t_{n-k}$ with two fairly weak inequalities: $t_{n}\geqslant nt_{n-1}$, obtained by counting bases with exactly one point on the bottom line, and $t_{n}\leqslant a\cdot b^{n} n!$, for some constant $a>0$ and $b=e^2/2<4$, from \cite[Theorem 5]{SS23}. This yields 
\[t_{n-k} \leqslant \frac{(n-k)!}{(n-3)!}t_{n-3} \text{ and } t_k \leqslant ab^kk!.\]
Hence, writing
\[d_{k}:=(n+2)ab^k k! \frac{(n-k)!}{(n-3)!}\geq (n+2)\frac{t_{k}t_{n-k}}{t_{n-3}},\]
we obtain
\[|\mathfrak{H}_{n}|\leqslant\sum_{k=3}^{n-4}(k+1)t_{k}t_{n-k}\leqslant 4t_{3}t_{n-3}+t_{n-3}\sum_{k=4}^{\lfloor\frac{n}{2}\rfloor}d_{k}.\]
Now, the ratio $d_{k+1}/d_k=b(k+1)/(n-k) \leqslant 1$ if and only if $k \leqslant (n-b)/(b+1)$. Hence, the sequence $(d_k)_{1 \leqslant k \leqslant \lfloor \frac{n}{2} \rfloor}$ decreases then increases, so we have $\max_{5 \leqslant k \leqslant \lfloor \frac{n}{2} \rfloor} d_k = \max(d_5,d_{\lfloor \frac{n}{2} \rfloor})$ (we will consider $d_4$ separately, as it is too large for this argument).

Let us now prove that, for $n$ large enough, $d_5 \geqslant d_{\lfloor \frac{n}{2} \rfloor}$. To estimate $d_{\lfloor \frac{n}{2} \rfloor}$, we use Stirling's formula. There is a constant $C>0$ such that, for all $n \geqslant 1$:
\[\frac{\lfloor\frac{n}{2}\rfloor!(n-\lfloor\frac{n}{2}\rfloor)!}{n!} \leqslant C\sqrt{\frac{n^2}{n}}\frac{(n/2)^{n/2}(n/2)^{n/2}}{n^n} = C\frac{\sqrt{n}}{4^{n/2}}.\]
Therefore, we have on the one hand
\[d_{\lfloor \frac{n}{2} \rfloor} := (n+2)a\cdot b^{\lfloor\frac{n}{2}\rfloor} \frac{\lfloor\frac{n}{2}\rfloor!(n-\lfloor\frac{n}{2}\rfloor)!}{(n-3)!} \leqslant aC(n-2)(n-1)n(n+2)\sqrt{n}\left(\!\frac{b}{4}\!\right)^{\frac{n}2}\]
which goes to $0$ exponentially fast as $n$ increases since $b<4$.
And on the other hand:
\begin{align*}
d_5 := (n+2) a b^5 5! \frac{(n-5)!}{(n-3)!} = a b^5 5! \frac{n+2}{(n-3)(n-4)} \geqslant d_{\lfloor \frac{n}{2} \rfloor}
\end{align*}
for $n$ large enough.
Therefore, for $n$ large enough we have 
\begin{align*}
|\mathfrak{H}_{n}| &\leqslant 4t_{3}t_{n-3}+\sum_{k=4}^{\lfloor\frac{n}{2}\rfloor}d_{k}t_{n-3} \\
&\leqslant 4t_{3}t_{n-3} +d_{4}t_{n-3} + \left(\left\lfloor\frac{n}{2}\right\rfloor-4\right)d_{5}t_{n-3},\\
&= 4t_{3}t_{n-3} + 4!ab^4\frac{(n+2)}{(n-3)}t_{n-3} + a(n+2)\left(\left\lfloor\frac{n}{2}\right\rfloor-4\right)\frac{5!b^5}{(n-3)(n-4)}t_{n-3} \leqslant ct_{n-3},
\end{align*}
for some $c>0$.
Using \eqref{eq:f<=3h}, we get
\[f_n \leqslant 3|\mathfrak{H}_n| \leqslant 3ct_{n-3}.\]


\end{proof}

\subsection{From D-finite series to P-recursive sequences}
\label{sec:recurrences}

It is known (see~\cite{Stanley2}) that one can derive a recurrence on the coefficients of a D-finite series. With this method, the differential equation~\eqref{eq:quasi_D-finite} can be rewritten as the following equations on the sequences $(t_n)_n$ and $(f_n)_n$:
\begin{equation}
\label{eq:t_and_f_recurrence}
t_{n+9}=m_{1}(n)t_{n+8}+m_{2}(n)t_{n+7}+m_{3}(n)t_{n+6}+\cdots+m_{8}(n)t_{n+1}+m_{9}(n)t_{n}+f_{n+9},
\end{equation}
where
\begin{align*}
    m_{1}(n)&=3 \, {\left(n + 9\right)}&
    m_{6}(n)&={\left(31 \, n + 49\right)} {\left(n + 4\right)}^{2}\\
    m_{2}(n)&=-3 \, {\left(n + 7\right)}{\left(n + 8\right)} &
    m_{7}(n)&=90 \, {\left(n + 3\right)}^{3}\\
    m_{3}(n)&={\left(n^{2} + 8 \, n + 22\right)} {\left(n + 7\right)}&
    m_{8}(n)&=75 \, {\left(n + 2\right)}^{3}\\
    m_{4}(n)&= 3 \,  {\left(n - 5\right)}{\left(n + 6\right)}^{2}&
    m_{9}(n)&=125\,  {\left(n+1\right)}^3.\\
    m_{5}(n)&= 6 \, {\left(3 \, n + 10\right)} {\left(n + 5\right)}^{2}&&
\end{align*}


Recall that, by considering the bases obtained as the horizontal shifted sum of a basis of size $n-1$ and the basis of size $1$, we have $t_n \geqslant n t_{n-1}$ for all $n \geqslant 1$, so $t_n \geqslant n!$. For this reason, it will be more comfortable to work with the sequence $\widehat{t}_{n}=\frac{1}{(n-1)!}t_{n}$. Furthermore, writing $\widehat{f}_{n}=\frac{1}{(n-1)!}f_{n}$ and $\widehat{m}_{j}(n)=\frac{(n+8-j)!}{(n+8)!}m_{j}(n)$ for $1 \leqslant j \leqslant 9$, \eqref{eq:t_and_f_recurrence} translates to the following equation on $\widehat{t}_n$:
\begin{equation}
\label{eq:recthat}
\widehat{t}_{n+9}=\widehat{m}_{1}(n)\widehat{t}_{n+8}+\widehat{m}_{2}(n)\widehat{t}_{n+7}+\widehat{m}_{3}(n)\widehat{t}_{n+6}+\cdots+\widehat{m}_{8}(n)\widehat{t}_{n+1}+\widehat{m}_{9}(n)\widehat{t}_{n}+\widehat{f}_{n+9},
\end{equation}
for $n \geqslant 1$. 

We prove in Section~\ref{sec:dichotomy result} a general dichotomy result concerning the asymptotic behaviour of sequences, and apply it in Section~\ref{sec:proof} to the sequence $(\widehat{t}_n)_n$.

\section{A general dichotomy result for asymptotics of sequences}
\label{sec:dichotomy result}

In this section, we state a general dichotomy result concerning the asymptotic growth of real sequences. Under some conditions - which are satisfied by the sequence $(\widehat{t}_n)_{n \geqslant 0}$ defined above, a real sequence either grows at most polynomially, or has an explicit stretched exponential asymptotic behaviour.

More precisely, we will control the sequence with an auxiliary sequence $(\alpha_n)_n$. Then, in the stretched exponential asymptotic case, we will deduce the exact asymptotic behaviour of our sequence of interest from the asymptotic behaviour of $(\alpha_n)_n$. The following lemma provides the tool for this conversion.

\begin{lemma}
\label{lem:log_deriv}
Consider sequences $\boldsymbol{x} := (x_n)_{n \geqslant 1}$ and $\boldsymbol{\alpha} := (\alpha_n)_{n \geqslant 1}$ of non-zero real numbers satisfying for all $n$:
\begin{align*}
\frac{x_{n+1}}{x_{n}}=\alpha_{n} = 1+\sum_{i=1}^p \frac{c_i}{n^{r_i}} + O(n^{-1-\eta}), 
\end{align*}
for some $\eta>0$, $p \geqslant 1$, $(c_i)_{i \in \llbracket 1,p \rrbracket} \in (\mathbb{R}^*)^p$ and $0<r_1<\ldots<r_p \leqslant 1$. Then the sequence $\boldsymbol{x}$ has the asymptotic form 
\[x_{n}\sim c e^{H(n)}n^{\gamma},\]
where $H(n)$ and $\gamma$ are defined as follows. 

We can write
\begin{equation}\label{eq:log_alpha}
\ln \alpha_n = \sum_{j=1}^q \frac{d_j}{n^{s_j}} +\frac{\gamma}{n}+ O(n^{-1-\eta'})    
\end{equation}
for some $\eta'>0$, $q \geqslant 1$, $(d_j)_{j \in \llbracket 1,q \rrbracket} \in (\mathbb{R}^*)^q$, $\gamma\in\mathbb{R}$ and $0<s_1<\ldots<s_q <1$. Then the constant $\gamma$ comes from \eqref{eq:log_alpha} and the function $H\in \mathbb{R}[X^{1-s_1},\ldots,X^{1-s_p}]$ is given by
\[ H(X)= \sum_{j=1}^{q} \frac{d_j}{1-s_j} X^{1-s_j}.\]
We call the pair $(H,\gamma)$ the log-integral of $\boldsymbol{\alpha}$.
\end{lemma}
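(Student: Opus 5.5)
The plan is to take logarithms and sum: $\ln x_n = \ln x_1 + \sum_{k=1}^{n-1}\ln\alpha_k$. Before summing, we establish the expansion \eqref{eq:log_alpha}.

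\textbf{Step 1: expanding $\ln\alpha_n$.} Set $u_n := \alpha_n - 1 = \sum_{i=1}^p c_i n^{-r_i} + O(n^{-1-\eta})$, so that $u_n \to 0$ since all $r_i>0$. Let $N$ be the smallest integer with $N r_1 > 1$. Then
\[\ln(1+u_n)=\sum_{m=1}^{N}\frac{(-1)^{m+1}}{m}u_n^m + O(|u_n|^{N+1}),\]
and the remainder is $O(n^{-(N+1)r_1})=O(n^{-1-\eta''})$.

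Expanding each power $u_n^m$ multinomially gives a finite sum of monomials $n^{-(a_1r_1+\cdots+a_pr_p)}$ with $a_i\ge 0$ and $\sum a_i = m$. Any cross term involving the $O(n^{-1-\eta})$ error is itself $O(n^{-1-\eta})$, because the other factors are bounded.

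We then sort the finitely many exponents $s = \sum a_i r_i$:
\begin{itemize}
\item exponents $s<1$ are collected into the terms $d_j n^{-s_j}$, discarding any exponent whose total coefficient vanishes;
\item the exponent $s=1$ (if it occurs) has its coefficient collected into $\gamma$, possibly $\gamma=0$;
\item exponents $s>1$ are finitely many, so their minimum exceeds $1$ and they are absorbed into $O(n^{-1-\eta'})$.
\end{itemize}
This gives \eqref{eq:log_alpha}. The exponents $s_j$ lie in the additive semigroup generated by the $r_i$, which is what the statement about the ring in which $H$ lives means. One degenerate case should be flagged: if all $d_j$ cancel, so that $q=0$, the formula still holds with $H=0$.

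\textbf{Step 2: summing.} Write
\[\ln x_n = \ln x_1 + \sum_{j=1}^{q} d_j\sum_{k=1}^{n-1}k^{-s_j} + \gamma\sum_{k=1}^{n-1}k^{-1} + \sum_{k=1}^{n-1}\varepsilon_k,\]
where $\varepsilon_k = O(k^{-1-\eta'})$.
\begin{itemize}
\item For $0<s<1$, Euler--Maclaurin (or a direct comparison of the sum with the integral) gives $\sum_{k=1}^{n-1}k^{-s} = \frac{n^{1-s}}{1-s} + C_s + o(1)$, since the correction terms are $O(n^{-s})\to 0$.
\item The harmonic sum gives $\sum_{k=1}^{n-1} k^{-1} = \ln n + \gamma_E + o(1)$.
\item The series $\sum_k \varepsilon_k$ converges absolutely, so its partial sums are $C' + o(1)$.
\end{itemize}
Hence $\ln x_n = H(n) + \gamma\ln n + C'' + o(1)$. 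Exponentiating yields $x_n \sim c\,e^{H(n)}n^{\gamma}$.

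For the logarithms to make sense we need to handle signs. Since $\alpha_n\to 1$, there is an $n_0$ with $\alpha_n>0$ for $n\ge n_0$, so $x_n$ has constant sign from $n_0$ onward. We therefore start the summation at $n_0$ and work with $|x_n|$, so the constant $c$ carries the sign of $x_{n_0}$ and is nonzero because every $x_n\neq 0$. Starting at $n_0$ also avoids taking logarithms of possibly negative $\alpha_k$ for small $k$.

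\textbf{Main obstacle.} There is no deep obstacle here. The point needing most care is the bookkeeping in Step 1: showing that only finitely many exponents arise below $1+\eta'$, and that the truncation order $N$ makes the remainder $o(n^{-1})$ with a uniform margin $\eta'>0$. Both follow from the choice of $N$ and the finiteness of the multinomial expansion.
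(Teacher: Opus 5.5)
Your proof is correct and follows the same route as the paper: you expand $\ln\alpha_n$ by substituting the expansion of $\alpha_n$ into the logarithm series, write $x_n$ as a telescoping product, and evaluate $\sum_k \ln\alpha_k$ using the asymptotics of $\sum_k k^{-s}$, the harmonic sum, and an absolutely convergent remainder. Your version is more careful than the paper's on three points: the paper writes $x_n = x_1\exp(\sum_{k=1}^{n-1}\ln\alpha_k)$ without handling possibly negative $\alpha_k$ for small $k$, it does not make the truncation of the logarithm series explicit, and it does not mention the degenerate case $q=0$ (which arises exactly when $r_1=1$, not through cancellation, since $d_1=c_1\neq 0$ whenever $r_1<1$).
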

\begin{proof}
First note that $\ln(\alpha_{n})$ can be written in the form \eqref{eq:log_alpha}: it suffices to substitute $\alpha_n$ by its expansion in the expansion of the logarithm
$\ln(\alpha_{n})=-\sum\limits_{k=1}^{\infty}\frac{(1-\alpha_{n})^{k}}{k}.$

Then, using the relation between $x_{n}$ and $\alpha_{n}$ and a telescoping product, we have
\[x_{n}=x_1\prod_{k=1}^{n-1}\alpha_k = x_{1}\exp\left(\sum_{k=1}^{n-1}\ln\alpha_{k}\right).\]
The sum on the right hand side can then be calculated at $n\to\infty$ using \eqref{eq:log_alpha}:
\[\sum_{k=1}^{n-1}\ln\alpha_{k}=\sum_{j=1}^q\left( \sum_{k=1}^{n-1}\frac{d_j}{k^{s_j}}\right) +\sum_{k=1}^{n-1}\frac{d}{n}+ O(n^{-\eta'})=\sum_{j=1}^q\frac{d_j}{1-s_j} n^{1-s_j} +d\ln(n)+K+ o(1)\]
for some $K\in\mathbb{R}$. That is:
\[x_{n}\sim x_{1}\exp\left(H(n)+\gamma\ln(n)+K\right).\]
This completes the proof of Lemma \ref{lem:log_deriv}, using $c=x_{1}e^{K}.$
\end{proof}
\begin{rem}
 As an example, if $\alpha_n=1+\frac{a}{\sqrt{n}}+\frac{b}{n}$, then $H(n)=2a\sqrt{n}$ and $\gamma=b-a^2/2$ so \[x_n \sim c \, e^{2a\sqrt{n}} n^{b-a^2/2}.\] 
 More generally, if $r_1,\ldots,r_p \in \mathbb{Q}$, then $H(n)=Q(n^r)$ where $r \in \mathbb{Q}$ is rational and $Q \in \mathbb{R}[X]$ is a polynomial.
\end{rem}

We now state our main technical tool, which can be seen as a generalisation of the lemma above.
\begin{thm}
\label{thm:dichotomy_general}
Fix reals $\varepsilon>0$ and $\Gamma>0$, and an integer $m \geqslant 1$. Consider $(m+1)$ sequences $(\alpha_n^{(j)})_{n \geqslant 1}$ for $j=0,1,\ldots,m$ satisfying:
\begin{itemize}
\item[(A1)] As $n \rightarrow \infty$, we have 
\begin{align*}
\alpha_n^{(0)} = 1 + \sum_{i=1}^p \frac{c_i}{n^{r_i}}+O(n^{-1-\eta}),
\end{align*}
for some $p \geqslant 1$, $0<r_1<...<r_p \leqslant 1$, $(c_i)_{i \in \llbracket 1,p\rrbracket} \in (\mathbb{R}^*)^p$ and $\eta>0$. We assume in addition that $c_1>0$ and $r_{1}<1$.
\item[(A2)] There exists $C>0$ such that, for all $1 \leqslant j \leqslant m$ and all $n \geqslant 1$ 
\begin{align*}
0 \leqslant \alpha_n^{(j)} \leqslant 1+\frac{C}{n}.
\end{align*}
\end{itemize}
Then there exists a constant $N_{0}$ (depending on $\Gamma$, $\varepsilon$ and the sequences $(\alpha_n^{(j)})_{n \geqslant 1}$) with the following property: For any $N>N_{0}$, any $(\lambda_n^{(1)},\ldots,\lambda_n^{(m)})_{n \geq N}$ real numbers such that $1+ \sum_{j=1}^m |\lambda_n^{(j)}| \leqslant \Gamma$ for all $n \geq N$, and any sequences of real numbers $x_{N}^{(j)},x_{N+1}^{(j)},\ldots$ and $y_{N},y_{N+1},\ldots$ satisfying:
\begin{align}
    y_{n}&=x_n^{(0)} + \sum_{j=1}^{m}\lambda_n^{(j)} x_{n}^{(j)}\label{eq:y_definition_general}\\
     n^{-1-2\varepsilon}y_{n}&>|x_{n+1}^{(j)}-\alpha_{n}^{(j)}x_{n}^{(j)}|,\label{eq:yx_inequality0_general}
\end{align}
for $n\geqslant N$ and $j=0,1,\ldots,m$, we have the following dichotomy:
\begin{itemize}
    \item[(a)] If there is some $M\geqslant N$ for which $x_{M}^{(0)}\geqslant \max\limits_{j \in \llbracket 1,m\rrbracket}|x_{M}^{(j)}|$ then it holds for all $n \geqslant M$, and, as $n\to\infty$, $y_{n}\sim x_{n}^{(0)} \sim c e^{H(n)} n^\gamma$ for some constant $c>0$, where $(H,\gamma)$ is the log-integral of $(\alpha_n^{(0)})_{n \geqslant 1}$.
    \item[(b)] If for all $n\geqslant N$ we have $x_{n}^{(0)}<\max\limits_{j \in \llbracket 1,m\rrbracket}|x_{n}^{(j)}|$, then $y_{n}$ grows at most polynomially.
\end{itemize}
\end{thm}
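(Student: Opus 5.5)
The proof is a comparison argument. We show that once $x^{(0)}_n$ dominates the other $|x^{(j)}_n|$, it dominates them forever and drives $y_n$. If it never dominates, $y_n$ is controlled by sequences whose growth ratios are at most $1+C/n$. Write $X_n := \max_{1\le j\le m}|x_n^{(j)}|$. Hypothesis \eqref{eq:yx_inequality0_general} gives, for each $j$,
\[x^{(j)}_{n+1} = \alpha^{(j)}_n x^{(j)}_n + e^{(j)}_n, \qquad |e^{(j)}_n| < n^{-1-2\varepsilon} y_n.\]
From \eqref{eq:y_definition_general} we also get $|y_n| \le \Gamma \max(|x_n^{(0)}|, X_n)$. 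So all error terms are bounded by $\Gamma n^{-1-2\varepsilon}\max(|x^{(0)}_n|,X_n)$, which is summable relative to the size of the sequences. Note also that \eqref{eq:yx_inequality0_general} forces $y_n>0$.

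\textbf{Case (a).} We prove by induction that $x^{(0)}_n \ge X_n$ persists for $n\ge M$, provided $N_0$ is large. If $x_n^{(0)}\ge X_n$, then $0<y_n\le \Gamma x^{(0)}_n$. Using (A1) with $c_1>0$ and $r_1<1$, we have $\alpha^{(0)}_n \ge 1 + \tfrac{c_1}{2} n^{-r_1}$ for large $n$. Hence
\[x^{(0)}_{n+1} \ge x^{(0)}_n\bigl(1+\tfrac{c_1}{2}n^{-r_1} - \Gamma n^{-1-2\varepsilon}\bigr).\]
By (A2) and $\alpha^{(j)}_n\ge 0$,
\[|x^{(j)}_{n+1}| \le x^{(0)}_n\bigl(1+Cn^{-1}+\Gamma n^{-1-2\varepsilon}\bigr).\]
Since $n^{-r_1}\gg n^{-1}$, the first bound exceeds the second once $n\ge N_0$, which closes the induction.

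Next we show that $X_n/x^{(0)}_n\to 0$. Iterating the two bounds, the ratio of $|x^{(j)}_n|$ to $x^{(0)}_n$ is bounded by a sum of a term $\prod(1+C/k)/\prod(1+c_1 k^{-r_1}/2)$, which decays like $\exp(-c n^{1-r_1})$, plus error contributions $\sum_k k^{-1-2\varepsilon}\cdot(\text{decaying factors})$. Both tend to $0$. Consequently $y_n = x^{(0)}_n(1+o(1))$.

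Then $x^{(0)}_{n+1}/x^{(0)}_n = \alpha^{(0)}_n + O(n^{-1-2\varepsilon})$. We apply Lemma~\ref{lem:log_deriv} with $\eta$ replaced by $\min(\eta,2\varepsilon)$ to obtain $x^{(0)}_n\sim c e^{H(n)}n^\gamma$. Since the error term does not change the expansion up to order $n^{-1}$, the log-integral is that of $\alpha^{(0)}$, and $c>0$ because $x^{(0)}_n$ is positive and increasing.

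\textbf{Case (b).} Now $x^{(0)}_n< X_n$ for all $n$, so $y_n\le \Gamma X_n$. By (A2),
\[X_{n+1}\le X_n\bigl(1+Cn^{-1}+\Gamma n^{-1-2\varepsilon}\bigr).\]
The product of these factors is $O(n^{C})$, so $X_n=O(n^C)$, and hence $y_n=O(n^C)$.

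The hardest step is the quantitative persistence-and-domination estimate in (a). We need a single $N_0$, independent of $N$, of the $\lambda$'s and of the initial values, such that both the induction and the decay of $X_n/x^{(0)}_n$ work uniformly. We will handle this by choosing $N_0$ so that $\tfrac{c_1}{2}n^{-r_1} > 2Cn^{-1}+3\Gamma n^{-1-2\varepsilon}$ for all $n\ge N_0$, and by tracking the ratio $\rho_n:=X_n/x^{(0)}_n\le1$ through the recursion
\[\rho_{n+1}\le \rho_n\bigl(1-\tfrac{c_1}{4}n^{-r_1}\bigr)+2\Gamma n^{-1-2\varepsilon}.\]
This recursion forces $\rho_n\to0$.
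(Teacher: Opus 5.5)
Your proof is correct and is essentially the paper's argument. For (a), it uses the same persistence induction, then $y_n\sim x_n^{(0)}$, then $x_{n+1}^{(0)}/x_n^{(0)}=\alpha_n^{(0)}+O(n^{-1-2\varepsilon})$ and Lemma~\ref{lem:log_deriv}; for (b), it uses a $1+O(1/n)$ growth bound on $\max_j|x_n^{(j)}|$, where the paper uses $\sum_j|x_n^{(j)}|$.

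The one difference is how you get $X_n/x_n^{(0)}\to0$. The paper builds this into the induction by proving the stronger statement $x_n^{(0)}\ge (n/M)^{\varepsilon}X_n$, which only needs $n(\alpha_n^{(0)}-1)\to\infty$. You instead prove plain persistence and then run a separate contraction recursion for $\rho_n$, exploiting $n^{-r_1}\gg n^{-1}$. For that recursion, $N_0$ must be taken somewhat larger than your stated condition, to absorb the second-order terms, which is harmless.
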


\begin{proof}
Consider $C>0$ so that (A2) holds with that constant $C$. Using (A1) and the fact that $r_1<1$ and $c_1>0$, we have $n (\alpha_n^{(0)}-1) \rightarrow \infty$ as $n \rightarrow \infty$. Therefore, there is some $N_0$ sufficiently large such that for any $n \geqslant N_0$:
\begin{align}
\label{eq:alpha0 is large}
\alpha_n^{(0)} \geqslant \left( 1 + \frac{C}{n} \right)\left( 1+\frac{1}{n} \right)^{\varepsilon} + \Gamma n^{-1-2\varepsilon} \left( 1+(n+1)^{\varepsilon} \right),
\end{align}
since the right-hand side is equivalent, as $n \rightarrow \infty$, to $1+\widetilde{C}/n$ for some constant $\widetilde{C} > 0$. 

For the first part of the proof we assume that there is some $M\geqslant N$ satisfying
\begin{equation}\label{eq:x_assumption_general}
x_{M}^{(0)}\geqslant \max\limits_{j \in \llbracket 1,m\rrbracket}|x_{M}^{(j)}|.
\end{equation}

We will show the following inequality by induction for $n\geqslant M$. \begin{equation}
\label{eq:x_inequality_general}
x_{n}^{(0)}\geqslant\left(\frac{n}{M}\right)^{\varepsilon}\max\limits_{j \in \llbracket 1,m\rrbracket}|x_{n}^{(j)}|.
\end{equation}
The base case is precisely the assumption \eqref{eq:x_assumption_general}. For the inductive step, we assume $x_{n}^{(0)}\geqslant\left(n/M\right)^\varepsilon|x_{n}^{(j)}|$ for all $j \geqslant 1$, and prove the same inequality for $n+1$. From the inequality \eqref{eq:yx_inequality0_general}, we have
\[x_{n+1}^{(0)}\geqslant \alpha_{n}^{(0)}x_{n}^{(0)}-n^{-1-2\varepsilon}|y_{n}|,\]
while for $j \geqslant 1$, we have
\[|x_{n+1}^{(j)}|\leqslant \alpha_{n}^{(j)}|x_{n}^{(j)}|+n^{-1-2\varepsilon}|y_{n}|\leqslant\alpha_{n}^{(j)}\left(M/n\right)^\varepsilon x_{n}^{(0)}+n^{-1-2\varepsilon}|y_{n}|.\]

By the inductive hypothesis, $x_{n}^{(0)}\geqslant|x_{n}^{(j)}|$ for all $j \geqslant 1$, so by \eqref{eq:y_definition_general} we have $|y_{n}|\leqslant \Gamma x_{n}^{(0)}$, since $\Gamma\geqslant 1+\sum_{j=1}^m |\lambda_n^{(j)}|$. Hence it suffices to prove that
\[\alpha_{n}^{(0)}x_{n}^{(0)}-\Gamma n^{-1-2\varepsilon} x_{n}^{(0)} \geqslant \left( \frac{n+1}{M} \right)^{\varepsilon} \left( \alpha_n^{(j)} (M/n)^\varepsilon x_n^{(0)} + \Gamma n^{-1-2\varepsilon} x_n^{(0)} \right), \] 
that is,
\[\alpha_{n}^{(0)}-\Gamma n^{-1-2\varepsilon}\geqslant \alpha_{n}^{(j)}\left(1+\frac{1}{n}\right)^\varepsilon+\Gamma\left(\frac{n+1}{M}\right)^\varepsilon n^{-1-2\varepsilon}.\]
From (A2), it is enough to prove that 
\[\alpha_{n}^{(0)}-\Gamma n^{-1-2\varepsilon}\geqslant \left( 1+\frac{C}{n}\right)\left(1+\frac{1}{n}\right)^\varepsilon+\Gamma \left(\frac{n+1}{M}\right)^\varepsilon n^{-1-2\varepsilon},\]
which follows directly from \eqref{eq:alpha0 is large}, completing the induction. 
Hence, \eqref{eq:x_inequality_general} holds for all $n\geqslant M$.

Remark now that, for all $n \geqslant M$, we have $x_n^{(0)}>0$. Indeed, if $x_n^{(0)}=0$, then $x_n^{(j)}=0$ for all $j \geqslant 1$ by \eqref{eq:x_inequality_general}, so that $y_n=0$ by definition (see \eqref{eq:y_definition_general}). This would contradict \eqref{eq:yx_inequality0_general}.

From \eqref{eq:y_definition_general} and \eqref{eq:x_inequality_general}, it follows that $y_{n}\sim x_{n}^{(0)}$ as $n\to\infty$. Hence, the inequality \eqref{eq:yx_inequality0_general} for $j=1$ can be written as
\[\frac{x_{n+1}^{(0)}}{x_{n}^{(0)}}=\alpha_{n}^{(0)}+O(n^{-1-2\varepsilon}).\]
Therefore, the log-integral of $(x_{n+1}^{(0)}/x_{n}^{(0)})_{n \geqslant 1}$ is the same as the log-integral $(H,\gamma)$ of $\boldsymbol{\alpha} := (\alpha_n^{(0)})_{n \geqslant 1}$. It follows from Lemma \ref{lem:log_deriv}, that 
\[ x_n^{(0)} \sim c \, e^{H(n)} n^\gamma,\]
for some $c \in \mathbb{R}$. Finally, since $y_{n}\sim x_{n}^{(0)}$, this completes the proof of (a).

\hfill

Let us now prove (b). We make the alternative hypothesis, that for all $n\geqslant N$ we have $x_{n}^{(0)}~<~\max\limits_{j \in \llbracket 1,m \rrbracket} |x_{n}^{(j)}|$.
 Set $z_{n}=\sum_{j = 1}^m|x_{n}^{(j)}|$, so that $x_{n}^{(0)}<z_{n}$. By \eqref{eq:y_definition_general}, we have
 \begin{equation}
 \label{eq:proof_of_b_general}
 y_{n}=x_{n}^{(0)}+\sum_{j=1}^m \lambda_n^{(j)} x_{n}^{(j)} <z_{n}+\sum_{j=1}^m |\lambda_n^{(j)}| |x_{n}^{(j)}|\leqslant \Gamma z_{n}. 
 \end{equation}

Moreover, from \eqref{eq:yx_inequality0_general}, we have for $n$ large enough
\[z_{n+1}\leqslant \sum_{j=1}^m |\alpha_{n}^{(j)}x_{n}^{(j)}|+m n^{-1-2\varepsilon}y_{n}\leqslant \left(\max_{j \in \llbracket 1,m\rrbracket}\alpha_{n}^{(j)}\right)z_{n}+m \Gamma n^{-1-2\varepsilon}z_{n},\]
where the second inequality uses \eqref{eq:proof_of_b_general}.

Dividing both sides by $z_{n}$ (which is positive) and using (A2) yields
\[\frac{z_{n+1}}{z_{n}}\leqslant 1+O\left(1/n\right).\]
It follows that $z_{n}$ grows at most polynomially. Finally, by \eqref{eq:yx_inequality0_general}, $y_{n}$ is positive so from \eqref{eq:proof_of_b_general}, $y_{n}$ grows at most polynomially.
\end{proof}

\section{Proof of Theorem \ref{thm:asymptotic_behaviour}}
\label{sec:proof}

The purpose of this section is to prove Theorem~\ref{thm:asymptotic_behaviour}, which states that the sequence $(t_n)_{n\geqslant0}$ has a stretched exponential asymptotic behaviour, by applying Theorem \ref{thm:dichotomy_general} to the sequence $(\widehat{t}_n)_{n \geqslant 0}$. To this end, we build suitable sequences $(x_n^{(j)})_{n \geqslant 1}$ and $(\alpha_n^{(j)})_{n \geqslant 1}$, then apply Theorem \ref{thm:dichotomy_general} to obtain a dichotomy result for $(\widehat{t}_n)_{n \geqslant 0}$. Finally, we show that this sequence grows faster than any polynomial, completing the proof of Theorem \ref{thm:asymptotic_behaviour}.

\subsection{Finding sequences \texorpdfstring{$(x_n^{(j)})_{n \geqslant 1}$}{x} and \texorpdfstring{$(\alpha_n^{(j)})_{n \geqslant 1}$}{alpha}}
\label{subsec:x_and_alpha}

In this section we provide the missing link between the recurrences \eqref{eq:t_and_f_recurrence} and Theorem \ref{thm:dichotomy_general}, namely we define from \eqref{eq:t_and_f_recurrence} sequences $(x_n^{(j)})_{n \geqslant }$, $1(\alpha_n^{(j)})_{n \geqslant 1}$ and $(\lambda_n^{(j)})_{n \geqslant 1}$, to which we can apply Theorem \ref{thm:dichotomy_general}. The method of proof is inspired by \cite{Wasow}, which uses a similar analysis on differential equations rather than recurrences. 


First, we define matrices associated to the recurrences \eqref{eq:t_and_f_recurrence} as

\begin{equation*}
    \Mvec_{n}:=\left(\begin{array}{cccccc}
0&1&0&\cdots&0&0\\
0&0&1&\cdots&0&0\\
\vdots&\vdots&\vdots&\ddots&\vdots&\vdots\\
0&0&0&\cdots&1&0\\
0&0&0&\cdots&0&1\\
\widehat{m}_{9}(n)&\widehat{m}_{8}(n)&\widehat{m}_{7}(n)&\cdots&\widehat{m}_{2}(n)&\widehat{m}_{1}(n)
\end{array}\right)\end{equation*}
and 
\begin{equation*}
\Tvec_{n}:=\left(\begin{array}{c}
\widehat{t}_{n}\\
\widehat{t}_{n+1}\\
\vdots\\
\widehat{t}_{n+6}\\
\widehat{t}_{n+7}\\
\widehat{t}_{n+8}
\end{array}
\right)\qquad\text{and}\qquad\Fvec_{n}:=\left(\begin{array}{c}
0\\
0\\
\vdots\\
0\\
0\\
\widehat{f}_{n+8}
\end{array}
\right).
\end{equation*}

Then \eqref{eq:recthat}  can be written as 
 \begin{equation}
 \label{eq:tlhat_matrix_recurrence}
 \Mvec_{n}\Tvec_{n}+\Fvec_{n+1}=\Tvec_{n+1}.
 \end{equation}
 
Our goal now is to apply Theorem \ref{thm:dichotomy_general} using sequences $(x_n^{(j)})_{n \geqslant 1}$ of the form $x_{n}^{(j)}=\Rvec^{(j)}_{n}\Tvec_{n}$, for suitably chosen row vectors $(\Rvec^{(j)}_{n})_{n \geqslant 1}$, as these are linear combinations of consecutive terms of the sequence $\widehat{t}_{n}$ of interest. We will then choose the sequences $(\alpha_n^{(j)})_{n \geqslant 1}$ accordingly. To apply Theorem~\ref{thm:dichotomy_general}, it is required that $x_{n+1}^{(j)} \approx \alpha_n^{(j)} x_n^{(j)}$. Using the fact that $\mathcal{F}_n$ is small, \eqref{eq:tlhat_matrix_recurrence} then implies that $\alpha_n^{(j)} \Rvec^{(j)}_n \Tvec_{n} \approx \Rvec^{(j)}_{n+1} \Tvec_{n+1} \approx \Rvec^{(j)}_{n+1} \Mvec_{n} \Tvec_{n}$. We therefore look for sequences that satisfy $\alpha_n^{(j)} \Rvec^{(j)}_n \approx \Rvec^{(j)}_{n+1} \Mvec_{n}$. 

Specifically, the level of precision that we will require is


\begin{equation}
\label{eq:R_generic_equation_first_time}
\left|\Rvec^{(j)}_{n+1}\Mvec_{n}- \alpha^{(j)}_{n} \Rvec^{(j)}_{n}\right|=O\left(n^{-\frac{5}{2}}\right),
\end{equation}
where $|\mathcal{X}|$ denotes the $1$-norm of a vector $\mathcal{X}$.

Since we expect the asymptotic behaviour of $\widehat{t}_n$ to contain a stretched exponential factor $e^{\sqrt{12 n}}$, Theorem \ref{thm:dichotomy_general} requires $\alpha_n^{(0)}$ to have the form $1+a/\sqrt{n}+b/n+O(n^{-1-\eta})$ for some $\eta>0$. Therefore, it makes sense from the relation $\alpha_n^{(j)} \Rvec^{(j)}_n \approx \Rvec^{(j)}_{n+1} \Mvec_{n}$ to look for vectors $\Rvec^{(j)}_n$ whose coefficients are polynomials in $1/\sqrt{n}$. Furthermore, by \eqref{eq:R_generic_equation_first_time} we can assume that these polynomials have degree at most $4$.

After some experimentation, it suffices to look for row vectors of the following generic form:
\[\Rvec^{*}_{n}:=\left(\begin{array}{c}
0\\
0\\
0\\
0\\
P_{4}\left(\frac{1}{\sqrt{n}}\right)\\
P_{5}\left(\frac{1}{\sqrt{n}}\right)\\
P_{6}\left(\frac{1}{\sqrt{n}}\right)\\
P_{7}\left(\frac{1}{\sqrt{n}}\right)\\
1\end{array}
\right)^{T},\]
and sequence $\alpha^{*}_{n}=A\left(\frac{1}{\sqrt{n}}\right)$, where $A$ and each $P_{i}$ is a polynomial (of degree at most 4) with unknown coefficients.

The equation
\begin{equation}
\label{eq:R_generic_equation}
\left|\Rvec^{*}_{n+1}\Mvec_{n}- \alpha^{*}_{n} \Rvec^{*}_{n}\right|=O\left(n^{-\frac{5}{2}}\right),
\end{equation}
then defines a system of simultaneous polynomial equations satisfied by the coefficients of these polynomials. Solving these equations in SageMath (see accompanying SageMath notebook) yields five solutions, of which we only need the following three:
\[\Rvec^{(0)}_{n}:=\left(\begin{array}{c}
0\\
0\\
0\\
0\\
\frac{18}{n^{2}}\\
\frac{3}{n}-\frac{3 \, \sqrt{3}}{n^{\frac{3}{2}}} - \frac{143}{4 \, n^{2}}\\
1-\frac{\sqrt{3}}{\sqrt{n}}  - \frac{23}{12 \, n}+ \frac{5 \, \sqrt{3}}{6 \, n^{\frac{3}{2}}} + \frac{2941}{144 \, n^{2}}\\
 - 2+\frac{\sqrt{3}}{\sqrt{n}} - \frac{13}{12 \, n} + \frac{\sqrt{3}}{2 \, n^{\frac{3}{2}}} + \frac{275}{12 \, n^{2}}\\
1
\end{array}
\right)^{T} ~~~~~~ \Rvec^{(1)}_{n}:=\left(\begin{array}{c}
0\\
0\\
0\\
0\\
\frac{18}{n^{2}}\\
\frac{3}{n} - \frac{41}{n^{2}}\\
1-\frac{11}{3 \, n} + \frac{148}{9 \, n^{2}}\\
-2-\frac{7}{3 \, n} + \frac{65}{3 \, n^{2}}\\
1
\end{array}
\right)^{T}\]

\[\Rvec^{(2)}_{n}:=\left(\begin{array}{c}
0\\
0\\
0\\
0\\
\frac{18}{n^{2}}\\
\frac{3}{n}+\frac{3 \, \sqrt{3}}{n^{\frac{3}{2}}} - \frac{143}{4 \, n^{2}}\\
1+\frac{\sqrt{3}}{\sqrt{n}}  - \frac{23}{12 \, n}-\frac{5 \, \sqrt{3}}{6 \, n^{\frac{3}{2}}} + \frac{2941}{144 \, n^{2}}\\
 - 2-\frac{\sqrt{3}}{\sqrt{n}} - \frac{13}{12 \, n} - \frac{\sqrt{3}}{2 \, n^{\frac{3}{2}}} + \frac{275}{12 \, n^{2}}\\
1
\end{array}
\right)^{T},\]
along with 
\begin{align*}
\alpha^{(0)}_{n}&=1+\frac{\sqrt{3}}{\sqrt{n}}+\frac{23}{12n},\\
\alpha^{(1)}_{n}&=1+\frac{2}{3n},\\
\alpha^{(2)}_{n}&=1-\frac{\sqrt{3}}{\sqrt{n}}+\frac{23}{12n}.
\end{align*}

Finally, we need to choose appropriate $\lambda_n^{(1)},\lambda_n^{(2)}$ such that $y_n = x_n^{(0)} + \lambda_n^{(1)} x_n^{(1)} +\lambda_n^{(2)} x_n^{(2)}$ has asymptotic behaviour closely related to that of $\widehat{t}_n$. It is enough to require that $(\Rvec_n^{(0)} +\lambda_n^{(1)}\Rvec_n^{(1)} +\lambda_n^{(2)}\Rvec_n^{(2)})$ has non-negative coefficients, as then $y_n = (\Rvec_n^{(0)} +\lambda_n^{(1)}\Rvec_n^{(1)} +\lambda_n^{(2)}\Rvec_n^{(2)})\Tvec_n$ is a positive linear combination of consecutive terms $\widehat{t}_{n+k}$. It is easy to check that $(\lambda_n^{(1)},\lambda_n^{(2)}) = (-2,1)$ for all $n \geqslant 1$ satisfies this property. 

The following lemma summarises the properties of the sequences $(\Rvec_n^{(j)})_n$ and $(\alpha_n^{(j)})_n$ that we will use in the rest of the proof.

\begin{lemma}\label{lem:RM_computations}
For each $j \in \{0,1,2\}$, we have 
\begin{equation}
\label{eq:RM_equation}
\left|\Rvec^{(j)}_{n+1}\Mvec_{n}- \alpha^{(j)}_{n} \Rvec^{(j)}_{n}\right|=O\left(n^{-\frac{5}{2}}\right),
\end{equation}
moreover the following linear combination has non-negative coefficients:
\begin{equation}
\label{eq:R_positive_linear_combination}
\Rvec^{(0)}_{n}-2\Rvec^{(1)}_{n}+\Rvec^{(2)}_{n}=\left(\begin{array}{c}
0\\
0\\
0\\
0\\
0\\
\frac{21}{2 \, n^{2}}\\
\frac{7}{2 \, n} + \frac{191}{24 \, n^{2}}\\
\frac{5}{2 \, n} + \frac{5}{2 \, n^{2}}\\
0
\end{array}
\right)^{T}
\end{equation}
\end{lemma}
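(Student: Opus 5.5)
This lemma is a finite symbolic computation, and I would prove it directly rather than through any structural argument.

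\textbf{Step 1: expand the matrix entries.} The first step is to write each $\widehat{m}_j(n)=\frac{(n+8-j)!}{(n+8)!}m_j(n)$ as a rational function of $n$. Since $m_j$ has degree at most $3$ and the prefactor is $1/\bigl((n+8)(n+7)\cdots(n+9-j)\bigr)$, we get:
\begin{itemize}
\item $\widehat{m}_1$ is exactly $3$;
\item $\widehat{m}_2 = -3+O(1/n)$ and $\widehat{m}_3 = 1+O(1/n)$;
\item $\widehat{m}_4 = O(1/n)$, and in general $\widehat{m}_j=O(n^{3-j})$.
\end{itemize}
Each $\widehat{m}_j$ is then expanded as a Laurent series in $1/n$ up to order $n^{-3}$, with an explicit remainder. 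Only $\widehat{m}_j$ with $j\le 5$ contribute up to order $n^{-2}$; $\widehat{m}_6,\dots,\widehat{m}_9$ are $O(n^{-3})$.

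\textbf{Step 2: compute $\Rvec^{(j)}_{n+1}\Mvec_n$ coordinatewise.} Because $\Mvec_n$ is a companion matrix, for a row vector $\Rvec=(r_0,\dots,r_8)$ we have
\[
(\Rvec\Mvec_n)_0 = r_8\widehat{m}_9(n), \qquad (\Rvec\Mvec_n)_k = r_{k-1} + r_8\,\widehat{m}_{9-k}(n) \quad (1\le k\le 8).
\]
With $r_8=1$, this makes the comparison with $\alpha^{(j)}_n\Rvec^{(j)}_n$ entry by entry very simple:
\begin{itemize}
\item Coordinates $0$ to $3$: these equal $\widehat{m}_9,\dots,\widehat{m}_6$, shifted by zero entries, and are therefore $O(n^{-3})$.
\item Coordinate $4$: the difference is $\widehat{m}_5(n) - \alpha_n\cdot 18/n^2$ up to $O(n^{-3})$. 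We have $\widehat{m}_5 = 6(3n+10)(n+5)^2/((n+8)(n+7)(n+6)(n+5))\sim 18/n^2$, so the difference is $O(n^{-5/2})$.
\item Coordinates $5$ to $8$: these require expanding $P_i(1/\sqrt{n+1})$ via $(n+1)^{-s}=n^{-s}(1-s/n+\dots)$ to order $n^{-2}$, multiplying out with $\alpha_n$, and checking that all coefficients of $n^{0},n^{-1/2},n^{-1},n^{-3/2},n^{-2}$ cancel. The last coordinate encodes $\alpha_n = r_7(n+1)+\widehat{m}_1$ to this order.
\end{itemize}
These identities are exactly the polynomial system said to have been solved in the accompanying notebook. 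In the written proof I would present them as a verification, substituting the given $\Rvec^{(j)}$ and $\alpha^{(j)}$, rather than as a derivation. Remainders are uniform because every expansion is a convergent series in $n^{-1/2}$ for $n\ge 2$, so the $O(n^{-5/2})$ holds with an explicit constant for all $n\geqslant 1$ after adjusting the constant.

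\textbf{Step 3: the linear combination.} Identity \eqref{eq:R_positive_linear_combination} is direct addition. The terms odd in $\sqrt{n}$ cancel between $\Rvec^{(0)}$ and $\Rvec^{(2)}$, since these differ by $\sqrt{3}\mapsto-\sqrt{3}$. The even parts combine with $-2\Rvec^{(1)}$; for example, in coordinate $6$ we get $2(1-\tfrac{23}{12n}+\tfrac{2941}{144n^2})-2(1-\tfrac{11}{3n}+\tfrac{148}{9n^2}) = \tfrac{7}{2n}+\tfrac{191}{24n^2}$, and the other coordinates are handled the same way. Non-negativity is then evident.

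\textbf{Main obstacle.} The expected obstacle is bookkeeping in Step 2, coordinates $5$ to $8$, for $j=0,2$. There the half-integer powers interact with the shift $n\to n+1$, and a single arithmetic slip breaks the cancellation. I would handle this by doing the expansions in a computer algebra system with $u=n^{-1/2}$, certifying that each coordinate of $\Rvec^{(j)}_{n+1}\Mvec_n-\alpha^{(j)}_n\Rvec^{(j)}_n$ is a series in $u$ starting at $u^5$. The case $j=1$ is a pure $1/n$ expansion and can be checked by hand.
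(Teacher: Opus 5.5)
Your approach is the same as the paper's. The lemma is a finite symbolic verification, which the paper simply delegates to its SageMath notebook. Your companion-matrix reduction $(\Rvec\Mvec_n)_k=r_{k-1}+r_8\widehat{m}_{9-k}(n)$, the coordinate-by-coordinate cancellation plan, and your check of \eqref{eq:R_positive_linear_combination} are all correct.

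Two slips in Step 1 need fixing, however, and the first one would break the verification if used as written.

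\textbf{First slip.} $\widehat{m}_1(n)=\frac{3(n+9)}{n+8}=3+\frac{3}{n}-\frac{24}{n^2}+O(n^{-3})$ is not exactly $3$; it is $\widehat{m}_2$ that equals $-3$ identically. The correction matters in the last coordinate. For $j=0$ one has $r_7(n+1)=-2+\sqrt{3}\,n^{-1/2}-\frac{13}{12}n^{-1}+24n^{-2}+O(n^{-5/2})$. Only adding $3+\frac{3}{n}-\frac{24}{n^2}$ recovers $\alpha^{(0)}_n=1+\frac{\sqrt{3}}{\sqrt{n}}+\frac{23}{12n}$; the $23/12$ is $-13/12+3$. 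With $\widehat{m}_1=3$, the difference in the last coordinate is $-3/(n+8)$ for every $j$, so the check fails already at order $n^{-1}$, including for $j=1$.

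\textbf{Second slip.} The correct formula is $\widehat{m}_5(n)=\frac{6(3n+10)(n+5)^2}{(n+4)(n+5)(n+6)(n+7)(n+8)}$, with five factors in the denominator. With the four factors you wrote, it would behave like $18/n$. Your stated asymptotic $\widehat{m}_5\sim 18/n^2$ is nonetheless the right one.

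With these two corrections, the expansions cancel exactly as you describe, to order $n^{-2}$ in every coordinate.
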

\begin{proof}
The equation \eqref{eq:RM_equation} follows directly from the way we constructed the matrices. Then, \eqref{eq:R_positive_linear_combination} follows from directly computing the matrices (see accompanying SageMath file ``Triangular percolation worksheet'')
\end{proof}
\begin{rem}
 We note that we could have used a linear combination of all five solutions to \eqref{eq:R_generic_equation} instead of just three to produce, for example, the vector $(0~0~\cdots~0~1)$ on the right of \eqref{eq:R_positive_linear_combination}. However, doing so, the coefficients $\lambda_n^{(j)}$ would depend on $n$, and in particular their sum may not be bounded. This would complicate the rest of the proof, and thus we prefer to use the linear combination considered above. 
\end{rem}

\subsection{A dichotomy for \texorpdfstring{$(\widehat{t}_n)_{n \geqslant 0}$}{tnhat}}
\label{subsec:dichotomy}

We prove here that we can apply Theorem \ref{thm:dichotomy_general} to the sequences $(x_n^{(j)})_{n \geqslant 1}$ and $(\alpha_n^{(j)})_{n \geqslant 1}$ defined above, and therefore get the following dichotomy result for $(\widehat{t}_n)_{n \geqslant 0}$. 

\begin{prop}
\label{prop:dichotomy applied to tn}
Either $\widehat{t}_n$ grows at most polynomially, or there exists a constant $c>0$ such that
\begin{align*}
    \widehat{t}_n \underset{n \rightarrow\infty}{\sim} c e^{\sqrt{12 n}} n^{17/12}
\end{align*}
\end{prop}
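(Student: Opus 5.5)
We apply Theorem~\ref{thm:dichotomy_general} with $m=2$ and the data of Lemma~\ref{lem:RM_computations}. We set $x_n^{(j)}:=\Rvec_n^{(j)}\Tvec_n$ for $j=0,1,2$, with $\lambda_n^{(1)}=-2$ and $\lambda_n^{(2)}=1$, so that $\Gamma=4$. Then
\[
y_n=(\Rvec^{(0)}_n-2\Rvec^{(1)}_n+\Rvec^{(2)}_n)\Tvec_n=\tfrac{21}{2n^2}\widehat t_{n+5}+\bigl(\tfrac{7}{2n}+\tfrac{191}{24n^2}\bigr)\widehat t_{n+6}+\bigl(\tfrac{5}{2n}+\tfrac{5}{2n^2}\bigr)\widehat t_{n+7}.
\]
Hypothesis (A1) holds for $\alpha^{(0)}_n=1+\sqrt3/\sqrt n+\tfrac{23}{12n}$, with $c_1=\sqrt3>0$, $r_1=1/2<1$, and exact expansion (so any $\eta$ works). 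Hypothesis (A2) holds for $\alpha^{(1)}_n=1+\tfrac{2}{3n}$ trivially, and for $\alpha^{(2)}_n=1-\sqrt3/\sqrt n+\tfrac{23}{12n}$ once $n$ is large enough that it lies in $[0,1]$. The finitely many small $n$ are irrelevant, since we may simply start the sequences at large $N$, or equivalently modify finitely many terms of the $\alpha^{(j)}$.

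The main work is checking \eqref{eq:yx_inequality0_general}. Using \eqref{eq:tlhat_matrix_recurrence},
\[
x^{(j)}_{n+1}-\alpha^{(j)}_nx^{(j)}_n=(\Rvec^{(j)}_{n+1}\Mvec_n-\alpha^{(j)}_n\Rvec^{(j)}_n)\Tvec_n+\Rvec^{(j)}_{n+1}\Fvec_{n+1}.
\]
By \eqref{eq:RM_equation}, the first term is bounded by $O(n^{-5/2})\max_{0\le k\le 8}\widehat t_{n+k}$. To compare this with $y_n$, we use that $t_n\ge nt_{n-1}$, so $\widehat t_{n}$ is nondecreasing. We also need an upper bound $\widehat t_{n+8}\le K\widehat t_{n+7}$, i.e. $t_{n+1}\le K' n\,t_n$. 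I would try to get this from Lemma~\ref{lem_boundFT} together with the recurrence \eqref{eq:t_and_f_recurrence}, or more simply by a combinatorial injection: removing a cut piece of bounded size, using Theorem~\ref{thm:sum}. Failing that, I would use the cruder bound $t_{n}\le ab^n n!$ only at the cost of exponential factors, which would not suffice. Obtaining a ratio bound of the form $\widehat t_{n+1}/\widehat t_n=O(1)$ is therefore the main obstacle. Granting it, the first term is $O(n^{-5/2})\widehat t_{n+7}=O(n^{-3/2})\,y_n$, because $y_n\ge \tfrac{5}{2n}\widehat t_{n+7}$. The forcing term is $|\Rvec^{(j)}_{n+1}\Fvec_{n+1}|=\widehat f_{n+9}$. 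Lemma~\ref{lem_boundFT} gives $f_{n+9}\le c\,t_{n+6}$, hence $\widehat f_{n+9}\le c\,\widehat t_{n+6}\,\tfrac{(n+5)!}{(n+8)!}=O(n^{-3})\widehat t_{n+7}=O(n^{-2})y_n$. Altogether the left side of \eqref{eq:yx_inequality0_general} is $O(n^{-3/2})y_n$, which is $<n^{-1-2\varepsilon}y_n$ for $\varepsilon=1/8$ and $n$ large. Here positivity of $y_n$ uses $\widehat t_n>0$.

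Theorem~\ref{thm:dichotomy_general} now gives the dichotomy. In case (b), $y_n$ grows at most polynomially, and hence so does $\widehat t_{n+7}\le \tfrac{2n}{5}y_n$, and therefore $\widehat t_n$. In case (a), $y_n\sim c\,e^{H(n)}n^{\gamma}$, where by the Remark after Lemma~\ref{lem:log_deriv} with $a=\sqrt3$ and $b=23/12$ we have $H(n)=2\sqrt3\sqrt n=\sqrt{12n}$ and $\gamma=23/12-3/2=5/12$. To transfer this to $\widehat t_n$, we note that $\widehat t_{n+k+1}/\widehat t_{n+k}=x^{(0)}$-ratios up to lower order. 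More concretely, since $\widehat t_{n+1}\approx\alpha^{(0)}_n\widehat t_n$, we get $\widehat t_{n+6}\sim\widehat t_{n+7}$ and the $n^{-2}$ term is negligible. This gives $y_n\sim\tfrac{6}{n}\widehat t_{n+7}$. I would justify the ratio $\widehat t_{n+1}/\widehat t_n\to1$ from the fact that, in case (a), $x^{(0)}_n\sim y_n$ dominates $x^{(1)}_n$ and $x^{(2)}_n$; inverting the $3\times3$ relation among the last coordinates expresses $\widehat t_{n+6}$ and $\widehat t_{n+7}$ through the $x^{(j)}$, or alternatively one can bootstrap from monotonicity and the already-established asymptotic of $y_n$. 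Hence $\widehat t_{n+7}\sim \tfrac{c}{6}e^{\sqrt{12n}}n^{17/12}$. Finally, shifting $n$ by $7$ changes $e^{\sqrt{12n}}$ by a factor $1+O(n^{-1/2})\to1$, which yields $\widehat t_n\sim c'e^{\sqrt{12n}}n^{17/12}$ with $c'>0$.
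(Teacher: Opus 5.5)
Your outline matches the paper's proof step for step: the same $x^{(j)}_n=\Rvec^{(j)}_n\Tvec_n$, $(\lambda^{(1)},\lambda^{(2)})=(-2,1)$ and $\varepsilon=1/8$, the same two error estimates, and the same $H,\gamma$. As written, though, it leaves two steps open.

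The first is the ratio bound $\widehat t_{n+8}=O(\widehat t_{n+7})$, which you explicitly grant. It is genuinely needed, because the error vector in \eqref{eq:RM_equation} has a nonzero last entry and so acts on $\widehat t_{n+8}$, while $y_n$ only involves $\widehat t_{n+5},\widehat t_{n+6},\widehat t_{n+7}$. It is not a real obstacle, however. The first route you list is the one that works, and it is the one the paper uses. By \eqref{eq:recthat} shifted by one, together with the monotonicity of $\widehat t_n$,
\[
\widehat t_{n+8}\le\Bigl(\sum_{j=1}^{9}|\widehat m_j(n-1)|\Bigr)\widehat t_{n+7}+\widehat f_{n+8}.
\]
Here $\widehat m_1\to3$, $\widehat m_2=-3$, $\widehat m_3\to1$ and $\widehat m_j\to0$ for $j\ge4$, so the sum tends to $7$. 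Moreover $\widehat f_{n+8}=O(n^{-3}\widehat t_{n+5})$ by Lemma~\ref{lem_boundFT}. Hence $\widehat t_{n+8}<8\,\widehat t_{n+7}$ for large $n$, and neither a combinatorial injection nor the bound $t_n\le ab^nn!$ is required.

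The second open step is the transfer from $y_n$ to $\widehat t_n$ in case (a), where your main justification does not work. Theorem~\ref{thm:dichotomy_general} gives the asymptotics of $y_n$ and $x^{(0)}_n$; it does not give $\widehat t_{n+1}\approx\alpha^{(0)}_n\widehat t_n$. Nor can the $x^{(j)}_n$ be inverted to isolate $\widehat t_{n+6}$ or $\widehat t_{n+7}$: they are three linear forms in the five unknowns $\widehat t_{n+4},\dots,\widehat t_{n+8}$, and their leading parts all equal $(1,-2,1)$. The ``alternative'' you mention is the right argument, and it is exactly the paper's. The coefficients of $y_n$ are positive with sum $(6n+503/24)/n^2$, so monotonicity of $\widehat t$ gives
\[
\frac{(n-7)^2}{6(n-7)+503/24}\,y_{n-7}\le\widehat t_n\le\frac{(n-5)^2}{6(n-5)+503/24}\,y_{n-5}.
\]
Both bounds are $\sim\frac{c}{6}e^{\sqrt{12n}}n^{17/12}$, because $y_{n-k}\sim y_n$ for fixed $k$. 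This yields the asymptotics directly, with $\widehat t_{n+1}/\widehat t_n\to1$ as a consequence rather than a prerequisite. With these two short arguments inserted, your proof is complete.
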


Later, in Lemma \ref{lem:l_growth}, we will show that in fact the polynomial case does not occur, completing the proof of Theorem \ref{thm:asymptotic_behaviour}.

\begin{proof}[Proof of Proposition \ref{prop:dichotomy applied to tn}]
We start by applying Theorem \ref{thm:dichotomy_general} using $\varepsilon = 1/8$, $(\alpha_n^{(j)})_{n \geqslant 1}$ and $(x_n^{(j)})_{n \geqslant 1}$ as defined above, and $y_{n}=x_{n}^{(0)}-2x_{1}^{(0)}+x_{n}^{(2)}$. To do this we need to prove that the sequences $(x_n^{(j)})_{n \geqslant 1}$ for $j \in \{0, 1, 2\}$ and $(y_n)_{n \geqslant 1}$ satisfy \[n^{-5/4}y_{n}>|x_{n+1}^{(j)}-\alpha_{n}^{(j)}x_{n}^{(j)}|,\]
for $n$ sufficiently large. 

Using \eqref{eq:RM_equation}, we can choose $N$ sufficiently large such that for $j\in\{0,1,2\}$ and all $n>N$ we have 
\begin{equation}
\label{eq:RM_upper_bound}
    \left|\Rvec^{(j)}_{n+1}\Mvec_{n}- \alpha_{n}^{(j)} \Rvec^{(j)}_{n}\right|<\frac{1}{4}n^{-9/4}.
\end{equation}
We may assume additionally that \begin{equation}\label{eq:mhatsum_upper_bound}
    |\widehat{m}_{1}(n)|+\cdots+|\widehat{m}_{9}(n)|<8,
\end{equation}
for $n>N$, since this sum converges to $7$ as $n\to\infty$.

Then by \eqref{eq:R_positive_linear_combination},
\begin{align*}
y_{n}&=(\Rvec^{(0)}_{n}-2\Rvec^{(1)}_{n}+\Rvec^{(2)}_{n})\Tvec_{n}\\ 
&=\frac{21}{2n^{2}}\widehat{t}_{n+5} + \left(\!\frac{7}{2n} + \frac{191}{24n^{2}}\!\right)\widehat{t}_{n+6} + \left(\!\frac{5}{2n} + \frac{5}{2n^{2}}\!\right)\widehat{t}_{n+7}\\ &>\frac{7}{2n}\widehat{t}_{n+6}+\frac{5}{2n}\widehat{t}_{n+7}.
\end{align*}

Now observe that, since $t_n \geqslant nt_{n-1}$, we have $\widehat{t}_n > \widehat{t}_{n-1}$ (see Section~\ref{sec:recurrences}). Therefore we have $\widehat{t}_{n}< \cdots < \widehat{t}_{n+7}$, and then
from \eqref{eq:mhatsum_upper_bound} we have $\widehat{t}_{n+8}< 8\widehat{t}_{n+7}$. Together, this implies
\begin{equation}
\label{eq:y_lower_bound}
y_{n}>\frac{7}{2n}\widehat{t}_{n+6}+\frac{5}{2n}\widehat{t}_{n+7}>\frac{1}{2n}\left(\widehat{t}_{n}+ \cdots+ \widehat{t}_{n+6}\right)+\frac{5}{18n} \left(\widehat{t}_{n+7}+ \widehat{t}_{n+8}\right) > \frac{5}{18n}  |\Tvec_{n}|.
\end{equation}

Using \eqref{eq:y_lower_bound}, \eqref{eq:RM_upper_bound}, \eqref{eq:tlhat_matrix_recurrence}, and the fact that $\Rvec_{n+1}^{(j)} \Fvec_{n+1} = \widehat{f}_{n+9}$ by definition, we can write for $n$ large enough:
\begin{equation}
\label{eq:jpp}
n^{-5/4}y_{n}> \frac{5}{18}n^{-9/4}|\Tvec_n|\geqslant \frac{20}{18} \left|\left(\Rvec^{(j)}_{n+1}\Mvec_{n}- \alpha_{n}^{(j)} \Rvec^{(j)}_{n}\right)\Tvec_{n}\right| = \frac{10}{9}\left|x_{n+1}^{(j)}-\alpha_{n}^{(j)}x_{n}^{(j)}-\widehat{f}_{n+9}\right|.
\end{equation}

Moreover, using Lemma \ref{lem_boundFT} and \eqref{eq:y_lower_bound}, we have $\widehat{f}_{n+9}=O(\widehat{t}_{n+6}/n^3)=O(y_{n}/n^2)$. Hence, for $n$ large enough, $\frac{1}{10} n^{-5/4} y_n \geqslant |\widehat{f}_{n+9}|$. Hence, using \eqref{eq:jpp}, we have for $n$ large enough:
\begin{align*}
n^{-5/4} y_n &= \frac{9}{10}n^{-5/4} y_n + \frac{1}{10} n^{-5/4} y_n\\
& \geqslant |x_{n+1}^{(j)}-\alpha_{n}^{(j)}x_{n}^{(j)}-\widehat{f}_{n+9}| + |\widehat{f}_{n+9}|\\
&\geqslant |x_{n+1}^{(j)}-\alpha_{n}^{(j)}x_{n}^{(j)}|,
\end{align*}
as required. Finally this implies that we can apply Theorem \ref{thm:dichotomy_general}. 

\hfill

We now know that either $(y_n)_{n\geqslant 1}$ grows at most polynomially or $y_{n}\sim ce^{H(n)}n^\gamma$ for some constant $c>0$ where, by definition of $\alpha_n^{(0)}$, we have $H(n)=2\sqrt{3n}$ and $\gamma=23/12-3/2 = 5/12$. So $(ny_n)_{n\geqslant 1}$ satisfies the dichotomy we announced for $(\widehat{t}_n)_{n\geqslant 1}$. All that remains is to prove that $(\widehat{t}_n)_{n\geqslant 1}$ and $(ny_n)_{n\geqslant 1}$ satisfy the same dichotomy.

Recall that by definition
\begin{equation}
\label{eq:jsplol}
y_{n}=\frac{21}{2n^{2}}\widehat{t}_{n+5}+\left(\frac{7}{2n}+\frac{191}{24n^{2}}\right)\widehat{t}_{n+6}+\left(\frac{5}{2n}+\frac{5}{2n^{2}}\right)\widehat{t}_{n+7}.
\end{equation}
Hence if $y_n$ grows at most polynomially, so does $\widehat{t}_n$, as both sequences are positive. 
Assume now that $y_n \sim c e^{\sqrt{12 n}} n^{5/12}$ for some $c>0$, as $n \rightarrow \infty$. 
Then, since $\widehat{t}_{n}$ is an increasing sequence, we get from \eqref{eq:jsplol} that
\[\frac{(n-5)^2}{6(n-5)+503/24}y_{n-5}\geqslant\widehat{t}_{n}\geqslant \frac{(n-7)^2}{6(n-7)+503/24}y_{n-7}.\]
The upper and lower bounds are both asymptotically equivalent to $\frac{c}{6} e^{\sqrt{12 n}}n^{17/12}$, so $\widehat{t}_{n}$ also has this behaviour, as claimed.
\end{proof}

\subsection{Superpolynomial growth of \texorpdfstring{$(\widehat{t}_n)_{n \geqslant 0}$}{tnhat}}
\label{subsec:ruleout}

All that remains is to prove that the sequence $(\widehat{t}_n)_{n \geqslant 0}$ grows superpolynomially fast. To do this, we introduce a sequence of lower bounds $\widehat{\ell}_n$ for the numbers $\widehat{t}_n$ with the same asymptotic behaviour. 

More precisely, we consider the sequence $(\ell_n)_{n\geqslant0}$ where $\ell_{n}$ is the number of bases of size $n$ whose $3$-core has size $N$, for some $N$ to be specified later. 
It is clear from this definition that $t_{n}\geqslant \ell_{n}$. 
Additionally $\ell_{0}=\cdots=\ell_{N-1}=0$ and $\ell_{N}=f_{3,N}>0$. Moreover,  from \eqref{eq:inclus-exclus_k_m}, for $n\geqslant N$, $\ell_{n}$ is determined by the recurrence
\begin{equation}
\label{eq:l_recurrence}
\ell_{n+9}=m_{1}(n)\ell_{n+8}+m_{2}(n)\ell_{n+7}+m_{3}(n)\ell_{n+6}+\cdots+m_{8}(n)\ell_{n+1}+m_{9}(n)\ell_{n},
\end{equation}
with the same $m_i(n)$ as in \eqref{eq:t_and_f_recurrence}.
Observe that this is the same recurrence as for $t_n$ except that the $f_{n+9}$ is missing. This follows from the fact that the difference between \eqref{eq:inclus-exclus_k_m} and \eqref{eq:inclus-exclus_Fk} is precisely the term $F(z)$. 

Analogously to the notation used for $(t_n)_n$, we define   $\widehat{\ell_n} = \frac{\ell_n}{(n-1)!}$ and 
%
\[\Lvec_{n}:=\left(\begin{array}{c}
\widehat{\ell}_{n}\\
\widehat{\ell}_{n+1}\\
\vdots\\
\widehat{\ell}_{n+6}\\
\widehat{\ell}_{n+7}\\
\widehat{\ell}_{n+8}
\end{array}
\right).\]

Then \eqref{eq:l_recurrence} can be written as 
\[\Mvec_{n}\Lvec_{n}=\Lvec_{n+1}.\]

We can now apply Theorem~\ref{thm:dichotomy_general} to the sequence $x_{n}^{(j)}=\Rvec^{(j)}_{n}\Lvec_{n}$, with the same $(\Rvec_n^{(j)})_{n \geqslant 1}$ as before, to obtain the asymptotic behaviour of $\widehat{\ell}_n$.

\begin{lemma}
\label{lem:l_growth}
There is a constant $N$ such that the sequence $\ell_n$ of bases of size $n$ with $3$-core of size $N$ satisfies
\[\ell_{n}\sim c n!e^{\sqrt{12 n}}n^{5/12},\]
for some constant $c>0$.
\end{lemma}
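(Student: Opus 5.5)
We apply Theorem~\ref{thm:dichotomy_general} to $x_n^{(j)}=\Rvec^{(j)}_n\Lvec_n$ for $j\in\{0,1,2\}$, with the same $\alpha_n^{(j)}$, $\lambda_n^{(1)}=-2$, $\lambda_n^{(2)}=1$, and $y_n=(\Rvec^{(0)}_n-2\Rvec^{(1)}_n+\Rvec^{(2)}_n)\Lvec_n$. Then we show that alternative (a) holds by exhibiting an $M$ with $x_M^{(0)}\geqslant\max(|x_M^{(1)}|,|x_M^{(2)}|)$. The choice of $N$ (the size of the 3-core) is exactly what gives us the freedom to find such an $M$.

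\textbf{Step 1: hypotheses of Theorem~\ref{thm:dichotomy_general}.}
(A1) holds with $c_1=\sqrt3>0$ and $r_1=1/2$. (A2) holds since $\alpha_n^{(1)}=1+\frac{2}{3n}$ and $0\leqslant\alpha_n^{(2)}\leqslant 1+\frac{23}{12n}$ for $n$ large. The verification of \eqref{eq:yx_inequality0_general} copies the proof of Proposition~\ref{prop:dichotomy applied to tn}, and is simpler because there is no $\widehat f$ term: $\Lvec_{n+1}=\Mvec_n\Lvec_n$ exactly.

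The one ingredient to recheck is monotonicity, used in \eqref{eq:y_lower_bound}. We need $\widehat\ell_{n}\geqslant\widehat\ell_{n-1}$, i.e.\ $\ell_n\geqslant (n-1)\ell_{n-1}$. This follows by adding a single point as a horizontal sum with a basis of size $1$: this does not change the 3-core, by Lemma~\ref{lem:core}(c), and gives $n$ distinct bases, so in fact $\ell_n\geqslant n\ell_{n-1}$ for $n>N$. Strictness may fail at the first few indices, but we only need non-strict inequalities with slightly adjusted constants. Also $\ell_n>0$ for $n\geqslant N$, so $|\Lvec_n|>0$ and $y_n>0$.

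The threshold $N_0$ in the theorem, and the $n$ beyond which \eqref{eq:RM_upper_bound} and \eqref{eq:mhatsum_upper_bound} hold, depend only on the $\alpha$'s and $\Mvec_n$, not on $N$. So we fix such a threshold $N_1$ first, and then choose the 3-core size $N> N_1$.

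\textbf{Step 2: forcing case (a).}
Take $M=N$. Then $\Lvec_N=(\widehat\ell_N,\widehat\ell_{N+1},\ldots,\widehat\ell_{N+8})$ and $\widehat\ell_{N+k}=\ell_{N+k}/(N+k-1)!$. We compute these terms from the recurrence \eqref{eq:l_recurrence} started at $\ell_N=f_{3,N}$, with zeros below index $N$. For $n+9\in\intval{N,N+8}$ the recurrence involves only the initial zeros and earlier terms, so every $\ell_{N+k}$ is $f_{3,N}$ times an explicit rational function of $N$. Hence $x_N^{(j)}/\widehat\ell_N$ are explicit functions of $N$, and we compare their leading asymptotics as $N\to\infty$.

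The last coordinate of each $\Rvec^{(j)}$ is $1$, and $\widehat\ell_{N+8}$ is the dominant entry. Moreover $\Rvec^{(0)}$ and $\Rvec^{(2)}$ differ only by the sign of $\sqrt3$, and $\Rvec^{(1)}$ has no $\sqrt{\cdot}$ terms. So the comparison reduces to an expansion: writing $\widehat\ell_{N+k}=\widehat\ell_{N+8}(1+O(1/N))$ after normalisation, we get
\begin{equation*}
x_N^{(0)}-x_N^{(2)}\approx -2\sqrt3\,N^{-1/2}\bigl(\widehat\ell_{N+6}-\widehat\ell_{N+7}\bigr)+\cdots .
\end{equation*}
Its sign is governed by how the ratios $\widehat\ell_{N+k+1}/\widehat\ell_{N+k}$ compare with $\alpha^{(j)}$ near the start. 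Starting from a single seed, the early ratios $\widehat\ell_{N+k+1}/\widehat\ell_{N+k}$ should be large, of order $N^{0}$ times constants bigger than $1$. For example, $\ell_{N+1}\approx (3(N+1)+\cdots)\ell_N$ gives $\widehat\ell_{N+1}/\widehat\ell_N\approx 3$. This should make $x_N^{(0)}$ positive and dominating.

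\textbf{Main obstacle.}
The crux is Step 2: actually verifying $x_N^{(0)}\geqslant |x_N^{(1)}|,|x_N^{(2)}|$ for some large $N$. I would first try a symbolic computation (as in the SageMath notebook) of the expansions of $x_N^{(j)}/\widehat\ell_{N+8}$ in powers of $N^{-1/2}$, checking the sign of the leading differing terms. If that is inconclusive for $M=N$, I would take $M=N+c$ for a fixed small $c$, iterating the exact recurrence a few more steps. Alternatively, I would evaluate numerically for one explicit large $N$, with $N\geqslant N_1$ made explicit.

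Once case (a) holds, $y_n\sim c'e^{\sqrt{12n}}n^{5/12}$. The sandwich argument at the end of the proof of Proposition~\ref{prop:dichotomy applied to tn} then gives $\widehat\ell_n\sim\frac{c'}{6}e^{\sqrt{12n}}n^{17/12}$. Multiplying by $(n-1)!$ yields $\ell_n\sim c\,n!\,e^{\sqrt{12n}}n^{5/12}$. Since $t_n\geqslant\ell_n$, this also rules out polynomial growth of $\widehat t_n$.
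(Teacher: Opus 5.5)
Your Step~1 is sound, and your check that $\ell_n\geqslant n\ell_{n-1}$ via Lemma~\ref{lem:core}(c) makes explicit what the paper covers by ``the exact same arguments''. The gap is Step~2, the only new ingredient of the lemma, which is not proved. You leave $x_M^{(0)}\geqslant\max_j|x_M^{(j)}|$ as something that ``should'' hold and defer it to a symbolic or numerical check.

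The expansion you sketch also rests on a false premise, which contradicts your own observation $\widehat\ell_{N+1}/\widehat\ell_N\approx 3$. As $n\to\infty$ one has $\widehat m_1\to3$, $\widehat m_2\to-3$, $\widehat m_3\to1$ and $\widehat m_j\to0$ for $j\geqslant4$. With the zero initial values this gives $\widehat\ell_{N+k}/\widehat\ell_N\to\binom{k+2}{2}$, which is far from $\widehat\ell_{N+k}=\widehat\ell_{N+8}(1+O(1/N))$. With these limits your route at $M=N$ can be completed: $x_N^{(j)}/\widehat\ell_N\to 28-72+45=1$ for every $j$, while $x_N^{(0)}-x_N^{(1)}\sim 8\sqrt3\,N^{-1/2}\widehat\ell_N$ and $x_N^{(0)}-x_N^{(2)}\sim 16\sqrt3\,N^{-1/2}\widehat\ell_N$. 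But that computation is exactly what your proposal is missing.

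The idea you miss is that the zeros $\ell_{N-1}=\ell_{N-2}=\cdots=0$ let you test the criterion of case (a) at an index $M<N$, where nothing needs to be computed. The paper takes $M=N-8$:
\begin{itemize}
\item the recursion $\Lvec_{n+1}=\Mvec_n\Lvec_n$ holds from $n=N-8$ on, and $\Lvec_{N-8}=(0,\ldots,0,\widehat\ell_N)^T$;
\item every $\Rvec^{(j)}_n$ has last entry $1$, so $x^{(0)}_{N-8}=x^{(1)}_{N-8}=x^{(2)}_{N-8}=\widehat\ell_N>0$ and case (a) applies with equality;
\item $N$ is chosen so that \eqref{eq:RM_upper_bound} and \eqref{eq:mhatsum_upper_bound} already hold at $N-8$.
\end{itemize}

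If you follow this literally, note that $y_{N-8}=0$, because the combination \eqref{eq:R_positive_linear_combination} only involves $\widehat\ell_{n+5},\widehat\ell_{n+6},\widehat\ell_{n+7}$. So the strict inequality \eqref{eq:yx_inequality0_general} fails at $n=N-8$. This is harmless for two reasons, either of which suffices:
\begin{itemize}
\item The induction in case (a) only uses $|x^{(j)}_{n+1}-\alpha^{(j)}_nx^{(j)}_n|\leqslant\Gamma n^{-1-2\varepsilon}x^{(0)}_n$, which \eqref{eq:RM_upper_bound} provides at $n=N-8$.
\item You can start at $M=N-7$ instead. There $\widehat\ell_{N+1}=\frac{3(N+1)}{N}\widehat\ell_N$ gives $x^{(0)}_{N-7}=\widehat\ell_N(1+\sqrt3N^{-1/2}+O(N^{-1}))$, $x^{(1)}_{N-7}=\widehat\ell_N(1+O(N^{-1}))$ and $x^{(2)}_{N-7}=\widehat\ell_N(1-\sqrt3N^{-1/2}+O(N^{-1}))$. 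Moreover $y_n>0$ for all $n\geqslant N-7$, so Theorem~\ref{thm:dichotomy_general} applies verbatim.
\end{itemize}
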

\begin{proof}
Let $N$ be large enough so that Theorem~\ref{thm:dichotomy_general} applies for $\varepsilon =1/8$ and the same $\alpha_n^{(j)}$ as in Proposition \ref{prop:dichotomy applied to tn}, and such that \eqref{eq:RM_upper_bound} and \eqref{eq:mhatsum_upper_bound} hold for $N-8$.

For each $j$, write $x_{n}^{(j)}=\Rvec^{(j)}_{n}\Lvec_{n}$, and $y_{n}=x_{n}^{(0)}-2x_{n}^{(1)}+x_{n}^{(2)}$. With the exact same arguments as in Proposition \ref{prop:dichotomy applied to tn}, it holds that, for the same $\alpha_n^{(j)}$, for each $j \in \{0,1,2\}$
\[n^{-5/4}y_{n}>|x_{n+1}^{(j)}-\alpha_{n}^{(j)}x_{n}^{(j)}|\]
for $n$ large enough, allowing us to use Theorem \ref{thm:dichotomy_general}.

Now observe that by definition, for each $j \in \{0,1,2\}$ we have
\[x_{N-8}^{(j)}=\Rvec^{(j)}_{N-8}\Lvec_{N-8}=\widehat{\ell}_{N},\]
so
\[x_{N-8}^{(0)}=x_{N-8}^{(1)}=x_{N-8}^{(2)}.\]
Hence, the first case of Theorem \ref{thm:dichotomy_general} holds, which implies that
\[y_{n}\sim x_{n}^{(0)}\sim c e^{\sqrt{12 n}}n^{5/12},\]
for some constant $c>0$.

Finally, using the definition of $y_n$ in the same way as in Proposition~\ref{prop:dichotomy applied to tn}, we obtain
\[\ell_{n}\sim \frac{c}{6}n! e^{\sqrt{12 n}}n^{5/12},\]
as claimed.

\end{proof}

We finally have all the ingredients to prove Theorem \ref{thm:asymptotic_behaviour}.

\begin{proof}[Proof of Theorem~\ref{thm:asymptotic_behaviour}]
From Lemma \ref{lem:l_growth}, the sequence $(\widehat{\ell}_n)_{n\geqslant0}$ grows superpolynomially. Since $\widehat{t}_n\geqslant\widehat{\ell}_n$ for all $n$, Proposition \ref{prop:dichotomy applied to tn} implies that
\[\widehat{t}_n \sim c e^{\sqrt{12 n}}n^{17/12},\]
for some constant $c>0$. Finally, this implies
\[t_{n} \sim c n! e^{\sqrt{12 n}}n^{5/12},\]
as claimed.
\end{proof}

\section{Discussions}

We list here several projects and open questions, which we think are promising.

\subsection{Asymptotic structure of a triangle basis chosen uniformly at random}
Asymptotic enumeration of discrete objects is often used to understand the limiting behaviour of these objects as their size approaches infinity. We therefore pose the natural question of understanding the structure of large random triangle bases. 

As a first step, by Theorem \ref{thm:asymptotic_behaviour}, the proportion of bases of size $n$ that have only one point on their bottom line is 
\[\frac{nt_{n-1}}{t_n} \underset{n\to\infty}{\to} 1.\]
So with high probability, a large uniform random basis has only one point on its bottom line, which is distributed uniformly at random on that line. In addition, by rotational symmetry, with high probability it has exactly one (uniformly distributed) point on each of its sides. 

Hence, one would expect a large uniform basis to have a higher density of points closer to the middle of the triangle, and a lower density close to the sides. More generally, it would be interesting to compute the expected number of points of a uniformly random large basis that fall into a given region of the triangle.

Another interesting question to look at is the expected size of the $k$-core of a large uniform random basis. We expect it to be small for $k \geqslant 2$, possibly of constant ordre.


\smallskip

A similar question of interest concerns the asymptotic structure of a uniformly random element in the class $Av_n((12,12), (231, 312))$ of pattern avoiding $3$-permutations. Several models of permutations taken uniformly at random in a pattern avoiding class have been shown to converge to some limit object called a \emph{permuton}, which is a probability measure on the unit square, see for instance \cite{permuton_bassino}.
One can therefore also wonder if a uniform $3$-permutation of size $n$ in the class $Av((12,12), (231, 312))$ converges to a $3$-dimensional permuton, as defined in \cite{BoLi25}. Again our asymptotic results should be an important stepping stone to answering this question.

\subsection{Related models}
There is a model of bootstrap percolation on the square lattice, known as \emph{Froböse percolation}, analogous to ours (see \cite{HT24, SS25}). In that model, one can add a point $x$ to a configuration $\eta$ if there exists a $2$ by $2$ square $\{x,y,z,t\}$ such that $y,z,t \in \eta$. The same way as in our model, one can define bases of Froböse percolation on finite rectangles as minimal configurations that percolate to a given rectangle. In a private communication, Hartarsky conjectured that the numbers of bases of Froböse percolation on a finite-size square also have a stretched exponential factor in their asymptotics, which is supported by simulations. However, this model is naturally bivariate, with one variable accounting for the length, and the other one for the width of the rectangle, which makes the asymptotic study more intricate.

In whole generality, there are also many more models of pattern avoiding permutations or higher dimensional permutations that one could study. A long-term goal would be to understand which such models have a stretched exponential behaviour and which do not. 

\subsection{Generality of the method}
We hope that our method, which seems fairly robust, and in particular the technical Theorem~\ref{thm:dichotomy_general}, may be used as a blackbox to rigorously show that stretched exponential asymptotic behaviour appears in combinatorial sequences. A main obstacle that remains in many cases where a stretched exponential is conjectured is the lack of a recursive decomposition and the associated equations, in particular in the field of pattern avoiding permutations. 

We also hope that our method can be extended to provide a nice constructive proof that the coefficients of any D-finite generating function whose associated equation has an irregular dominant singularity has a stretched exponential asymptotic behaviour.

Another potential extension of the method is to quasi-linear \emph{bivariate} recurrences. A method for deducing asymptotics for bivariate \emph{linear} recurrences has recently been developed \cite{EPFW21, EFLW25}, which also yields conjectural results for some bivariate non-linear recurrences (see \cite[Conjecture 25]{EFLW25}). We hope that some of the ideas we used to replace a non-linear recurrence with a quasi-linear recurrence will also apply to these bivariate recurrences. In particular, we will need to extend our method in this way in order to address asymptotics of the number of bases of Froböse percolation. 

\section{Acknowledgements}

The authors would like to thank Alin Bostan, Mireille Bousquet-Mélou, Michèle Loday-Richaud and Marc Mezzarobba for fruitful discussions about singularity analysis of D-finite functions, as well as Ivailo Hartarsky for the initial motivation. 

\hfill

Andrew Elvey Price was partially supported by ANR HighGG (ANR-24-CE40-2078-01).

Paul Thévenin was partially supported by the project ANR-24-CPJ1-0032-01, and the
project "Rawabranch" number ANR-23-CE40-0008.

\bibliographystyle{alpha}
\bibliography{biblioBases}

@book {Wasow,
    AUTHOR = {Wasow, Wolfgang},
     TITLE = {Asymptotic expansions for ordinary differential equations},
      NOTE = {Reprint of the 1965 edition},
 PUBLISHER = {Robert E. Krieger Publishing Co., Huntington, NY},
      YEAR = {1976},
     PAGES = {ix+374},
      ISBN = {0-88275-173-5},
   MRCLASS = {34EXX},
  MRNUMBER = {460820},
}

@incollection {SS23,
    AUTHOR = {Salo, Ville and Schabanel, Juliette},
     TITLE = {Triangle solitaire},
 BOOKTITLE = {Cellular automata and discrete complex systems},
    VOLUME = {14152},
     PAGES = {123--136},
 PUBLISHER = {Springer, Cham},
      YEAR = {2023},
       DOI = {10.1007/978-3-031-42250-8\_9},
}

@article {CG15,
    AUTHOR = {Conway, Andrew R. and Guttmann, Anthony J.},
     TITLE = {On 1324-avoiding permutations},
   JOURNAL = {Adv. in Appl. Math.},
  FJOURNAL = {Advances in Applied Mathematics},
    VOLUME = {64},
      YEAR = {2015},
     PAGES = {50--69},
      ISSN = {0196-8858,1090-2074},
   MRCLASS = {05A05 (05A15 05A16)},
  MRNUMBER = {3300327},
MRREVIEWER = {Antonio\ Bernini},
       DOI = {10.1016/j.aam.2014.12.004},
       URL = {https://doi.org/10.1016/j.aam.2014.12.004},
}

@article {CGZJ18,
    AUTHOR = {Conway, Andrew R. and Guttmann, Anthony J. and {Zinn-Justin},
              Paul},
     TITLE = {1324-avoiding permutations revisited},
   JOURNAL = {Adv. in Appl. Math.},
  FJOURNAL = {Advances in Applied Mathematics},
    VOLUME = {96},
      YEAR = {2018},
     PAGES = {312--333},
      ISSN = {0196-8858,1090-2074},
   MRCLASS = {05A05 (05A15 05A16)},
  MRNUMBER = {3767512},
MRREVIEWER = {David\ Bevan},
       DOI = {10.1016/j.aam.2018.01.002},
       URL = {https://doi.org/10.1016/j.aam.2018.01.002},
}

@article {BW21,
    AUTHOR = {Banderier, Cyril and Wallner, Michael},
     TITLE = {Young tableaux with periodic walls: counting with the density
              method},
   JOURNAL = {S\'em. Lothar. Combin.},
  FJOURNAL = {S\'eminaire Lotharingien de Combinatoire},
    VOLUME = {85B},
      YEAR = {2021},
     PAGES = {Art. 47, 12},
      ISSN = {1286-4889},
   MRCLASS = {05A15 (05A19)},
  MRNUMBER = {4311928},
}

@article {FYZ21,
    AUTHOR = {Fuchs, Michael and Yu, Guan-Ru and Zhang, Louxin},
     TITLE = {On the asymptotic growth of the number of tree-child networks},
   JOURNAL = {European J. Combin.},
  FJOURNAL = {European Journal of Combinatorics},
    VOLUME = {93},
      YEAR = {2021},
     PAGES = {Paper No. 103278, 20},
      ISSN = {0195-6698,1095-9971},
   MRCLASS = {05C30 (05C05 92D15)},
  MRNUMBER = {4186619},
MRREVIEWER = {Shouliu\ Wei},
       DOI = {10.1016/j.ejc.2020.103278},
       URL = {https://doi.org/10.1016/j.ejc.2020.103278},
}

@article {EPG19,
    AUTHOR = {{Elvey Price}, Andrew and Guttmann, Anthony J.},
     TITLE = {Numerical studies of {T}hompson's group {$F$} and related
              groups},
   JOURNAL = {Internat. J. Algebra Comput.},
  FJOURNAL = {International Journal of Algebra and Computation},
    VOLUME = {29},
      YEAR = {2019},
    NUMBER = {2},
     PAGES = {179--243},
      ISSN = {0218-1967,1793-6500},
   MRCLASS = {20F65 (05A05 05A15 20F05)},
  MRNUMBER = {3934784},
MRREVIEWER = {J\"org\ Lehnert},
       DOI = {10.1142/S0218196718500686},
       URL = {https://doi.org/10.1142/S0218196718500686},
}

@book {FS09,
    AUTHOR = {Flajolet, Philippe and Sedgewick, Robert},
     TITLE = {Analytic combinatorics},
 PUBLISHER = {Cambridge University Press, Cambridge},
      YEAR = {2009},
     PAGES = {xiv+810},
      ISBN = {978-0-521-89806-5},
   MRCLASS = {05-02 (05A15 05A16 60C05 60E10 82-01)},
  MRNUMBER = {2483235},
       DOI = {10.1017/CBO9780511801655},
       URL = {https://doi.org/10.1017/CBO9780511801655},
}

@article {SS25,
    AUTHOR = {Salo, Ville and Schabanel, Juliette},
     TITLE = {Solitaire of independence},
   JOURNAL = {Nat. Comput.},
  FJOURNAL = {Natural Computing. An International Journal},
    VOLUME = {24},
      YEAR = {2025},
    NUMBER = {3},
     PAGES = {431--467},
      ISSN = {1567-7818,1572-9796},
   MRCLASS = {99-03},
  MRNUMBER = {4957084},
       DOI = {10.1007/s11047-025-10010-3},
       URL = {https://doi.org/10.1007/s11047-025-10010-3},
}

@article{HT24,
  title={Bootstrap percolation is local},
  author={Hartarsky, Ivailo and Teixeira, Augusto},
  journal={ArXiv preprint arXiv:2404.07903},
  year={2024}
}

@article{EFLW25,
  title={Bivariate asymptotics via random walks: application to large genus maps},
  author={{Elvey Price}, Andrew and Fang, Wenjie and Louf, Baptiste and Wallner, Michael},
  journal={arXiv preprint arXiv:2506.06924},
  year={2025}
}

@article {Rev03,
    AUTHOR = {Revelle, David},
     TITLE = {Heat kernel asymptotics on the lamplighter group},
   JOURNAL = {Electron. Comm. Probab.},
  FJOURNAL = {Electronic Communications in Probability},
    VOLUME = {8},
      YEAR = {2003},
     PAGES = {142--154},
      ISSN = {1083-589X},
   MRCLASS = {60G50 (60J10)},
  MRNUMBER = {2042753},
MRREVIEWER = {Donald\ I.\ Cartwright},
       DOI = {10.1214/ECP.v8-1092},
       URL = {https://doi.org/10.1214/ECP.v8-1092},
}

@article {EPFW21,
    AUTHOR = {{Elvey Price}, Andrew and Fang, Wenjie and Wallner, Michael},
     TITLE = {Compacted binary trees admit a stretched exponential},
   JOURNAL = {J. Combin. Theory Ser. A},
  FJOURNAL = {Journal of Combinatorial Theory. Series A},
    VOLUME = {177},
      YEAR = {2021},
     PAGES = {Paper No. 105306, 40},
      ISSN = {0097-3165,1096-0899},
   MRCLASS = {05A15 (05A16 68Q45)},
  MRNUMBER = {4143528},
MRREVIEWER = {David\ Bevan},
       DOI = {10.1016/j.jcta.2020.105306},
       URL = {https://doi.org/10.1016/j.jcta.2020.105306},
}

@incollection {EPFW20,
    AUTHOR = {{Elvey Price}, Andrew and Fang, Wenjie and Wallner, Michael},
     TITLE = {Asymptotics of minimal deterministic finite automata
              recognizing a finite binary language},
 BOOKTITLE = {31st {I}nternational {C}onference on {P}robabilistic,
              {C}ombinatorial and {A}symptotic {M}ethods for the {A}nalysis
              of {A}lgorithms},
    SERIES = {LIPIcs. Leibniz Int. Proc. Inform.},
    VOLUME = {159},
     PAGES = {Art. No. 11, 13},
 PUBLISHER = {Schloss Dagstuhl. Leibniz-Zent. Inform., Wadern},
      YEAR = {2020},
      ISBN = {978-3-95977-147-4},
   MRCLASS = {68Q45},
  MRNUMBER = {4116404},
}

@article{BijPermut,
      title={3{D} permutations and triangle solitaire}, 
      author={Juliette Schabanel},
      year={2025},
      journal={ArXiv preprint arXiv:2502.14657},
      url={https://arxiv.org/abs/2502.14657}, 
}

@article {Fro89,
    AUTHOR = {Frob\"ose, K.},
     TITLE = {Finite-size effects in a cellular automaton for diffusion},
   JOURNAL = {J. Statist. Phys.},
  FJOURNAL = {Journal of Statistical Physics},
    VOLUME = {55},
      YEAR = {1989},
    NUMBER = {5-6},
     PAGES = {1285--1292},
      ISSN = {0022-4715,1572-9613},
   MRCLASS = {82-08 (82A43)},
  MRNUMBER = {1002492},
       DOI = {10.1007/BF01041088},
       URL = {https://doi.org/10.1007/BF01041088},
}

@book{Stanley2,
  title = {Enumerative {{Combinatorics}}},
  author = {Stanley, Richard P.},
  year = 2011,
  series = {Cambridge {{Studies}} in {{Advanced Mathematics}}},
  edition = {2},
  volume = {2},
  publisher = {Cambridge University Press},
  doi = {10.1017/CBO9780511609589}
}

@article{Bonichon_d-permutations,
  title={Baxter d-Permutations and Other Pattern-Avoiding Classes},
  author={Bonichon, Nicolas and Morel, Pierre-Jean},
  journal={Journal of Integer Sequences},
  volume={25},
  number={8},
  pages={22--8},
  year={2022}
}

@article{Gunby_d-permutations,
  title={Asymptotics of pattern avoidance in the Klazar set partition and permutation-tuple settings},
  author={Gunby, Benjamin and P{\'a}lv{\"o}lgyi, D{\"o}m{\"o}t{\"o}r},
  journal={European Journal of Combinatorics},
  volume={82},
  pages={102992},
  year={2019},
  publisher={Elsevier}
}

@Article{permuton_bassino,
 Author = {Bassino, Fr{\'e}d{\'e}rique and Bouvel, Mathilde and F{\'e}ray, Valentin and Gerin, Lucas and Maazoun, Micka{\"e}l and Pierrot, Adeline},
 Title = {Universal limits of substitution-closed permutation classes},
 FJournal = {Journal of the European Mathematical Society (JEMS)},
 Journal = {J. Eur. Math. Soc.},
 Volume = {22},
 Number = {11},
 Year = {2020},
 Pages = {3565--3639},
 Language = {English},
 DOI = {10.4171/JEMS/993},
 zbMATH = {7286839},
 Zbl = {1469.60039}
}

@article{BoLi25,
      title={High-dimensional permutons: theory and applications}, 
      author={Jacopo Borga and Andrew Lin},
      year={2025},
      journal={ArXiv preprint arXiv:2412.19730},
      url={https://arxiv.org/abs/2412.19730},
}

@article{ALW1995,
  title={Priority queues and multisets},
  author={Atkinson, Mike D. and Linton, Stephen A. and Walker, Louise A.},
  journal={the electronic journal of combinatorics},
  pages={R24--R24},
  year={1995}
}

@article{aldred2005permuting,
  title={Permuting machines and priority queues},
  author={Aldred, Robert E.L. and Atkinson, Mike D. and van Ditmarsch, Hans P. and Handley, Chris C. and Holton, Derek A. and McCaughan, Dennis J.},
  journal={Theoretical computer science},
  volume={349},
  number={3},
  pages={309--317},
  year={2005},
  publisher={Elsevier}
}

\end{document}